\documentclass[11pt]{amsart}
\usepackage[T1]{fontenc}
\usepackage{lmodern,microtype}
\usepackage{amsmath,amssymb,amsthm,mathtools}
\usepackage[margin=1in]{geometry}
\usepackage{booktabs,enumitem}
\usepackage{xcolor}
\usepackage[colorlinks=true,linkcolor=blue!45!black,citecolor=blue!45!black,urlcolor=blue!45!black]{hyperref}

\newtheorem{theorem}{Theorem}[section]
\newtheorem{proposition}[theorem]{Proposition}
\newtheorem{lemma}[theorem]{Lemma}
\newtheorem{corollary}[theorem]{Corollary}
\newtheorem{hypothesis}[theorem]{Hypothesis}

\newtheorem*{maintheorem}{Main Theorem}
\theoremstyle{definition}

\theoremstyle{remark}
\newtheorem{remark}[theorem]{Remark}
\newtheorem*{unnumberedremark}{Remark}

\newcommand{\Hh}{\mathbb H}
\newcommand{\C}{\mathbb C}
\newcommand{\A}{\C[[t]]}
\newcommand{\K}{\C((t))}
\newcommand{\IC}{\operatorname{IC}}
\newcommand{\Gr}{\operatorname{gr}}

\newcommand{\Ext}{\operatorname{Ext}}

\title{Jantzen filtrations for graded affine Hecke algebras}
\author{Dan Ciubotaru}
\address{Mathematical Institute, University of Oxford, OX2 6GG, UK}
\email{dan.ciubotaru@maths.ox.ac.uk}
\date{August 2026}
\hypersetup{
  pdftitle={Jantzen filtrations for graded affine Hecke algebras},
  pdfauthor={Dan Ciubotaru}
}

\begin{document}
\raggedbottom
\begin{abstract}
We prove the Jantzen conjecture for standard modules of equal-parameter
graded affine Hecke algebras with real central character in the dominant
deformation direction.  The Jantzen levels are semisimple, and the
multiplicities are given by the full local intersection cohomology of the
corresponding orbit closures.  The proof identifies the algebraic
standard-to-contragredient map with the normal costalk-to-stalk map
on a contracting transverse slice.  Hard Lefschetz determines its
elementary divisors, and a residual grading on the specialized convolution
algebra proves semisimplicity.  An example in type $A$ is included to show that dominance is necessary.
\end{abstract}

\maketitle

\section{Introduction}

Let $G$ be a connected complex reductive group and $T\subset G$ a maximal
torus.  Put $\mathfrak g=\operatorname{Lie}G$,
$\mathfrak t=\operatorname{Lie}T$, and $R=R(G,T)$.  
Fix a positive system $R^+\subset R$, with simple roots $\Delta$, and put
$W=N_G(T)/T$ and
$\mathfrak t_{\mathbb R}=X_*(T)\otimes_{\mathbb Z}\mathbb R$.  Let
$\mathbf c:R\to\mathbb C$ be a $W$-invariant parameter function,
$\alpha\mapsto c_\alpha$, let $r\in\mathbb C$, and put
$k_\alpha=c_\alpha r$.  In the Bernstein presentation, the graded affine
Hecke algebra with parameter function $k$ is
\[
 \Hh(R,k)=
 \mathbb C[W]\otimes_{\mathbb C}\operatorname{Sym}(\mathfrak t^*)
 \qquad\text{as a vector space}.
\]
For $w\in W$, let $N_w$ denote the corresponding element of the group-algebra
subalgebra $\mathbb C[W]\subset\Hh(R,k)$.  Thus
$N_wN_{w'}=N_{ww'}$, and multiplication between the group and polynomial
subalgebras is determined by
\[
 \xi N_{s_\alpha}-N_{s_\alpha}s_\alpha(\xi)
 =c_\alpha r\,\langle\xi,\alpha^\vee\rangle
 \qquad(\alpha\in\Delta,\ \xi\in\mathfrak t^*),
\]
where
$s_\alpha(\xi)=\xi-\langle\xi,\alpha^\vee\rangle\alpha$.
This is the convention of
\cite[Section~2, equation~(4)]{SolleveldInduced}.  For the standard principal
Springer datum $(T,\{0\},\mathbf 1)$ one has $c_\alpha=2$ for every root.
Set $r=\tfrac12$, so that $c_\alpha r=1$, and write
$\Hh=\Hh(R,1)$ for the resulting equal-parameter algebra; its cross relation
has right-hand side $\langle\xi,\alpha^\vee\rangle$.  This parameter-one
algebra is used throughout.

A central character is called \emph{real} when its $W$-orbit meets
$\mathfrak t_{\mathbb R}$.  Fix such a character, choose a 
representative $\sigma\in\mathfrak t_{\mathbb R}$ of its $W$-orbit, and use
the parameter $(\sigma,r)$.  (In the
specialization just fixed, $r=\tfrac12$ and hence $2r=1$.)
Put
\[
 \mathfrak X_{\sigma,r}=\mathfrak g_{\sigma,2r}
 =\{v\in\mathfrak g:[\sigma,v]=2rv\},
 \qquad G_\sigma=Z_G(\sigma).
\]

At this central character, the relevant parameters are the enhanced
$G_\sigma$-orbits in $\mathfrak X_{\sigma,r}$ whose intersection complexes
occur in the localized principal Springer complex, denoted by
$K_{\sigma,r}$.  Equivalently, they are the enhanced orbits
satisfying the conditions of
\cite[Proposition~4.1]{Solleveld}.  Label the enhanced orbits by
\[
 \gamma=(\mathcal O_\gamma,\mathcal L_\gamma).
\]
The geometric construction of the Hecke algebra modules from cuspidal local systems goes
back to Lusztig's foundational series
\cite{LusztigCuspidalI,LusztigCuspidalII,LusztigCuspidalIII}.  In the
enhanced-orbit and normalization conventions used here,
\cite[Theorem~3.2(a),(c) and Theorem~4.2(a)]{Solleveld} says that these labels
parametrize the geometric standard modules $X_\gamma$ and their unique
irreducible quotients $L_\gamma$.
\begin{samepage}
For a label $\gamma$, define its dual label
\[
 \gamma^\dagger=(\mathcal O_\gamma,\mathcal L_\gamma^\vee).
\]
For a module or lattice $V$, the notation $V^\vee$ means its contragredient.
Thus $X_{\gamma^\dagger}^\vee$ involves two distinct operations: dualizing the
orbit local system and then taking the contragredient of the resulting
standard module.
In the principal Springer realization used in the main theorem, the
cuspidal datum $(T,\{0\},\mathbf1)$ is self-dual.  This identifies the
principal block with its dual block, but it does not require each individual
enhancement to be self-dual: duality sends the label $\gamma$ to
$\gamma^\dagger$.  Accordingly, the canonical map below has the
dual-labelled contragredient as its target.  When
$\mathcal L_\gamma^\vee\simeq\mathcal L_\gamma$, one has
$\gamma^\dagger=\gamma$ and this is the usual map from a standard module to
its own contragredient.  The dagger notation is retained throughout so that
the general statement does not depend on this additional self-duality.
\end{samepage}

The geometric realization and the analytic Langlands classification use
opposite signs for the central parameter.  Section~\ref{sec:fixed-slice-comparison} gives
their precise comparison by the Iwahori--Matsumoto involution, including its
effect on the inducing module, the parabolic, and the deformation direction.
Under this correspondence, the same
symbols $X_\gamma$ and $L_\gamma$ denote the geometric modules and their
analytic Iwahori--Matsumoto transforms, according to the convention in
force.  Lusztig's reduction
theorem transfers the result to affine Hecke algebras at real central
character; see~\cite[Theorem~9.3 and Section~10.9]{LusztigReduction}.

The deformation ring is $\mathbb C[[t]]$, with fraction field
$\mathbb C((t))$.  Write the analytic Langlands datum for $X_\gamma$ as
\[
 X_\gamma=X(M,\delta_{\rm an},\nu),
\]
where $M$ is the Langlands Levi, $\delta_{\rm an}$ is the irreducible
tempered inducing
module for the derived Levi factor, and
$\nu\in\mathfrak z(\mathfrak m)$ is the real unramified central twist; here
$\mathfrak m=\operatorname{Lie}M$.  Thus the fixed noncentral infinitesimal
character is carried by $\delta_{\rm an}$, while the full central variable is
carried by $\nu$.  Choose
\[
 \eta_{\rm an}\in X_*(Z(M)^\circ)\otimes_{\mathbb Z}\mathbb Q
\]
in the interior of its dominant Langlands chamber.  Rescale $\eta_{\rm an}$ by a
positive integer so that it is integral.  Positive rescaling replaces the deformation coordinate
by a positive scalar multiple and does not change the $t$-adic filtration.
For a Levi subgroup $M$, a direction
$\eta\in X_*(Z(M)^\circ)\otimes_{\mathbb Z}\mathbb Q$ is called
\emph{regular relative to $M$} when
\[
 \langle\alpha,\eta\rangle\ne0\quad(\alpha\in R\setminus R_M),
 \qquad\text{equivalently}\qquad
 \operatorname{Stab}_W(\eta)=W_M.
\]
Every direction in the interior of a Langlands chamber is regular in this
relative sense.  This is the meaning of ``regular'' for every deformation
direction in the paper; it never means that the full $W$-stabilizer is
trivial.

Put
\[
 S_M=\operatorname{Sym}(\mathfrak z(\mathfrak m)^*).
\]
Let $\Hh_M\subset\Hh$ be the corresponding parabolic graded affine Hecke
subalgebra and let $\Hh_{M,\mathrm{der}}$ be its derived-root-system factor.
In the principal untwisted setting used here,
\[
 \Hh_M\simeq\Hh_{M,\mathrm{der}}\otimes_{\mathbb C}S_M.
\]
Let
$\delta_{{\rm an},S_M}=\delta_{\rm an}\otimes_{\mathbb C}S_M$
be the universal unramified-twist family: the derived graded-Hecke factor
acts on $\delta_{\rm an}$, while
$\zeta\in\mathfrak z(\mathfrak m)^*$ acts by
\[
 \zeta\cdot(v\otimes f)=v\otimes(\zeta f).
\]
Form the universal standard family
\[
 \mathcal X_{M,S_M}
   =\Hh\otimes_{\Hh_M}\delta_{{\rm an},S_M}.
\]
The formal arc defines a homomorphism
\[
 \operatorname{ev}_{\nu,\eta_{\rm an}}:S_M\longrightarrow\mathbb C[[t]],
 \qquad f\longmapsto f(\nu+t\eta_{\rm an}).
\]
The standard lattice is, by definition, its pullback
\begin{equation}\label{eq:standard-lattice-definition}
 X_{\gamma,\A}
  =\mathbb C[[t]]
    \otimes_{S_M,\operatorname{ev}_{\nu,\eta_{\rm an}}}\mathcal X_{M,S_M}.
\end{equation}
Consequently, on the pulled-back inducing lattice, a central generator
$\zeta$ acts by the scalar
$\zeta(\nu+t\eta_{\rm an})=
\zeta(\nu)+t\langle\zeta,\eta_{\rm an}\rangle$.
It is finite free over $\mathbb C[[t]]$, and base change gives
\[
 X_{\gamma,\A}/tX_{\gamma,\A}\cong X(M,\delta_{\rm an},\nu)=X_\gamma.
\]
Applying the same construction to the dual-enhancement standard module,
with its corresponding dual inducing datum and the same oriented central
arc, defines the lattice $X_{\gamma^\dagger,\A}$ used below.
Since $\K=\A[t^{-1}]$, localization commutes with parabolic induction:
\[
 X_{\gamma,\A}\otimes_{\A}\K
 \simeq
 (\K\otimes_{\mathbb C}\Hh)
 \otimes_{\K\otimes_{\mathbb C}\Hh_M}
 (\delta_{\rm an}\otimes_{\mathbb C}\K)_{\nu+t\eta_{\rm an}},
\]
where the subscript means that $\zeta\in\mathfrak z(\mathfrak m)^*$ acts by
$\zeta(\nu)+t\langle\zeta,\eta_{\rm an}\rangle\in\K$.  Thus the generic fibre is a
standard module for the scalar-extended algebra
$\K\otimes_{\mathbb C}\Hh$, with $\K$-valued twist
$\nu+t\eta_{\rm an}$.

If $V$ is a finite free $\A$-lattice,
its contragredient is
\[
 V^\vee=\operatorname{Hom}_{\A}(V,\A),
\]
with the Hecke action defined by the linear transpose anti-involution
\[
 N_w^*=N_{w^{-1}},\qquad \xi^*=\xi
 \qquad(w\in W,\ \xi\in\mathfrak t^*)
\]
through $(h\cdot f)(v)=f(h^*v)$.  Every later occurrence of $h\mapsto h^*$
refers to this convention.

Over $\K$, the standard-to-contragredient intertwiner is determined only up
to a scalar in $\K^\times$.  An \emph{integral normalization} is a scalar
multiple that restricts to an $\A$-linear morphism
\[
 I_t:X_{\gamma,\A}\longrightarrow X_{\gamma^\dagger,\A}^{\vee},
 \qquad I_t\otimes_{\A}\K\text{ an isomorphism}.
\]
It is \emph{primitive} when its reduction modulo $t$ is nonzero, equivalently
when $I_t(X_{\gamma,\A})$ is not contained in
$tX_{\gamma^\dagger,\A}^{\vee}$.  In any $\A$-bases, this says that its
matrix has entries in $\A$ and at least one entry is a unit.  Removing the
common power of $t$ from any integral matrix produces the primitive
normalization, uniquely up to $\A^\times$.  Multiplication by an $\A$-unit
does not affect the filtration below.
Primitivity does not mean that $I_t$ is an isomorphism over $\A$.  Since it
becomes an isomorphism over $\K$, its cokernel is $t$-power torsion, and Smith
normal form gives diagonal entries
\[
 t^{a_1},\ldots,t^{a_N}\qquad(a_i\geq0,\ \min_i a_i=0).
\]
The map is an $\A$-isomorphism exactly when every $a_i$ is zero.  Positive
exponents occur when the specialization at $t=0$ loses rank; these exponents
are the elementary divisors measured by the Jantzen filtration.
Proposition~\ref{prop:general-coefficient-arc} compares this normalization
with the geometric canonical map.

The Jantzen filtration on $X_\gamma$ is
\begin{equation}\label{eq:jantzen}
 J^nX_\gamma=
 \frac{\{x\in X_{\gamma,\A}: I_t(x)\in
 t^nX_{\gamma^\dagger,\A}^{\vee}\}+tX_{\gamma,\A}}
 {tX_{\gamma,\A}}\qquad(n\geq0).
\end{equation}
Set
$\operatorname{gr}_J^nX_\gamma=J^nX_\gamma/J^{n+1}X_\gamma$.

For each enhanced orbit $\gamma$, choose a point
$x_\gamma\in\mathcal O_\gamma$.  Let
$i_\gamma:\{x_\gamma\}\hookrightarrow\mathfrak X_{\sigma,r}$ be its
inclusion, and put
\[
 A_\gamma=\pi_0\bigl(Z_{G_\sigma}(x_\gamma)\bigr).
\]
Let $\rho_\gamma$ be the irreducible $A_\gamma$-representation
corresponding to $\mathcal L_\gamma$.  For another enhanced orbit $\delta$, set
\[
 \IC_\delta=\IC(\overline{\mathcal O_\delta},\mathcal L_\delta),
\]
where the intersection complex is normalized to be perverse.  For every
enhanced orbit $\gamma$, put
\[
 d_\gamma=\dim_{\mathbb C}\mathcal O_\gamma.
\]
The costalk-to-stalk comparison introduces the dual label
$\delta^\dagger=(\mathcal O_\delta,\mathcal L_\delta^\vee)$.  For a formal
variable $u$, define
\begin{equation}\label{eq:full-ic-polynomial}
 \widehat P_{\gamma\delta}(u)=
 \sum_{j\geq0}
 \dim\operatorname{Hom}_{A_\gamma}\!\left(
   \rho_\gamma^\vee,
   \mathcal H^{-d_\delta+j}i_\gamma^*\IC_{\delta^\dagger}
 \right)u^j.
\end{equation}
The lower IC-vanishing bound gives
$\mathcal H^k i_\gamma^*\IC_{\delta^\dagger}=0$ for $k<-d_\delta$, so no
negative value of $j$ contributes.  The perverse support condition gives
$\mathcal H^k i_\gamma^*\IC_{\delta^\dagger}=0$ for $k>-d_\gamma$
\cite[Section~2.2]{BBD}.  Thus every nonzero summand above satisfies
$j\leq d_\delta-d_\gamma$, and in particular
\[
 v^{d_\delta-d_\gamma}\widehat P_{\gamma\delta}(v^{-1})
 \in\mathbb Z_{\geq0}[v].
\]
The label $\delta^\dagger$ occurs because Verdier duality gives
$\mathbb D\IC_\delta=\IC_{\delta^\dagger}$; the representation
$\rho_\gamma^\vee$ extracts the corresponding dual isotypic summand.  These
two dualizations match the realization of a standard module by a costalk and
of its contragredient by a stalk.
Lusztig's geometric multiplicity theorem
\cite[Proposition~10.5 and Corollary~10.7]{LusztigCuspidalII}, in the
normalization and enhanced-orbit notation of
\cite[Proposition~5.1(a)]{Solleveld}, is
\[
 [X_\gamma:L_\delta]=\widehat P_{\gamma\delta}(1).
\]
The Jantzen conjecture refines this multiplicity identity by placing each
copy of $L_\delta$ in a semisimple
layer; compare~\cite[Conjecture~6.2.2]{BC}.

\begin{maintheorem}
Let $\Hh$ be an equal-parameter graded affine Hecke algebra as above, and use
the standard principal Springer parametrization just fixed.  Let $X_\gamma$ have
real central character.  For every rational cocharacter deformation
direction in the interior of the dominant Langlands chamber, the
successive quotients of the Jantzen filtration of $X_\gamma$ are semisimple.
With the Langlands quotient placed in layer zero, their multiplicities satisfy,
for an indeterminate $v$,
\begin{equation}\label{eq:layer-polynomial}
 \sum_{n\geq0}[\operatorname{gr}_J^nX_\gamma:L_\delta]v^n
 =v^{d_\delta-d_\gamma}\widehat P_{\gamma\delta}(v^{-1}).
\end{equation}
\end{maintheorem}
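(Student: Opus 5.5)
The plan is to move the whole problem to geometry and use the abstract's outline there. First, via the Iwahori--Matsumoto comparison (Section~\ref{sec:fixed-slice-comparison}), I would realize the lattice $X_{\gamma,\A}$ geometrically. Restricting the equivariant Borel--Moore homology realization of standard modules to the one-parameter family $\sigma+t\eta$, with $\eta$ the geometric image of $\eta_{\rm an}$, gives a $\A$-family of costalks of the principal Springer complex along $\mathcal O_\gamma$. Dually, $X^\vee_{\gamma^\dagger,\A}$ becomes the corresponding family of stalks. Proposition~\ref{prop:general-coefficient-arc} should then identify the primitive $I_t$, up to $\A^\times$, with the geometric costalk-to-stalk map $i_\gamma^!\to i_\gamma^*$. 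Dominance of $\eta$ enters here. For generic $t$, the fixed locus of $\sigma+t\eta$ in $\mathfrak X$ is the Levi-attracting piece, so the generic fibre is an induced module whose standard-to-contragredient map is an isomorphism. This matches the analytic lattice~\eqref{eq:standard-lattice-definition}. A nondominant direction would instead attach the opposite parabolic, and the comparison would fail.

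Second, I would compute elementary divisors on a contracting transverse slice $S_\gamma$ to $\mathcal O_\gamma$ at $x_\gamma$. The $\mathbb C^\times$-action generated by $\eta$ together with $\sigma$ contracts $S_\gamma$ to $x_\gamma$. The equivariant parameter $t$ then acts on $H^*_{\mathbb C^\times}$ of the restricted complexes. Decomposing the localized Springer complex into $\IC_\delta\otimes V_\delta$, the map $i^!\IC_\delta|_{S}\to i^*\IC_\delta|_S$ over $H^*_{\mathbb C^\times}(\mathrm{pt})=\C[t]$ is, by contraction and the localization theorem, generically an isomorphism. Its cokernel is controlled by hard Lefschetz for the equivariant parameter acting as a Lefschetz operator on the cohomology of the link/projectivized slice, in the style of Beilinson--Bernstein/Ginzburg. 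Concretely, a stalk class in degree $-d_\delta+j$ is hit by $t^{d_\delta-d_\gamma-j}$ times a costalk class and by no smaller power. This gives Smith exponents $a=d_\delta-d_\gamma-j$, with multiplicity $\dim\operatorname{Hom}_{A_\gamma}(\rho_\gamma^\vee,\mathcal H^{-d_\delta+j}i_\gamma^*\IC_{\delta^\dagger})$, after taking the $\rho_\gamma$-isotypic part with the dual-label bookkeeping. The top degree $j=d_\delta-d_\gamma$ occurs only for $\delta=\gamma$ and gives exponent $0$, which places the Langlands quotient in layer zero. Reindexing $n=d_\delta-d_\gamma-j$ yields~\eqref{eq:layer-polynomial} at the level of Grothendieck group classes, consistent with Lusztig's multiplicity formula at $v=1$.

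Third, for semisimplicity I would put a grading on the specialized convolution algebra $\operatorname{Ext}^*(K_{\sigma,r},K_{\sigma,r})$ acting on the costalk family. The residual cohomological grading, after removing the $t$-degree, makes each $\operatorname{gr}^n_J$ a graded module in which $J^n$ is cut out by a pure degree. By the decomposition theorem and purity (Frobenius weights or Hodge theory), the degree-zero part of the Ext algebra is semisimple, and the positive-degree part raises degree. Since positive-degree elements act by shifting between layers, $\operatorname{gr}_J^n$ is a module over the degree-zero semisimple part, which gives semisimplicity.

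I expect the main obstacle to be the first step. The difficulty is proving that the analytically defined primitive $I_t$ agrees with the geometric costalk-to-stalk map exactly, up to an $\A$-unit and not merely up to $\K^\times$, along a dominant arc and through the Iwahori--Matsumoto sign change. I would first attack this with Proposition~\ref{prop:general-coefficient-arc} and an explicit computation of both maps on the generating line $1\otimes\delta_{\rm an}$.
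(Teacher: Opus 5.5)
\textbf{The map in your first step.} As written, your first step identifies $I_t$ with the ambient point adjunction $i_\gamma^!K\to i_\gamma^*K$. That map is zero, so this is a genuine gap. Write $i_\gamma=j\circ l$ with $j:\mathcal O_\gamma\hookrightarrow\mathfrak X_{\sigma,r}$ and $l:\{x_\gamma\}\hookrightarrow\mathcal O_\gamma$. The point adjunction factors as $l^!j^!K\to l^*j^!K\to l^*j^*K$. Because $j^!K$ has locally constant cohomology along the orbit, the first arrow is cup product with the equivariant Euler class of $T_{x_\gamma}\mathcal O_\gamma=[\mathfrak g_\sigma,x_\gamma]$.
\begin{itemize}
\item Since $r\neq0$ and $[\sigma,x_\gamma]=2rx_\gamma$, this tangent space contains the line $\mathbb Cx_\gamma$.
\item The point stabilizer, and in particular the torus of $\eta$, acts trivially on that line.
\item Hence the Euler class vanishes. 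For every nonzero orbit your map is identically zero along the arc, so it is not even proportional to $I_t$ over $\K$.
\end{itemize}
The map whose Lefschetz strings you analyse in your second step is the slice vertex map $k^!I\to k^*I$. That is a different map, and passing from one to the other is exactly multiplication by this vanishing class.

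The missing idea is to restrict along the orbit before taking the fibre. Use the normal map $c_x^N:l^*j^!K[-d_\gamma](-d_\gamma)\to l^*j^*K[-d_\gamma](-d_\gamma)$. Smooth base change along the action map $G_\sigma\times V_{\sigma,r}(x_\gamma)\to\mathfrak g_{\sigma,2r}$ identifies it with the slice vertex map, and no Euler class is divided out.

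This choice also settles what you call the main obstacle, with no computation on $1\otimes\delta_{\rm an}$:
\begin{itemize}
\item Relative regularity makes the generalized weight blocks of the generic induced module disjoint. So $\operatorname{End}(X_\K)=\K$, and the two maps are proportional over $\K$.
\item The normal map is primitive, because the head summand $\IC_\gamma$ restricts to a point-supported summand on the slice, where the map is the identity.
\item Two proportional primitive maps both have content $\A$, so the scalar is a unit.
\end{itemize}

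\textbf{Where dominance is used.} You have placed the use of dominance in the wrong step. Generic invertibility and the comparison above need only relative regularity. For any such $\eta$ one has $\mathfrak g^\eta=\mathfrak q$, so $\mathfrak g_{\sigma+t\eta,2r}=\mathfrak q\cap\mathfrak g_{\sigma,2r}$ for generic $t$, whether or not $\eta$ is dominant. Appendix~\ref{app:counterexample} applies Proposition~\ref{prop:general-coefficient-arc} on a non-dominant line.

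Dominance is needed exactly for the contraction you assert without proof in your second step. The torus that must contract the slice is the arc's own torus, namely the image of $\eta$ modulo the trivially acting $T_{\sigma,r}$. Some torus always contracts the graded Slodowy slice, but it governs the Smith form only along its own direction. The required computation is this:
\begin{itemize}
\item A lowest-weight vector in $\mathfrak g^f\cap\mathfrak g_{\sigma,2r}$, in an $\mathfrak{sl}_2$-summand of highest weight $m$, has $\sigma_0$-eigenvalue $r(m+2)>0$.
\item It therefore lies in the attracting nilradical, where the dominant $\eta$ has positive weights.
\item Only then does the equivariant parameter pull back to $c_\eta t$, with $c_\eta>0$, times an ample class on the projectivized punctured slice. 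Hard Lefschetz then yields your exponents $d_\delta-d_\gamma-j$.
\end{itemize}

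\textbf{What remains.} With these corrections, your Lefschetz bookkeeping and your grading--radical argument are essentially the paper's. You still need to supply three things:
\begin{itemize}
\item[(i)] Freeness of the stabilizer-equivariant costalk and stalk, which comes from purity at the vertex. This makes restriction to the arc an ordinary base change and guarantees the Smith form exists.
\item[(ii)] A grading on the exact central fibre compatible with that on the special fibre of the normal lattice. Evaluation at the nonzero $(\sigma,r)$ does not supply this. The paper quotients by $T_{\sigma,r}$; your non-equivariant $\operatorname{Ext}^\bullet(K_{\sigma,r},K_{\sigma,r})$ model is acceptable if you justify its identification with $\Hh/\mathfrak m_\chi$.
\item[(iii)] The equality of the Jantzen filtration with the degree filtration. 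This follows from the Smith--primitive identification for the normal map.
\end{itemize}
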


This is Theorem~\ref{thm:equal-parameter-jantzen}.  A stalk term indexed by
$j$ contributes to layer
\[
 n=d_\delta-d_\gamma-j.
\]
The support bound above guarantees that this integer is nonnegative.  Thus
\eqref{eq:layer-polynomial} records the complete local IC grading,
including the orbit-dimension translation that places the Langlands quotient
in layer zero.

\subsection*{The geometric comparison}

The geometric direction corresponding to the analytic arc is
$\eta_{\rm g}=-\eta_{\rm an}$, positive for the opposite attractive
parabolic.  The parameter $t$ is unchanged.  Let $x=x_\gamma$ and choose
a homogeneous triple $(x,h,f)$ with $\sigma-rh$ centralizing the triple.
The graded Slodowy slice is
\[
 S_x=x+\bigl(\ker(\operatorname{ad}f)\cap\mathfrak g_{\sigma,2r}\bigr).
\]
Proposition~\ref{prop:general-graded-slodowy} proves its transversality
and positivity for the chosen cocharacter.

There are two different restriction maps at $x$.  The ordinary point
adjunction includes the self-intersection factor of the orbit direction;
it is not the map used in this paper.  Instead, restrict along
$\mathcal O_\gamma$ and take the fibre at $x$.  With $j:\mathcal O_\gamma
\hookrightarrow\mathfrak X_{\sigma,r}$ and $l:\{x\}\hookrightarrow
\mathcal O_\gamma$, the normalized fibres of $K=K_{\sigma,r}$ are
\[
 \mathsf N_x^!K=l^*j^!K[-d_\gamma](-d_\gamma),\qquad
 \mathsf N_x^*K=l^*j^*K[-d_\gamma](-d_\gamma).
\]
The orbit adjunction induces $c_x^N:\mathsf N_x^!K\to\mathsf N_x^*K$.
On a transverse product neighbourhood this is exactly the vertex
costalk-to-stalk map of the normalized slice object.  This comparison,
proved in \eqref{eq:normal-slice-arrow}, removes no Euler class by
division.

Put $C_\chi=Z_G(\sigma)\times\mathbb C^\times$ and
$H=C_{\chi,x}^\circ$.  The fibres are equivariant for the stabilizer,
not for $C_\chi$.  Complete their $H$-equivariant hypercohomology at
$\chi=(\sigma,r)$ and pull back along the arc.  The coefficient homomorphism
is component-invariant: Proposition~\ref{prop:two-component-groups}
proves this using a reductive Levi factor of the stabilizer.  No
pointwise invariance of the Lie-algebra arc under its unipotent radical
is required.  After this base change,
the component action is linear, and its $\rho_\gamma$-multiplicity
spaces define the lattices
$\mathsf L_{!,\gamma}^{\eta_{\rm g}}$ and
$\mathsf L_{*,\gamma}^{\eta_{\rm g}}$.
The comparison with algebraic induction is
\begin{equation}\label{eq:master-comparison}
\begin{array}{ccc}
 X_{\gamma,\A}&\xrightarrow{\ I_t\ }&X_{\gamma^\dagger,\A}^{\vee}\\[2mm]
 {\scriptstyle\Phi_!}\downarrow&&\downarrow{\scriptstyle\Phi_*}\\[2mm]
 \mathsf L_{!,\gamma}^{\eta_{\rm g}}&
 \xrightarrow{\ c_\gamma^N\ }&
 \mathsf L_{*,\gamma}^{\eta_{\rm g}}.
\end{array}
\end{equation}
The vertical maps are isomorphisms over $\A$; after multiplying one by
an $\A$-unit the diagram commutes.  The lower map is primitive because
the head summand is point-supported on the slice.
Proposition~\ref{prop:general-coefficient-arc} proves the integral
comparison.

For a positive-dimensional slice, let $A_{\eta_{\rm g}}$ be the effective
contracting torus, and choose the
positive character $\xi_+$.  Its equivariant class
\[
 z=c_1(\xi_+)\in H^2_{A_{\eta_{\rm g}}}(\mathrm{pt}),\qquad
 \deg z=2,
\]
is ample on the projectivized punctured slice.  Under the coefficient arc,
\[
 z\longmapsto c_{\eta_{\rm g}}t,\qquad
 c_{\eta_{\rm g}}=\langle\xi_+,\eta_{\rm g}\rangle>0.
\]
For a perverse IC summand $I$ on the slice, the cone calculation gives
\[
 \operatorname{coker}(c_I)
 \simeq\mathbb H^\bullet
 \bigl([(S_x\setminus\{x\})/A_{\eta_{\rm g}}],\overline I\bigr)[1].
\]
Here $\overline I$ is the descended perverse coefficient and $[1]$ is
the relative-dimension shift of the torsor.
Hard Lefschetz decomposes this cokernel into strings
$\mathbb C[z]/(z^a)$; their lengths are the positive Smith exponents.
The head supplies the exponent-zero part.
Lemma~\ref{lem:normal-purity-base-change} justifies the change from
stabilizer equivariance to this one-variable calculation.

For semisimplicity, retain the full $C_\chi$-equivariant Ext algebra of
$K$ as the acting algebra, even though its module is obtained by
stabilizer restriction.  Quotienting the trivially acting parameter
torus $T_{\sigma,r}$ moves the residual parameter to the origin.
Perverse regrading then gives a finite-dimensional algebra
$\mathcal E_\chi^{\rm res}$ with
\[
 (\mathcal E_\chi^{\rm res})_0
   =\prod_\delta\operatorname{End}(V_\delta),\qquad
 \operatorname{rad}\mathcal E_\chi^{\rm res}
   =(\mathcal E_\chi^{\rm res})_{>0}.
\]
The Smith--Lefschetz calculation identifies the filtration with the
degree filtration on its normal-fibre module.  Positive-degree
elements strictly raise the filtration and annihilate each layer.
Each layer is therefore a module over the displayed semisimple
degree-zero quotient.

\subsection*{Relation with earlier work}

The geometric representation-theoretic foundations used in this paper were
established by Lusztig in the series \emph{Cuspidal local systems and graded
Hecke algebras}~\cite{LusztigCuspidalI,LusztigCuspidalII,LusztigCuspidalIII}.
Lusztig constructs graded Hecke algebras from cuspidal
local systems, realizes standard and irreducible modules in equivariant
intersection cohomology, and proves the induction theorems.  In particular,
\cite[Proposition~10.5 and Corollary~10.7]{LusztigCuspidalII} gives the
geometric formula for the multiplicity of an irreducible module in a standard
module.  In this paper, the argument is formulated using Solleveld's constructible-sheaf
model~\cite{Solleveld}, which is especially
convenient for the integral comparison below.  

The algebraic construction underlying this paper goes back to
Jantzen's original work on highest-weight modules
\cite{JantzenForms,Jantzen}.  Jantzen deforms a highest-weight module over
the local principal ideal domain $\mathbb C[t]_{(t)}$.  Its contravariant
form, or equivalently the associated map from the deformed standard module
to its contragredient dual, defines the $n$-th lattice by divisibility of the
image by $t^n$.  Completion from $\mathbb C[t]_{(t)}$ to $\mathbb C[[t]]$
does not change the induced filtration on the special fibre.  In Smith
normal form the map is diagonal with entries $t^{a_1},\ldots,t^{a_N}$;
the vectors with $a_i=n$ form a basis of the $n$-th associated graded
quotient.  The elementary-divisor formulation used here is the discrete
valuation-ring version of Jantzen's construction. 
Rogawski introduced the analogous
deformation filtration for standard modules of affine Hecke algebras
\cite{Rogawski}.

Beilinson and Bernstein recast the construction geometrically for regular
integral blocks of category $\mathcal O$~\cite{BeilinsonBernstein}.  The
deformed standard-to-costandard morphism corresponds to the canonical map
$i_!\to i_*$, and they identify its Jantzen filtration with the shifted
weight filtration on a nearby-cycle object; semisimplicity then follows from
Gabber's monodromy--weight theorem.  The local Smith--Lefschetz calculation here
plays an analogous role: primitive Lefschetz strings determine the
elementary divisors and hence the layer indices.  A new difficulty in our setting is that
exact specialization at a  graded Hecke parameter is
inhomogeneous.  The remedy is to quotient the parameter torus that acts trivially, and use the resulting residual grading on the specialized
convolution algebra. Its positive-degree ideal is the Jacobson radical.

For affine Hecke algebras of type $A$, Ginzburg
stated the semisimplicity and multiplicity theorem, without proof, in
\cite[Theorem~2.6.2]{GinzburgICM}; Suzuki later transported the category
$\mathcal O$ Jantzen filtration through the Arakawa--Suzuki functor and
proved that, for each fixed deformation direction, the resulting filtration
agrees with the Jantzen filtration on the corresponding degenerate affine
Hecke algebra standard module~\cite[Theorem~4.3.5]{Suzuki}.  Consequently, he obtained the multiplicity formula in that setting.
Related constructions of Jantzen filtrations for graded and affine Hecke
algebras appear in~\cite{Chan} and~\cite{FujitaHernandez}.
Suzuki further asserted independence from every direction satisfying his
nondegeneracy condition~\cite[Proposition~5.3.2]{Suzuki}.  For this he invokes
direction-independence of the Verma-module filtration from~\cite{Barbasch}, but
the cited result concerns the conventional Jantzen filtration and does not
establish independence for arbitrary regular non-dominant directions.
In fact, Gabber and Joseph had explicitly identified that stronger
independence statement as an open question~\cite[Section~4.2]{GabberJoseph}.
Williamson's later $\mathfrak{sl}_4$ example shows that it is false in that
generality~\cite[Sections~1.4 and~8.4]{WilliamsonLocal}.  
Appendix~\ref{app:counterexample} transfers the same phenomenon to a type-$A$
graded Hecke algebra. 

Grojnowski's unpublished note~\cite{GrojnowskiJantzen} contains the local
contracting-cone calculation at the heart of the present proof.  Its
application to affine Hecke modules presupposes the integral identifications
represented by the vertical arrows in~\eqref{eq:master-comparison}.  The main
new result is the extension to the principal Springer realization in an arbitrary 
Lie type.  It
requires parabolic induction, a graded Slodowy
contraction, local coefficients on orbit strata, and exact descent to
component-group summands.
All uses of Grojnowski's calculation are proved in
Section~\ref{sec:local-smith}; the note is
cited for historical attribution, and no result in this paper depends on the
reader having access to it.

\smallskip

As mentioned, the dominance hypothesis cannot simply be omitted.  Williamson showed in
category $\mathcal O$ that a regular non-dominant deformation can change the
Jantzen layers and destroy their
semisimplicity~\cite[Sections~1.6 and~8.4]{WilliamsonLocal}.
Appendix~\ref{app:counterexample} transports the equivariant $4231$
intersection matrix to multisegments and gives an $\Hh(\mathrm{GL}_{16})$-example in which two regular directions produce distinct
filtrations.

\smallskip

Very recently, Droschl \cite{Dro} has announced an independent geometric proof of Jantzen's (equivalently Rogawski's) conjecture for $GL(N)$.

\subsection*{Conventions and organization}

All intersection complexes are normalized to be perverse.  Cohomological
degrees are ordinary degrees unless a regrading is explicitly displayed; the
equivariant parameter $z$ has degree two.  The filtration is normalized by
placing the Langlands quotient in layer zero.  Verdier duality changes an
enhancement $\rho$ to $\rho^\vee$.  The opposite parabolics used in geometric
and Langlands induction are denoted $Q^+$ and $Q^-$, and their common Levi is
denoted $Q$.  The symbols $\eta_{\rm an}$ and $\eta_{\rm g}$ refer both to cocharacters and,
in linear expressions such as $\nu+t\eta_{\rm an}$, for their differentials.
Thus $\mathfrak q=\operatorname{Lie}Q$ denotes the Levi Lie algebra.  After conjugating a semisimple
parameter into
$\mathfrak t=\mathfrak t_{\mathbb R}\oplus i\mathfrak t_{\mathbb R}$,
$\operatorname{Re}$ denotes projection onto
$\mathfrak t_{\mathbb R}$.

The letter $\delta$ in an enhanced-orbit pair $(\gamma,\delta)$ is a label,
whereas $\delta_{\rm an}$ and $\delta_{\rm g}$ always denote analytic and
geometric inducing modules.  The complex $K$ denotes the relevant Springer
direct image after it has been fixed locally, and $K_{\sigma,r}$ denotes its
fixed-parameter specialization.  Residual Ext algebras are denoted by
calligraphic $\mathcal B$ below, to distinguish them from the Borels
$B^\pm$ and from the matrices in the appendices.

The term \emph{Grojnowski grading} refers to the compatible regrading of the
equivariant costalk and stalk modules constructed in Proposition
\ref{prop:ext-equivariance}.  It is obtained from the residual equivariant
grading of Lemma~\ref{lem:residual-grading}, followed by the perverse
regrading associated with a decomposition-theorem splitting; the costalk is
additionally shifted by the degree $\Delta_c$ of the canonical
costalk-to-stalk map, so that this map is homogeneous of degree zero.  The
resulting decreasing degree filtration on the completed costalk, after
pullback to the chosen one-variable arc and reduction modulo $t$, is
precisely its Smith filtration.  In the local cone coordinate used below,
the positive class $z$ pulls back to a nonzero scalar multiple of $t$, so
the two filtrations agree.  This is not the original equivariant
cohomological grading after specialization at the nonzero central character.
Its historical and algebraic provenance is recorded immediately after
Lemma~\ref{lem:residual-grading}.

For the following table put $S=\mathbb C[z]$.  The symbols $u$ and $v$ denote
homogeneous costalk and stalk elements, respectively, and $\deg$ denotes the
ordinary equivariant cohomological degree.  The subscript $G$ denotes the
Grojnowski regrading.  The table suppresses the common additive
translation that is later fixed by placing the Langlands quotient in degree
zero.  The degree conventions used in the local calculation are:
\begin{center}
\begin{tabular}{@{}lll@{}}
\toprule
object & notation & degree convention\\
\midrule
equivariant parameter & $z$ & $\deg z=2$\\
costalk-to-stalk map & $c$ & $\deg c=\Delta_c$\\
cone cokernel & $\mathbb H^\bullet(\mathcal Y,\overline I)[1]$
  & $\mathbb H^k$ occurs in degree $k-1$\\
costalk regrading & $|u|_{G,!}$ & $|u|_{G,!}=\deg(u)+\Delta_c$\\
stalk regrading & $|v|_{G,*}$ & $|v|_{G,*}=\deg(v)$\\
cyclic cokernel summand & $S/(z^a)$ & length and Smith exponent $a\geq1$\\
\bottomrule
\end{tabular}
\end{center}
These conventions make $c$ homogeneous of Grojnowski degree zero.  Thus, in
a Smith block $c(u)=z^av$ with $a\geq1$, one has
\[
 |u|_{G,!}=|z^av|_{G,*}=2a+|v|_{G,*};
\]
the assertion is not that $u$ and $v$ have the same degree.  Hard Lefschetz
places the bottom vector of a length-$a$ string in cokernel degree $-a$.
Consequently $|v|_{G,*}=-a$ and $|u|_{G,!}=a$, which is why the Smith
exponent agrees with the Grojnowski degree of the corresponding lattice
generator even though $\deg z=2$.  Proposition~\ref{prop:ext-equivariance}
proves this identification and the equality of the resulting degree and
Smith filtrations; it also places the exponent-zero quotient in Grojnowski
degree zero.
Corollary~\ref{cor:absolute-layer-normalization} applies the orbit-dimension
translation that places the Langlands quotient in degree zero.

Section~2 records the elementary-divisor formalism.
Section~\ref{sec:local-smith} proves the type-independent local
Smith--Lefschetz and semisimplicity statements.  Section
\ref{sec:fixed-slice-comparison} supplies the fixed-slice and integral
coefficient comparisons and proves the main theorem.  Section
\ref{sec:dominance} isolates the remaining inputs for geometric
unequal-parameter algebras and then explains the role of the dominant
chamber.  Appendix~\ref{app:counterexample} gives the direction-dependence
example.  Appendix~\ref{app:rank-one} works through the complete principal
$A_1$ block, illustrating in one calculation the algebraic intertwiner, the
fixed Springer slice, the Smith--Lefschetz comparison, and the residual
radical.

\section{Elementary divisors and graded filtrations}

\subsection{The valuation filtration}

The following elementary-divisor calculation will be used throughout.

\begin{lemma}[Elementary divisors]\label{lem:ed}
Let $M$ and $N$ be finite free $\A$-modules of equal rank, and let
$u:M\to N$ become an isomorphism over $\K$.  Put
$m=\operatorname{rank}_{\A}M$ and set
\[
 F^n(M/tM)=\bigl(u^{-1}(t^nN)+tM\bigr)/tM.
\]
There are bases in which $u=\operatorname{diag}(t^{a_1},\ldots,t^{a_m})$
with $a_i\geq0$.  If
$\Gr_F^n=F^n/F^{n+1}$, then
\[
 \dim\Gr_F^n(M/tM)=\#\{i:a_i=n\}.
\]
\end{lemma}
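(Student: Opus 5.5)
The plan is to reduce to the diagonal case using the fact that $\A=\mathbb C[[t]]$ is a discrete valuation ring, hence a principal ideal domain. The structure theorem for matrices over a PID (Smith normal form) gives invertible matrices $P\in\operatorname{GL}_m(\A)$ and $Q\in\operatorname{GL}_m(\A)$ with $QuP$ diagonal. Its diagonal entries are $\epsilon_i t^{a_i}$ with $\epsilon_i\in\A^\times$, since every nonzero element of $\A$ is a unit times a power of $t$. No entry is zero because $u\otimes_{\A}\K$ is an isomorphism, so $\det u\neq0$. Absorbing the units $\epsilon_i$ into the basis of $N$ gives bases $e_1,\dots,e_m$ of $M$ and $f_1,\dots,f_m$ of $N$ with $u(e_i)=t^{a_i}f_i$ and $a_i\ge0$. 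This proves the first assertion.

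Next I will compute $u^{-1}(t^nN)$ in these bases. Write $x=\sum_i x_ie_i$ with $x_i\in\A$. Then $u(x)=\sum_i x_it^{a_i}f_i$ lies in $t^nN$ exactly when $v_t(x_i)+a_i\ge n$ for every $i$, that is, when $x_i\in t^{\max(n-a_i,0)}\A$. Hence
\[
 u^{-1}(t^nN)=\bigoplus_i t^{\max(n-a_i,0)}\A\, e_i .
\]
Adding $tM$ and reducing modulo $t$ leaves the span of those $\bar e_i$ with $\max(n-a_i,0)=0$. Thus
\[
 F^n(M/tM)=\operatorname{span}_{\mathbb C}\{\bar e_i: a_i\ge n\}.
\]
Here $\bar e_i$ is the image of $e_i$ in $M/tM$, and these images form a $\mathbb C$-basis.

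The filtration is therefore spanned by nested subsets of a single basis. Consequently $\dim F^n-\dim F^{n+1}=\#\{i:a_i\ge n\}-\#\{i:a_i\ge n+1\}=\#\{i:a_i=n\}$, which is the claimed formula for $\dim\Gr_F^n$.

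The only point needing care is the claim that $F^n$ depends only on $u$ and not on the chosen bases. This holds by its definition, so computing it in the Smith bases is legitimate. There is no genuine obstacle; the one nontrivial input is the existence of Smith normal form over a PID.
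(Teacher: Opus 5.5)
Your proposal is correct and follows essentially the same route as the paper: Smith normal form over the PID $\A$, absorbing the units into the target basis, and reading off $F^n$ as the span of the $\bar e_i$ with $a_i\geq n$. Your explicit computation $u^{-1}(t^nN)=\bigoplus_i t^{\max(n-a_i,0)}\A\,e_i$ makes precise a step that the paper states only briefly.
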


\begin{proof}
Choose arbitrary bases of $M$ and $N$, and let $U\in\operatorname{Mat}_m(\A)$
be the matrix of $u$.  Since $u$ becomes an isomorphism over $\K$, the matrix
$U$ has full rank and $\det U\ne0$.  The Smith normal-form theorem over the
principal ideal domain $\A$ gives $P,Q\in\operatorname{GL}_m(\A)$ such that
\[
 PUQ=\operatorname{diag}(d_1,\ldots,d_m),
 \qquad d_i\ne0.
\]
The invertible row and column operations represented by $P$ and $Q$ are
precisely changes of target and source bases.  Every nonzero element of the discrete valuation ring
$\A=\mathbb C[[t]]$ has the form $d_i=\varepsilon_i t^{a_i}$ with
$\varepsilon_i\in\A^\times$ and $a_i\geq0$.  Absorbing the units
$\varepsilon_i$ into the target basis gives bases $(e_i)$ and $(f_i)$ for
which $u(e_i)=t^{a_i}f_i$.  The condition
$t^{a_i}f_i\in t^nN$ holds precisely when
$a_i\geq n$.  Thus $F^n$ is spanned by the reductions of those $e_i$ for
which $a_i\geq n$.  Taking successive quotients proves the formula.
\end{proof}

The filtration records the orders of vanishing of the map.  The geometric
argument identifies them with the lengths of Lefschetz strings.

\subsection{The graded comparison}

For conical local geometry the filtration retains a grading.  Give
$S=\mathbb C[z]$ the grading $\deg z=2$.  For a graded module $V$, the shift
convention is $V(d)^k=V^{k+d}$.

\begin{lemma}[Graded elementary divisors]\label{lem:graded-smith}
Let $M_!$ and $M_*$ be finite free graded $S$-modules of the same rank, and
let $c:M_!\to M_*$ be homogeneous of degree $\Delta_c$ and become an
isomorphism after inverting $z$.  Regrade the source by
\[
 |u|_!=\deg(u)+\Delta_c
 \qquad(u\in M_!\text{ homogeneous}),
\]
and retain the original grading on $M_*$.  Then $c$ has degree zero for these
two gradings.  There are homogeneous bases $(e_i)$ and $(f_i)$ and integers
$a_i\geq0$ for which
\[
 c(e_i)=z^{a_i}f_i,
 \qquad
 \deg(e_i)+\Delta_c=\deg(f_i)+2a_i.
\]
Equivalently,
\[
 \operatorname{coker}(c)\cong
 \bigoplus_i S/(z^{a_i})(s_i),
 \qquad s_i=-\deg(f_i),
\]
where summands with $a_i=0$ are zero and may be omitted.
\end{lemma}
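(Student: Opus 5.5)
The plan is to run the proof of Lemma~\ref{lem:ed} in the graded category, using that $S=\mathbb C[z]$ is a graded principal ideal domain. The first assertion is immediate. If $u\in M_!$ is homogeneous of degree $k$, then $c(u)$ is homogeneous of degree $k+\Delta_c$, which is exactly $|u|_!$. So $c:M_!(\Delta_c)\to M_*$ is homogeneous of degree zero, and it suffices to treat the case $\Delta_c=0$, with $M_!$ replaced by the shifted module.

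For the basis statement I would use the structure theorem for finitely generated graded modules over $S$. Since $c$ becomes an isomorphism after inverting $z$, it is injective, because $M_!$ is torsion-free. Its cokernel $Q$ is therefore a finitely generated graded $S$-module killed by a power of $z$. The graded structure theorem gives
\[
 Q\cong\bigoplus_i S/(z^{a_i})(s_i).
\]
The key point is that the only homogeneous prime of $S$ generated in positive degree is $(z)$, and a graded torsion module decomposes into graded cyclics. I would prove this directly by induction on the length of $Q$. Choose a homogeneous element of maximal annihilator exponent. By the usual argument, the cyclic submodule it generates is a direct summand, and the complement can be chosen graded, because all the maps involved are homogeneous.

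Next I would lift this decomposition to bases. Lift the homogeneous generators of the cyclic summands of $Q$ to homogeneous elements $f_i\in M_*$. The images $\bar f_i$ in $M_*/zM_*$ are linearly independent. To see this, compare dimensions: a graded Nakayama argument shows that $\operatorname{rank}M_*$ equals the number of summands plus the rank of the image contribution. Complete the $f_i$ to a homogeneous basis of $M_*$ using graded Nakayama, with $a_i=0$ for the added elements. The relation $z^{a_i}f_i\in\operatorname{im}c$ gives homogeneous elements $e_i$ with $c(e_i)=z^{a_i}f_i$, unique by injectivity. These $e_i$ span $M_!$, because $\operatorname{im}c$ is generated by the $z^{a_i}f_i$ by construction of the decomposition. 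They are also free, since $c$ is injective. The degree identity $\deg(e_i)+\Delta_c=\deg(f_i)+2a_i$ is just homogeneity of $c$ together with $\deg z=2$. The cokernel formula then follows, with $s_i=-\deg f_i$ under the convention $V(d)^k=V^{k+d}$.

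The main obstacle is the lifting step: arranging that a homogeneous basis of $M_*$ is adapted simultaneously to the submodule $\operatorname{im}c$. The cleanest route is probably to avoid the abstract cokernel decomposition altogether and perform graded Smith normal form directly. Write the matrix of $c$ in homogeneous bases, so that its entries are monomials $\lambda z^k$. Then pivot on an entry with minimal $z$-exponent. Row and column operations by homogeneous monomial multiples preserve homogeneity of the bases, and the monomial shape of the entries ensures that the pivot divides everything in its row and column.
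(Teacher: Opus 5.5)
Your closing suggestion, the direct graded Smith algorithm, is essentially the paper's proof, and it is correct. The paper writes $c$ in homogeneous bases, observes that every entry is a monomial $\lambda z^k$, and pivots on an entry of minimal $z$-exponent. It then clears the pivot's row and column by homogeneous monomial multiples and repeats on the remaining submatrix. The point that makes the repetition legitimate is that each new entry $C_{ij}-(C_{i1}/z^{a_1})C_{1j}$ is homogeneous of the same degree as $C_{ij}$, hence again a monomial.

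The structure-theorem route, which occupies most of your proposal, can be made to work, but its completion step is wrong as written. You complete the lifts $f_i$ to a homogeneous basis of $M_*$ and declare $a_j=0$ for the added elements. That requires the added elements to lie in $\operatorname{im}c$, and an arbitrary completion violates this. For instance, take $c=\operatorname{diag}(1,z)$ with both target basis vectors in degree $0$. Then $(1,1)$ together with the lift $(0,1)$ of the generator of $Q\cong S/(z)$ is a homogeneous basis of $M_*$, but $(1,1)\notin\operatorname{im}c$.

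The repair is as follows.
\begin{enumerate}
\item The lifts $f_i$ map to a basis of $Q/zQ=M_*/(zM_*+\operatorname{im}c)$, so they are independent in $M_*/zM_*$. Your dimension count is not needed for this.
\item The kernel of $M_*/zM_*\to Q/zQ$ is the image of $\operatorname{im}c$. Complete the $f_i$ by homogeneous elements $g_j\in\operatorname{im}c$ spanning that kernel, and invoke graded Nakayama to get a basis of $M_*$.
\item The claim that $\operatorname{im}c$ is generated by the $z^{a_i}f_i$ and the $g_j$ is not ``by construction.'' It needs the annihilator argument: if $m=\sum p_if_i+\sum q_jg_j\in\operatorname{im}c$, then $\sum p_i\bar f_i=0$ in $Q=\bigoplus S\bar f_i$, which forces $z^{a_i}\mid p_i$.
\end{enumerate}

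With this fix both routes prove the lemma. The Smith-algorithm route avoids the lifting issue entirely, which is why the paper uses it.
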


\begin{proof}
\smallskip
\noindent\emph{Injectivity and torsion of the cokernel.}
Because $c$ becomes an isomorphism after $z$ is inverted, every element of
$\ker c$ is killed by a power of $z$.  The free $S$-module $M_!$ has no
$z$-torsion, so $\ker c=0$.  Similarly,
$\operatorname{coker}(c)[z^{-1}]=0$.  Since the cokernel is finitely
generated, one power of $z$ kills it.  Consequently there is an exact sequence of
graded modules
\[
 0\longrightarrow M_!\xrightarrow{c}M_*
 \longrightarrow\operatorname{coker}(c)\longrightarrow0
\]
whose last term is finite-length and $z$-power torsion.

\smallskip
\noindent\emph{The graded Smith form.}
With the source regrading in the statement, $c$ is a degree-zero map.  Choose
homogeneous bases initially.  Every entry of the matrix of $c$ is then
homogeneous; because $S=\mathbb C[z]$ and $\deg z=2$, each nonzero entry is a
scalar multiple of a power of $z$.  Choose an entry having the smallest
$z$-exponent, move it to the first diagonal position, and rescale it to
$z^{a_1}$.  This entry divides every other matrix entry.  Subtracting suitable
homogeneous multiples of the first row and column therefore clears the rest
of that row and column.  The quotients used in these operations are powers of
$z$ of exactly the degree required to keep every new basis vector
homogeneous.  Repeating the argument on the remaining submatrix gives
homogeneous bases $(e_i)$ and $(f_i)$ in which the matrix is diagonal with
entries $z^{a_i}$.  This is the graded Smith algorithm in the present
one-variable setting.

Returning to the original source grading, homogeneity of
$c(e_i)=z^{a_i}f_i$ gives
\[
 \deg(e_i)+\Delta_c=\deg(f_i)+2a_i.
\]

\smallskip
\noindent\emph{The graded cokernel.}
The cokernel of the $i$-th diagonal block is generated by the image of
$f_i$, has the single relation $z^{a_i}f_i=0$, and places that generator in
degree $\deg(f_i)$.  With the shift convention
$V(d)^k=V^{k+d}$, this block is
$S/(z^{a_i})(-\deg(f_i))$.  Taking the direct sum proves the displayed
decomposition.  An exponent-zero block is an isomorphism and
contributes nothing to the cokernel.

\smallskip
\noindent\emph{Completion and the $t$-adic filtration.}
Flat completion extends the homogeneous bases to bases over
$\mathbb C[[z]]$ and leaves the diagonal entries $z^{a_i}$ unchanged.  Under
the base change $z\mapsto c_\eta t$, the $i$-th entry becomes
\[
 (c_\eta t)^{a_i}=c_\eta^{a_i}t^{a_i}.
\]
The scalar $c_\eta^{a_i}$ is a unit and can be absorbed into a target basis
vector.  Thus the elementary divisors over $\A$ are exactly $t^{a_i}$.
Lemma~\ref{lem:ed} then says that the $n$-th associated-graded quotient is
spanned by the reductions of the $e_i$ with $a_i=n$.  For $n>0$, their
degrees, and hence their graded multiplicities, are recorded by the shifts in
the graded cokernel decomposition.  When $n=0$, the diagonal block is already
an isomorphism and disappears from the cokernel; reducing $c$ modulo $z$
identifies this layer instead with $\operatorname{im}(c\bmod z)$.
\end{proof}

Thus, after completion and base change $z\mapsto c_\eta t$ with
$c_\eta>0$, the elementary divisors are $t^{a_i}$ up to units.  The graded
cokernel determines the positive Smith exponents and their graded
multiplicities in the $t$-adic filtration.  It does not see the
exponent-zero blocks, which contribute nothing to the cokernel; these are
recovered from $\operatorname{im}(c\bmod z)$.  Proposition
\ref{prop:ext-equivariance} gives a basis-free, functorial identification of
both kinds of layer.

\begin{remark}\label{rem:completed-smith}
The same conclusion holds if finite freeness is known only after localization
at $(z)$ or after $z$-adic completion.  The localized or completed source and
target are then free over the discrete valuation ring $S_{(z)}$ or
$\mathbb C[[z]]$, so ordinary Smith theory applies.  Flatness identifies the
resulting torsion cokernel with the localization or completion of the graded
cokernel.  The Smith exponents $a_i$ are therefore unchanged.
\end{remark}

\section{Local Smith--Lefschetz theory and semisimplicity}
\label{sec:local-smith}

\subsection{The equivariant cone calculation}

The local filtration problem reduces to a positively contracted affine
cone.  The calculation is stated with the nonconstant coefficient
systems needed in generalized Springer blocks.

Let $X$ be a pure $d$-dimensional affine variety with a positive
$\mathbb C^\times$-action and unique fixed point $x$.  Put
$U=X\setminus\{x\}$, let $i:\{x\}\hookrightarrow X$ be the inclusion, and
let $\mathcal Y=[U/\mathbb C^\times]$ denote the quotient stack.  Choose the
equivariant generator
$z\in H^2_{\mathbb C^\times}(\mathrm{pt})$ so that its associated orbifold
line bundle on $\mathcal Y$ is ample.  Let
$\mathcal E$ be a pure polarizable equivariant local system of geometric
origin on a smooth invariant
stratum whose closure is $X$, and normalize
$I=\IC(X,\mathcal E)$ to be perverse.  Write $\overline I$ for the descended
pure IC object on $\mathcal Y$.  Write
\[
 c:\mathsf L_!=H^\bullet_{\mathbb C^\times}(i^!I)
 \longrightarrow
 \mathsf L_*=H^\bullet_{\mathbb C^\times}(i^*I)
\]
for the canonical map.  Assume that its source and target
are finite free over $\mathbb C[z]$.  Alternatively, assume that their
$z$-adic completions are finite free over $\mathbb C[[z]]$; in that case all
module statements below are read after completion.

\begin{lemma}[Equivariant cone calculation with local coefficients]\label{lem:cone-calculation}
Under these assumptions, $c$ is injective and there is a canonical graded
isomorphism
\[
 \operatorname{coker}(c:\mathsf L_!\to\mathsf L_*)
 \cong \mathbb H^\bullet(\mathcal Y,\overline I)[1]
\]
as graded $\mathbb C[z]$-modules, where the hypercohomology is placed in
ordinary cohomological degrees (and is ordinary intersection cohomology when
$\mathcal E$ is constant), and $z$ acts
by the Chern class of the ample orbifold line bundle on $\mathcal Y$.
If a finite group $\Gamma$ acts equivariantly on all the data, the
identification is $\Gamma$-equivariant.  For every $k\geq0$, hard Lefschetz
gives
\[
 z^k:\mathbb H^{-k}(\mathcal Y,\overline I)
 \xrightarrow{\sim}
 \mathbb H^k(\mathcal Y,\overline I)(k).
\]
\end{lemma}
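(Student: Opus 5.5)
The plan is to apply the open--closed distinguished triangle for the fixed point and the open complement, and then equivariant hypercohomology. Let $j:U\hookrightarrow X$ be the open inclusion. The triangle
\[
 i_*i^!I\longrightarrow I\longrightarrow j_*j^*I\xrightarrow{+1}
\]
together with $i^*$ and the identification $i^*i_*=\mathrm{id}$ gives a triangle $i^!I\to i^*I\to i^*j_*j^*I\xrightarrow{+1}$. Its first arrow is the canonical map $c$. The contraction lemma for the positive $\mathbb C^\times$-action identifies $H^\bullet_{\mathbb C^\times}(i^*\mathcal F)\cong H^\bullet_{\mathbb C^\times}(X,\mathcal F)$ for every equivariant complex $\mathcal F$ (Springer/Braden hyperbolic localization in its simplest form). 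Applied to $\mathcal F=j_*j^*I$, this gives
\[
 H^\bullet_{\mathbb C^\times}(i^*j_*j^*I)\cong H^\bullet_{\mathbb C^\times}(U,I|_U).
\]
Because the action on $U$ has finite stabilizers, the right side equals $\mathbb H^\bullet(\mathcal Y,\overline I')$, where $\overline I'$ is the descent of $I|_U$. With the perverse normalization on $\mathcal Y$ (dimension $d-1$), one has $I|_U=\pi^*\overline I[1]$, and this produces the shift $[1]$. The resulting long exact sequence is $\mathbb C[z]$-linear, and it is $\Gamma$-equivariant by functoriality of every step.

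Next I would prove that $c$ is injective, so that the long exact sequence splits into short exact sequences and gives $\operatorname{coker}(c)\cong\mathbb H^\bullet(\mathcal Y,\overline I)[1]$. Two facts give this. First, $\mathbb H^\bullet(\mathcal Y,\overline I)$ is finite-dimensional, since $\mathcal Y$ is a proper Deligne--Mumford stack (a weighted-projective-type quotient) and $\overline I$ is constructible. Hence it is $z$-power torsion. Second, $\mathsf L_!$ is free by hypothesis. The connecting map $\mathbb H^\bullet(\mathcal Y,\overline I)\to\mathsf L_!$ therefore carries a torsion module into a torsion-free one, so it vanishes. Exactness then gives $\ker c=0$. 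In the completed variant the same argument works over $\mathbb C[[z]]$, using flatness of completion (Remark~\ref{rem:completed-smith}). On the cohomology of $\mathcal Y$, the action of $z$ is cup product with $c_1$ of the orbifold line bundle $U\times_{\mathbb C^\times}\mathbb C_{\chi}$ attached to the generator. By the choice of $z$ this bundle is ample.

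For the hard Lefschetz statement, the coarse space of $\mathcal Y$ is projective: it is $\operatorname{Proj}$ of the positively graded coordinate ring of $X$. Moreover $\overline I$ is the IC extension of a pure polarizable local system of geometric origin. The relative hard Lefschetz theorem (BBD/Gabber in the $\ell$-adic setting, or Saito's theorem for polarizable Hodge modules) for the projective coarse moduli map, together with $\mathbb Q$-coefficients so that finite stabilizers contribute nothing, then gives $z^k:\mathbb H^{-k}\xrightarrow{\sim}\mathbb H^k$ with a Tate twist $(k)$. Because the action of $\Gamma$ commutes with the Lefschetz operator, the isomorphism is $\Gamma$-equivariant.

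The main obstacle is the passage to the stack. Three points need care: that $H_{\mathbb C^\times}(U,-)$ agrees with the hypercohomology of $\mathcal Y$; that the shift is exactly $[1]$, so the perverse degrees match the indexing in the statement; and that hard Lefschetz holds for the orbifold ample class rather than for an honest ample line bundle. I would settle the last point first by replacing $z$ with a positive multiple $mz$ that descends to an ample line bundle on the coarse space. Rescaling by the positive factor $m$ does not affect the Lefschetz isomorphisms.
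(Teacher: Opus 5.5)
Your proposal is correct and follows the paper's proof: the localization triangle at the vertex, the contraction principle, descent to $\mathcal Y=[U/\mathbb C^\times]$ with the torsor shift $[1]$, and hard Lefschetz on the projective coarse space $Y_c$ for a positive multiple of $z$ that descends to an ample bundle. The only variation is the injectivity step, which the paper obtains by inverting $z$ and using equivariant localization, whereas you get the vanishing of the connecting map from the finite-dimensionality, hence $z$-torsion, of $\mathbb H^\bullet(\mathcal Y,\overline I)$; the two arguments are equivalent here. One correction to your wording: the Lefschetz step is absolute hard Lefschetz on $Y_c$, applied to the pure perverse coarse-moduli pushforward $q_*\overline I$ (exact because the inertia is finite), not a relative statement for the coarse moduli map.
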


\begin{proof}
\smallskip
\noindent\emph{The localization sequence.}
For $j:U\hookrightarrow X$, the localization distinguished triangle is
\[
 i_*i^!I\longrightarrow I
 \longrightarrow Rj_*j^*I
 \xrightarrow{\partial}i_*i^!I[1].
\]
Thus the last arrow returns to the first object, shifted cohomologically by
$[1]$.  Applying $i^*$ and using $i^*i_*\simeq\mathrm{id}$ gives the
distinguished triangle on the fixed point
\[
 i^!I\xrightarrow{c}i^*I\longrightarrow i^*Rj_*j^*I
 \xrightarrow{i^*\partial}i^!I[1].
\]
The equivariant hypercohomology of the third term is identified as follows.  Since
$Rj_*j^*I$ is equivariant and the $\mathbb C^\times$-action contracts $X$ to
$x$, the contraction principle gives
\[
 R\Gamma_{\mathbb C^\times}
   \bigl(\{x\},i^*Rj_*j^*I\bigr)
 \simeq
 R\Gamma_{\mathbb C^\times}
   \bigl(X,Rj_*j^*I\bigr).
\]
By the defining property of the derived direct image, the latter complex is
\[
 R\Gamma_{\mathbb C^\times}(U,j^*I).
\]
Thus the long exact sequence of the displayed triangle contains, in every
degree, the costalk-to-stalk map $c$ followed by the equivariant cohomology
of the punctured cone.

After $z$ is inverted, equivariant localization
\cite[Theorem~6.2(2)]{GKM} leaves only the fixed
point, and the first map becomes an
isomorphism.  Its kernel is therefore $z$-torsion; as a submodule of the free
module $\mathsf L_!$, it vanishes.  The connecting homomorphism following the
punctured-cone term has image in the kernel of $c$ in the next degree, and is
therefore zero.  Hence the long exact sequence breaks, degree by degree, into
short exact sequences
\[
 0\longrightarrow H^k_{\mathbb C^\times}(i^!I)
 \xrightarrow{c}H^k_{\mathbb C^\times}(i^*I)
 \longrightarrow H^k_{\mathbb C^\times}(i^*Rj_*j^*I)
 \longrightarrow0.
\]
In the completed alternative, apply the same argument after $z$-adic
completion.  The modules are finite over the noetherian coefficient ring, so
completion is exact; localization also shows that the cokernel is killed by
a power of $z$.

\smallskip
\noindent\emph{Passage to the projective quotient.}
The action on $U$ has finite stabilizers, so its Borel construction computes
the rational cohomology of the quotient stack
$\mathcal Y=[U/\mathbb C^\times]$.  Let
$q:\mathcal Y\to Y_c$ be the coarse-moduli morphism; positivity of the
grading makes $Y_c$ projective.  A stack coefficient cannot be replaced by an
ordinary local system on $Y_c$ without losing a nontrivial stabilizer
representation.  The relevant coefficient on the coarse quotient is instead
the derived pushforward $Rq_*\overline I$.  In characteristic zero this pushforward is
exact for finite inertia and hence agrees with $q_*\overline I$ here.

At a geometric point
$\bar y\to Y_c$, proper base change identifies the higher stalks of
$Rq_*\overline I$ with the group cohomology of the finite stabilizer
$\Gamma_{\bar y}$ acting on the corresponding stalk of $\overline I$.
Because $\mathbb C[\Gamma_{\bar y}]$ is semisimple, invariants are exact and
this group cohomology vanishes in positive degrees.  Equivalently, on a
finite quotient chart $[V_0/\Gamma_0]$, coarse pushforward is the Reynolds
summand $(r_*-)^{\Gamma_0}$ of the finite pushforward along
$r:V_0\to V_0/\Gamma_0$.  The averaging idempotent is self-adjoint, so the
summand commutes with Verdier duality and preserves purity, semisimplicity,
and polarizability.  Here self-adjointness is taken with respect to the
Verdier pairing in the category of polarizable pure Hodge modules, after
averaging the polarization over the finite group.  Consequently
\[
 R\Gamma(\mathcal Y,\overline I)
 \simeq R\Gamma(Y_c,q_*\overline I).
\]
This description retains arbitrary finite-stabilizer coefficient
representations with their correct invariant multiplicities.

By the definition of the quotient stack, equivariant derived objects on
$U$ are the same as derived objects on $\mathcal Y$.  Let
$\pi:U\to\mathcal Y$ be the quotient atlas.  It is smooth of relative
complex dimension one, so $\pi^*[1]$ is perverse $t$-exact.  Since
$\overline I$ is normalized to be perverse on $\mathcal Y$, its pullback is
therefore
\[
 j^*I=\pi^*\overline I[1]
\]
in the underlying constructible category.  Equivariant derived global
sections consequently give
\[
 R\Gamma_{\mathbb C^\times}(U,j^*I)
 \simeq R\Gamma(\mathcal Y,\overline I[1])
 \simeq R\Gamma(\mathcal Y,\overline I)[1].
\]
Combining this with the contraction and direct-image identifications above
proves
\[
 H^\bullet_{\mathbb C^\times}(i^*Rj_*j^*I)
 \cong \mathbb H^\bullet(\mathcal Y,\overline I)[1],
\]
which is the asserted description of the cokernel of $c$.  The shift is
independent of $d$: it is the relative dimension of the punctured-cone
torsor.  For
example, when $X=\mathbb A^d$ and $I=\mathbb C_X[d]$, the bottom class of
$H^\bullet(\mathbb P^{d-1},\mathbb C[d-1])[1]$ lies in degree $-d$, as does
the generator of the equivariant stalk.
For constant coefficients, if $IH^\bullet(Y_c)$ denotes ordinary
topologically normalized intersection cohomology, the same convention reads
\[
 \mathbb H^\bullet(\mathcal Y,\overline I)[1]
 \cong IH^\bullet(Y_c)[d].
\]

\smallskip
\noindent\emph{Hard Lefschetz and equivariance.}
With the sign convention fixed above, $z$ is the first Chern class of the positive
orbifold line bundle associated with $U\to\mathcal Y$.  Let $N$ be a common
multiple of the orders of the finite stabilizers.  Its $N$-th tensor power
descends to $\mathcal O_{Y_c}(N)$ on the coarse quotient
$Y_c=\operatorname{Proj}\mathbb C[X]$, and this descended line bundle is
ample.  Thus $z$ is a positive rational multiple of its first Chern class.
The object $\overline I$ is pure:
in the generalized Springer setting the coefficient system has finite
monodromy and is of geometric origin, and purity is preserved by IC
extension and finite-stack descent.  Apply projective hard Lefschetz on
$Y_c$ to the pure polarizable object $q_*\overline I$.  In the Hodge-module
realization this is Saito's hard Lefschetz theorem
\cite[Th\'eor\`eme~5.3.1]{SaitoPolarizable}; equivalently, after spreading
out, it follows from the pure $\ell$-adic theorem
\cite[Th\'eor\`eme~5.4.10]{BBD}.  It gives
\[
 z^k:\mathbb H^{-k}(\mathcal Y,\overline I)
 \xrightarrow{\sim}
 \mathbb H^k(\mathcal Y,\overline I)(k).
\]
The argument allows $\mathcal E$ to be nonconstant.  Naturality of the
localization triangle and of descent shows that an additional finite
component action $\Gamma$ commutes with
both the cokernel identification and the Lefschetz operators.
\end{proof}

We next construct the graded module on which the local filtration is
formed.  Restriction along an orbit is not restriction to a point, and
equivariance on a point is equivariance for its stabilizer.

\subsection{Normal restriction and change of equivariance}

Let $C$ act on a smooth variety $X$, let $\mathcal O=Cx$ have complex
dimension $d$, and write $j:\mathcal O\hookrightarrow X$ and
$l:\{x\}\hookrightarrow\mathcal O$.  Work in the invariant open complement
of $\overline{\mathcal O}\setminus\mathcal O$, so that $j$ is closed.
For a $C$-equivariant complex $P$, define its \emph{normal fibres}
\[
 \mathsf N_x^!P=l^*j^!P[-d](-d),\qquad
 \mathsf N_x^*P=l^*j^*P[-d](-d).
\]
The adjunction along the orbit gives a natural morphism
\begin{equation}\label{eq:normal-fibre-map}
 c_x^N(P):\mathsf N_x^!P\longrightarrow\mathsf N_x^*P.
\end{equation}
These are objects equivariant for $C_x$, not for $C$.  Smooth purity
on the orbit gives
\begin{equation}\label{eq:normal-point-objects}
 \mathsf N_x^!P\simeq i_x^!P[d],\qquad
 \mathsf N_x^*P=i_x^*P[-d](-d).
\end{equation}
These formulas identify objects; they do not identify
\eqref{eq:normal-fibre-map} with the ordinary point adjunction.

For a transverse slice $s:V\hookrightarrow X$, with vertex
$k:\{x\}\hookrightarrow V$, put $I=s^*P[-d](-d)$.
A transverse product neighbourhood identifies the orbit adjunction with
the identity in the orbit direction tensored with the vertex adjunction:
\begin{equation}\label{eq:normal-slice-arrow}
 \bigl(\mathsf N_x^!P\xrightarrow{c_x^N(P)}\mathsf N_x^*P\bigr)
 \simeq\bigl(k^!I\longrightarrow k^*I\bigr).
\end{equation}
The comparison can be made algebraically.  After shrinking $V$ about
$x$, transversality makes the action map
$a:C\times V\to X$ smooth and gives $a^{-1}(\mathcal O)=C\times\{x\}$.
Smooth base change for the orbit adjunction, together with
$a^*P\simeq\operatorname{pr}_V^*s^*P$ from equivariance, identifies its
pullback with the vertex adjunction on $V$.  Restricting to
$\{1\}\times\{x\}$ gives \eqref{eq:normal-slice-arrow}.
If a torus $T\subset C_x$ preserves $V$, the construction is
$T$-equivariant for $t\cdot(c,v)=(tct^{-1},tv)$; the neighbourhood can
be chosen $T$-stable.  No invariance of $V$ under the full stabilizer
$C_x$ is required.  The comparison is natural in $P$ and commutes
with every morphism $P\to P'[m]$.  This is the stratumwise
intersection-form construction of
\cite[Lemma~3.9 and Remark~3.12]{JMWParity}.
The graded Slodowy construction below supplies the transversality used here.

The point adjunction includes the self-intersection factor of the
orbit-tangent space.  That Euler class may vanish.  No division by it is
made in \eqref{eq:normal-slice-arrow}.

\begin{lemma}[Purity at the vertex and equivariant base change]
\label{lem:normal-purity-base-change}
Let $I$ be a pure perverse IC complex of weight $w$ on a positively
contracted affine cone with vertex $x$, equivariant for the contracting
action and with polarizable coefficients.
Then $H^q(i_x^*I)$ is pure of weight $w+q$, and the costalk has the
corresponding purity by duality.
If a connected linear algebraic group $H$ supplies an equivariant
structure on either vertex object in the mixed category, its
equivariant hypercohomology is finite free
over $H_H^\bullet(\mathrm{pt})$.  Change of equivariance to a subtorus
is computed by ordinary tensor product.  These assertions also hold for
finite direct sums and equivariant direct summands.
\end{lemma}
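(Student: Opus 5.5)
The proof has three parts. Vertex purity follows from the contraction principle and Gabber's purity theorem. Freeness follows from equivariant formality, which purity forces. Base change follows from the collapse of the restriction spectral sequence.

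\emph{Purity at the vertex.} Since the $\mathbb C^\times$-action contracts the cone $X$ to $x$, the contraction lemma gives $i_x^*I\simeq R\Gamma(X,I)$. The action is positive, so $R\Gamma(X,I)$ is also computed by the hypercohomology near the vertex. To bound weights I would use the cone calculation of Lemma~\ref{lem:cone-calculation} non-equivariantly. The stalk $\mathcal H^q(i_x^*I)$ is the primitive truncation, in degrees $q<0$ up to normalization, of $\mathbb H^{q}(\mathcal Y,\overline I)$ modulo the image of $z$. This follows from the standard description of the stalk of an IC complex at a cone point as the truncated cohomology of the link, combined with the Gysin sequence for the $\mathbb C^\times$-torsor.

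Because $q_*\overline I$ is pure on the projective coarse quotient $Y_c$, its hypercohomology is pure: $\mathbb H^q$ has weight $w+q$ in the mixed Hodge module or $\ell$-adic setting. Cokernels of the weight-$(1,1)$ Lefschetz operator $z$ stay pure of the shifted weight. Hence $H^q(i_x^*I)$ is pure of weight $w+q$. Verdier duality exchanges $i_x^*$ with $i_x^!$ and $I$ with its dual of weight $-w$, so the costalk satisfies the dual statement: $H^q(i_x^!I)$ is pure of weight $w+q$. An alternative route is Gabber's theorem via Springer's lemma on contracting actions, that $i_x^*I$ has weights $\le w+q$ while $R\Gamma(X,I)$ has weights $\ge w+q$; I would cite that if the link argument becomes awkward with stack coefficients.

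\emph{Freeness.} Let $H$ be connected, acting on a vertex object $F$ that is pure in the sense just proved. The Leray spectral sequence $E_2=H^\bullet_H(\mathrm{pt})\otimes H^\bullet(F)\Rightarrow H^\bullet_H(F)$ is considered first with $H$ a torus. Here $H^\bullet(BH)$ is pure Tate, with $H^{2k}$ of weight $2k$, so $E_2^{p,q}$ is pure of weight $w+p+q$. The differential $d_r$ changes total degree by one while preserving weights, so every differential vanishes and the sequence degenerates. Since $E_2$ is free over $H^\bullet_H(\mathrm{pt})$, a filtration with free graded pieces splits, and $H^\bullet_H(F)$ is free. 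For a general connected $H$, reduce to a maximal torus $T_H$ using $H^\bullet_H=(H^\bullet_{T_H})^{W_H}$, where $W_H$ is the Weyl group of $T_H$ in $H$. Freeness over $H^\bullet(BT_H)$ descends to $W_H$-invariants over $H^\bullet(BH)$, because $H^\bullet(BT_H)$ is free over $H^\bullet(BH)$.

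\emph{Base change and summands.} For a subtorus $T'\subset H$, degeneration gives $H^\bullet_H(F)\otimes_{H^\bullet_H}H^\bullet_{T'}\to H^\bullet_{T'}(F)$. Both sides are free with the same $E_2$ term $H^\bullet_{T'}(\mathrm{pt})\otimes H^\bullet(F)$, so this map is an isomorphism; the reduction from $H$ to $T_H$ is handled by the freeness statement above. The properties of being pure of the stated weights, finite free, and compatible with tensor base change all pass to finite direct sums and to equivariant direct summands, because idempotents commute with the spectral sequences and with base change.

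The main obstacle is the first step for nonconstant coefficients on a stacky quotient. There one must check that the identification of the vertex stalk with the primitive part of $\mathbb H^\bullet(\mathcal Y,\overline I)$ respects weights, using the Reynolds-summand description from the proof of Lemma~\ref{lem:cone-calculation}.
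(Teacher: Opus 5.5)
Your proposal is correct and follows essentially the paper's route. You use the Gysin sequence for $U\to\mathcal Y$ plus hard Lefschetz to exhibit $H^q(i_x^*I)$, $q<0$, as a cokernel of $L$ in pure hypercohomology, then duality for the costalk, weight-degeneration of the Borel spectral sequence for freeness, and collapse for base change; in the paper's bookkeeping the cokernel sits in $\mathbb H^{q+1}(\mathcal Y,\overline I)$ with $\overline I$ of weight $w-1$, which is what your two compensating index shifts amount to. The differences are minor: the paper runs the weight argument directly for connected $H$ ($BH$ simply connected, $H_H^\bullet(\mathrm{pt})$ pure Tate), so your detour through a maximal torus and $W_H$-invariants is unnecessary, and it gets base change from Tor-vanishing in the change-of-groups spectral sequence rather than by comparing lifted bases, while your fallback via $i_x^*I\simeq R\Gamma(X,I)$ and opposite weight bounds is a legitimate shortcut that avoids hard Lefschetz and the stack quotient altogether.
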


\begin{proof}
Suppose first that the strict support has positive dimension.  The
descent $\overline I$ to the projective quotient stack has weight $w-1$,
with $I|_U=\pi^*\overline I[1]$.  The descent and projective hard
Lefschetz argument of Lemma~\ref{lem:cone-calculation} applies to this
quotient independently of freeness at the vertex.  Put $L=c_1(\pi)$;
its sign does not matter here.  The torsor Gysin sequence contains
\[
 H^{q-1}(\mathcal Y,\overline I)(-1)
 \xrightarrow{L}H^{q+1}(\mathcal Y,\overline I)
 \longrightarrow H^q(U,I)
 \longrightarrow H^q(\mathcal Y,\overline I)(-1)
 \xrightarrow{L}H^{q+2}(\mathcal Y,\overline I).
\]
For $q<0$ the last arrow is injective by hard Lefschetz.  Thus $H^q(U,I)$
is the cokernel of the first arrow, whose terms are pure of weight
$w+q$.  IC extension at the vertex is the truncation in degrees $q<0$
of the punctured-neighbourhood cohomology.  Contraction identifies this
with the displayed calculation on $U$.  Strict support gives zero
stalk cohomology in degrees $q\geq0$.  A point-supported summand is
immediate.  Duality gives costalk purity.

We use algebraic approximations to the Borel construction in the given
mixed equivariant realization, so the spectral sequence below respects
weights.  For connected $H$ the action on ordinary vertex cohomology is trivial,
and the Borel spectral sequence has
\[
 E_2^{p,q}=H_H^p(\mathrm{pt})\otimes H^q(i_x^{!,*}I).
\]
The coefficient ring is polynomial in even degrees and pure of weight
$p$ in degree $p$; a unipotent radical does not change it.
Terms of total degree $p+q$ have weight $w+p+q$.  Every differential
raises that degree by one and preserves weights, so vanishes.
Homogeneous lifts of a basis of ordinary cohomology give a free
equivariant basis: the resulting map from a free module is an
isomorphism on the associated graded of this spectral sequence.

The change-of-groups spectral sequence computes derived tensor product
with the coefficient ring of the subtorus.  Freeness kills higher Tor,
so its edge map is the ordinary tensor-product isomorphism, naturally
in the vertex object.  Direct sums and direct summands preserve these
properties.
\end{proof}

\subsection{The residual algebra and its normal-fibre module}

Fix $\chi=(\sigma,r)$ with $r\ne0$ and set
\[
 C_\chi=Z_G(\sigma)\times\mathbb C^\times,\qquad
 X_\chi=\mathfrak g_{\sigma,2r}.
\]
The action is $(g,a)v=a^{-2}\operatorname{Ad}(g)v$.
Let $K=K_{\sigma,r}$ be the localized Springer direct image of
\cite[equations~(2.11)--(2.14)]{Solleveld}.  Let $A=T_{\sigma,r}$ be the
smallest algebraic torus whose Lie algebra contains $\chi$ and put
$D=C_\chi/A$.  Write $M_{\rm cusp}$ for the cuspidal Levi of the block,
distinct from the Langlands Levi.

Fix $x\in X_\chi$ and put
\[
 H=C_{\chi,x}^{\circ},\qquad D_x^\circ=H/A,\qquad
 \Gamma=\pi_0(C_{\chi,x}).
\]
The full stabilizer acts on the normal fibres; before specialization
its component action can be semilinear.
The global algebra and point modules use different coefficient rings:
\[
 R_\chi=H_{C_\chi}^\bullet(\mathrm{pt}),\qquad
 R_x=H_H^\bullet(\mathrm{pt}),\qquad R_\chi\longrightarrow R_x.
\]
For a cocharacter $\eta$ of $H$, completion and evaluation on
$\chi+t(\eta,0)$ define compatible maps
\[
 \phi_\eta:\widehat R_\chi\longrightarrow\A,\qquad
 \psi_{\eta,x}:\widehat R_x\longrightarrow\A.
\]
When applying component projectors we require that $\psi_{\eta,x}$
be $\Gamma$-invariant.  This is a condition on the homomorphism of
equivariant coefficient rings, not on a chosen Lie-algebra representative
of the arc.  For the Langlands arcs, Proposition
\ref{prop:two-component-groups} verifies it by restricting to a reductive
Levi factor of the point stabilizer.  Restriction removes the connected
unipotent radical without changing equivariant cohomology or component
groups; the central direction is fixed in that reductive model.

Choose a decomposition-theorem splitting.  After descent to $D$, set
\[
 \overline K_D=\bigoplus_a{}^pH^aK_D,\quad
 \mathcal B_D=\bigoplus_m\Ext_D^m(\overline K_D,\overline K_D),\quad
 \mathcal E_\chi^{\rm res}
 =\mathbb C_0\otimes_{H_D^\bullet(\mathrm{pt})}\mathcal B_D.
\]
Also set $\overline K=\bigoplus_a{}^pH^aK$,
$\mathcal B=\Ext^\bullet_{C_\chi}(\overline K,\overline K)$ and
\[
 \mathcal B_\eta=\A\otimes_{\widehat R_\chi,\phi_\eta}
       (\widehat R_\chi\otimes_{R_\chi}\mathcal B).
\]
The normal lattices and their map are
\begin{equation}\label{eq:normal-arc-lattices}
 \mathsf L_{!,*}^{\eta}
 =\A\otimes_{\widehat R_x,\psi_{\eta,x}}
   H_H^\bullet(\mathsf N_x^{!,*}K)^\wedge_\chi,\qquad
 c^\eta:\mathsf L_!^\eta\longrightarrow\mathsf L_*^\eta.
\end{equation}
Degree statements use the chosen perverse regrading.  There is no
assertion of $C_\chi$-equivariance on the point.

\begin{lemma}[Residual grading at a nonzero central character]
\label{lem:residual-grading}
The torus $A$ is central in $C_\chi$ and fixes the localized incidence
diagram and its coefficient, so $K$ descends to a $D$-equivariant complex.
There is an ungraded algebra isomorphism
\[
 \mathcal H_\chi:=
 \mathbb H(G,M_{\rm cusp},q\mathcal E)/\mathfrak m_{(\sigma,r)}
 \xrightarrow{\sim}\mathcal E_\chi^{\rm res}.
\]
If $\overline K_D=\bigoplus_\delta V_\delta\otimes\IC_\delta$, then
\[
 (\mathcal E_\chi^{\rm res})_0
   =\prod_\delta\operatorname{End}(V_\delta),\qquad
 \operatorname{rad}\mathcal E_\chi^{\rm res}
   =(\mathcal E_\chi^{\rm res})_{>0}.
\]
The algebra $\mathcal B_\eta$ acts on the normal lattices and their map.
Its special fibre is $\mathcal E_\chi^{\rm res}$.
If the normal vertex objects satisfy
Lemma~\ref{lem:normal-purity-base-change}, the lattice special fibres
carry compatible residual gradings: an algebra element of degree $m$
raises module degree by $m$.
\end{lemma}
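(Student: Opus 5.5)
I will prove the four assertions of Lemma~\ref{lem:residual-grading} in order: descent to $D$, the ungraded isomorphism, the structure of the residual grading, and the action on the normal lattices.

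\emph{Descent.} The torus $A=T_{\sigma,r}$ is the Zariski closure of the one-parameter subgroup generated by $\chi=(\sigma,r)$. Since $\sigma$ is central in $Z_G(\sigma)$ and the $\mathbb C^\times$ factor is central, $A$ is central in $C_\chi$. It acts on $X_\chi=\mathfrak g_{\sigma,2r}$ through $(g,a)v=a^{-2}\operatorname{Ad}(g)v$. On $\mathfrak g_{\sigma,2r}$ the element $\sigma$ acts by the scalar $2r$, so the infinitesimal action of $\chi$ is $2r-2r=0$. Hence $A$ acts trivially on $X_\chi$. The same eigenvalue computation applies on the localized incidence variety of \cite[(2.11)--(2.14)]{Solleveld}. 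On the cuspidal coefficient $A$ acts through a central character, which I expect to be trivial because $A\subset Z(M_{\rm cusp})^\circ\times\mathbb C^\times$ acts trivially on the cuspidal local system. Because $A$ is a central torus acting trivially, $K$ descends to $D=C_\chi/A$. Moreover
\[
 \Ext_{C_\chi}^\bullet(\overline K,\overline K)\simeq H_A^\bullet(\mathrm{pt})\otimes\mathcal B_D
\]
as algebras, since $C_\chi\simeq A\times D$ up to isogeny and rational coefficients ignore the finite kernel.

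\emph{The ungraded isomorphism.} By Lusztig's localization theorem, in the form of \cite[Theorem~3.2]{Solleveld}, $\mathcal B$ completed at $\chi$ is the completion of $\mathbb H(G,M_{\rm cusp},q\mathcal E)$ at the central character $\mathfrak m_{(\sigma,r)}$. Reducing modulo this maximal ideal gives $\mathcal H_\chi$. Using the tensor decomposition above, reduction modulo $\mathfrak m_\chi$ splits into two steps. First, the maximal ideal of $H_A^\bullet$ at $\chi$ is quotiented out. Second, what remains is the maximal ideal of $H_D^\bullet(\mathrm{pt})$ at the image of $\chi$ in $\operatorname{Lie}D$, and that image is $0$ by construction of $A$. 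The result is $\mathbb C_0\otimes_{H_D^\bullet}\mathcal B_D=\mathcal E_\chi^{\rm res}$.

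This step is the main obstacle. The difficulty is that the inhomogeneous specialization at $\chi$ must become homogeneous specialization at $0$. It requires two checks: that $H_{C_\chi}^\bullet\to H_A^\bullet\otimes H_D^\bullet$ is compatible with the completion, and that finite components of $A\cap D$-lifts do not interfere. I will handle both by passing to invariants under the finite kernel.

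\emph{The residual grading.} Because $\overline K_D$ is a direct sum of shifted IC complexes $V_\delta\otimes\IC_\delta$, perverse regrading makes $\mathcal B_D$ nonnegatively graded. Its degree-zero part is $\bigoplus_\delta\operatorname{End}(V_\delta)$, since $\operatorname{Hom}(\IC_\delta,\IC_{\delta'})=\mathbb C^{\delta=\delta'}$ and negative Ext between perverse sheaves vanishes. The finiteness of $\mathcal E^{\rm res}_\chi$ follows from the finiteness of $\mathcal B_D$ over $H_D^\bullet$, which holds because there are finitely many $D$-orbits. Tensoring with $\mathbb C_0$ does not change degree zero, because the augmentation ideal is concentrated in positive degrees. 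The positive part is a graded ideal and is nilpotent in a finite-dimensional nonnegatively graded algebra, so it lies in the radical. The quotient by it is semisimple, so the radical is exactly $(\mathcal E_\chi^{\rm res})_{>0}$.

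\emph{Action on the lattices.} The complex $\overline K$ acts on $\mathsf N_x^{!,*}\overline K$ by functoriality. Restricting equivariance from $C_\chi$ to $H$ gives an action of $\mathcal B$ over $R_\chi\to R_x$. The naturality of \eqref{eq:normal-slice-arrow} makes $c_x^N$ commute with this action. Completing and base changing along $\phi_\eta$ and $\psi_{\eta,x}$, which are compatible, gives the $\mathcal B_\eta$-action, and its special fibre is $\mathcal E_\chi^{\rm res}$ by the previous step. When Lemma~\ref{lem:normal-purity-base-change} holds, the $H$-modules are free. Their reductions are then computed as $\mathbb C_0\otimes_{H^\bullet_{D_x^\circ}}H^\bullet_{D_x^\circ}(\cdot)$, where the $D_x^\circ$-equivariant descent is again possible because $A$ acts trivially. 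These reductions are graded. Since Ext-classes of degree $m$ induce degree-$m$ maps on hypercohomology, the asserted compatibility of gradings follows.
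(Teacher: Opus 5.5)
Your argument follows the paper's proof essentially step for step. The shared steps are: triviality of $A$ on $X_\chi$ and on the localized incidence variety; the factorization $\mathcal B\simeq H_A^\bullet(\mathrm{pt})\otimes\mathcal B_D$ with $\chi=(\chi_A,0)$, so that evaluation at $\chi$ becomes augmentation of $H_D^\bullet(\mathrm{pt})$; nonnegativity of the perverse regrading and Schur's lemma in degree zero; the positive ideal as the radical; and naturality of the orbit adjunction, together with $H_H^\bullet\simeq H_A^\bullet(\mathrm{pt})\otimes M_{!,*}$, for the lattices.

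A few smaller points. Your finite-dimensionality argument uses finiteness of $\mathcal B_D$ over $H_D^\bullet(\mathrm{pt})$, via finitely many orbits. This is a valid substitute for the paper's appeal to finiteness of the Hecke algebra over its centre. For the completed comparison, the paper cites Solleveld's Proposition~2.3 and Theorem~2.5 rather than Theorem~3.2. Those results concern $\Ext^\bullet(K,K)$, so you also need the decomposition-theorem splitting to identify that algebra with $\mathcal B=\Ext^\bullet(\overline K,\overline K)$ as ungraded algebras. The paper states this explicitly; you only use it implicitly.

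The one step that fails as written is your reason for triviality on the cuspidal coefficient. The containment $A\subset Z(M_{\rm cusp})^\circ\times\mathbb C^\times$ is false outside the principal case, because $\sigma$ carries a component $rh_v$ in the derived Levi coming from the cuspidal nilpotent $v$. For instance, take the cuspidal pair on the regular nilpotent orbit of $M_{\rm cusp}=G=\mathrm{SL}_2$. There $Z(M_{\rm cusp})^\circ=1$ but $\sigma=rh\neq0$, so $A$, the closure of $\exp\mathbb C(rh,r)$, is not contained in $\{1\}\times\mathbb C^\times$. The correct argument, which is the one the paper uses, is connectedness. $A$ fixes every point of the localized incidence diagram. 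The stabilizer of a point acts on the stalk of an equivariant local system only through its component group. Since $A$ is connected, it therefore acts trivially on the coefficient. With that replacement your proof is complete.
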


\begin{proof}
\emph{The global algebra.}
The Zariski closure of $\exp(\mathbb C\chi)$ is central in $C_\chi$.
The fixed-incidence description in \cite[Section~2.1]{Solleveld} shows
that it fixes the localized diagram.  It acts trivially on the
constructible coefficient by connectedness, as in
\cite[Lemma~2.7, equations~(2.23)--(2.24)]{Solleveld}.
A complementary reductive factor up to finite central isogeny gives
\[
 \mathcal B\simeq H_A^\bullet(\mathrm{pt})\otimes\mathcal B_D.
\]
Finite kernels and component invariants cause no higher cohomology
with complex coefficients.

The splitting regrades $\Ext^n_{C_\chi}(K,K)$: the component between
perverse cohomology indices $a$ and $b$ has degree $n+a-b$.
The underlying algebra is unchanged.
The completed comparison in
\cite[Proposition~2.3 and Theorem~2.5]{Solleveld}, followed by quotient
by the maximal ideal, identifies the exact central fibre with
$\mathbb C_\chi\otimes_{R_\chi}\mathcal B$.  In the displayed
factorization $\chi=(\chi_A,0)$; evaluating the first factor and
augmenting the second gives $\mathcal E_\chi^{\rm res}$.  It also gives
$\mathbb C\otimes_\A\mathcal B_\eta=\mathcal E_\chi^{\rm res}$.

Negative Ext between perverse sheaves vanishes, so the residual grading
is nonnegative.  The algebra is finite dimensional because the Hecke
algebra is finite over its centre.  Schur's lemma gives its degree-zero
part.  The positive-degree ideal is nilpotent with semisimple quotient,
and hence is exactly the Jacobson radical.

\smallskip
\emph{The point module.}
Restriction along the orbit followed by its fibre functor takes a
morphism $K\to K[m]$ to a morphism of normal fibres.  Naturality of
\eqref{eq:normal-fibre-map} makes the normal map commute with this
action.  The target equivariance is $C_{\chi,x}$; on connected
equivariant cohomology the action is linear over $R_\chi\to R_x$, as in
\cite[equations~(3.15)--(3.16)]{Solleveld}.  Thus
\eqref{eq:normal-arc-lattices} is $\mathcal B_\eta$-linear.

Since $A\subset H$ acts trivially on the normal objects, in the
perverse regrading one has
\[
 H_H^\bullet(\mathsf N_x^{!,*}K)
 \simeq H_A^\bullet(\mathrm{pt})\otimes M_{!,*},\qquad
 M_{!,*}=\bigoplus_a
 H_{D_x^\circ}^\bullet(\mathsf N_x^{!,*}{}^pH^aK_D).
\]
The fibre functor here includes restriction from $D$ to $D_x^\circ$.
The residual arc in the latter group is $t\bar\eta$, and maps to the
same arc in $D$ used for the algebra.  Consequently
\[
 \mathsf L_{!,*}^\eta/t\mathsf L_{!,*}^\eta
 \simeq\mathbb C_0\otimes_{H_{D_x^\circ}^\bullet(\mathrm{pt})}M_{!,*}.
\]
It is the \emph{stabilizer} coefficient ring that is augmented.
The global augmentation ideal restricts into this ideal, so the
action factors through $\mathcal E_\chi^{\rm res}$.
Lemma~\ref{lem:normal-purity-base-change} supplies freeness and base
change.  The arc is homogeneous for $\deg t=2$, and the perverse Ext
degree gives the claimed degree of the action.

If $c^\eta(v)=t^p w$ and $b\in\mathcal B_\eta$, then
$c^\eta(bv)=t^pbw$.  Each inverse image defining the Smith filtration is
therefore stable, and its reduction and quotients carry the specialized
algebra action.  If $\psi_{\eta,x}$ is component-invariant, the component
action becomes $\A$-linear and commutes with this construction.
\end{proof}

\begin{unnumberedremark}[Provenance of the residual-grading argument]
The formal principle used here is classical: if a finite-dimensional algebra
$E=\bigoplus_{m\geq0}E_m$ is nonnegatively graded and $E_0$ is semisimple,
then $\operatorname{rad}E=E_{>0}$; consequently, a filtration raised by
$E_{>0}$ has semisimple associated graded pieces.  This is the familiar
grading--radical mechanism underlying Koszul and mixed-category arguments;
compare \cite[Section~2.1]{BGS}.  In the present setting, Solleveld's
completed geometric realization supplies the Ext algebra, while the local
Smith--Lefschetz calculation is the one appearing in Grojnowski's model
\cite[Section~2.3]{GrojnowskiJantzen}.  The additional step specific to this
argument is the residual quotient by $T_{\sigma,r}$.  It is needed because
evaluation at the nonzero parameter $(\sigma,r)$ is inhomogeneous and does
not preserve the original equivariant grading; quotienting by
$T_{\sigma,r}$ moves the remaining parameter to the origin and produces the
grading on $\mathcal E_\chi^{\rm res}$ used below.
\end{unnumberedremark}

Retain the normal lattices \eqref{eq:normal-arc-lattices} and put
$\mathcal E=\mathcal E_\chi^{\rm res}$.  Assume that $\eta$ contracts a
transverse slice at $x$ and that its normalized IC restrictions are pure
and polarizable.  Equation~\eqref{eq:normal-slice-arrow} and
Lemma~\ref{lem:normal-purity-base-change} identify the normal map, after
residual base change, with the vertex map for the contracting action.
The next hypothesis records what the filtration argument uses.

\begin{hypothesis}[Local cone compatibility]
\label{hyp:local-cone-compatibility}
Summand by summand, restriction to the chosen normal slice and pullback
along $\chi+t(\eta,0)$ identify $c^\eta$ with the completed cone map of
Lemma~\ref{lem:cone-calculation}.
\end{hypothesis}

Under the purity assumptions above, Lemma~\ref{lem:normal-purity-base-change}
already makes $\mathsf L_!^\eta$ and $\mathsf L_*^\eta$ finite free over
$\A$.  Their ranks agree because the cone map is generically invertible.
These are consequences, not additional hypotheses.

Under this hypothesis, put
\[
 \overline{\mathsf L}_!^\eta=
 \mathsf L_!^\eta/t\mathsf L_!^\eta
\]
and, for $p\geq0$, define
\[
 F^p\overline{\mathsf L}_!^\eta=
 \frac{(c^\eta)^{-1}(t^p\mathsf L_*^\eta)+
       t\mathsf L_!^\eta}{t\mathsf L_!^\eta}.
\]

Choose a decomposition-theorem splitting
$K\simeq\bigoplus_k{}^pH^k(K)[-k]$.  The resulting perverse regrading gives
compatible gradings on $\mathcal E$ and on the special fibres of the
costalk and stalk.  The transported module grading is called the residual
Grojnowski grading.  Write $H_G^j$ for degree $j$ in this grading.

\begin{proposition}[Ext/convolution equivariance and semisimplicity]
\label{prop:ext-equivariance}
Assume Hypothesis~\ref{hyp:local-cone-compatibility}.
The algebra $\mathcal E$ acts on
$\overline{\mathsf L}_!^\eta$, and every term of its Smith filtration is
$\mathcal E$-stable.  Moreover,
\[
 F^p\overline{\mathsf L}_!^\eta=
 \bigoplus_{j\geq p}H_G^j(\overline{\mathsf L}_!^\eta).
\]
Consequently every associated-graded layer is annihilated by the radical
$\mathcal E_{>0}$ and is semisimple for the specialized graded affine Hecke
algebra, with multiplicities given by IC costalk coefficients.
\end{proposition}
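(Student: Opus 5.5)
The proof has three steps: stability of the filtration, identification of the Smith filtration with the degree filtration, and semisimplicity of the layers.

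\emph{Stability.} By Lemma~\ref{lem:residual-grading}, $\mathcal B_\eta$ acts $\A$-linearly on $\mathsf L_!^\eta$ and $\mathsf L_*^\eta$ and commutes with $c^\eta$. If $c^\eta(v)=t^pw$, then $c^\eta(bv)=t^p\,bw$ for every $b\in\mathcal B_\eta$. Hence each inverse image $(c^\eta)^{-1}(t^p\mathsf L_*^\eta)$ is $\mathcal B_\eta$-stable. Reducing modulo $t$, and using $\mathbb C\otimes_\A\mathcal B_\eta=\mathcal E$, shows that every $F^p$ is $\mathcal E$-stable.

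\emph{Degree filtration.} I would work one summand at a time, as Hypothesis~\ref{hyp:local-cone-compatibility} allows. Fix a summand $V_\delta\otimes\IC_\delta$ of the regraded $\overline K_D$. By the hypothesis, together with \eqref{eq:normal-slice-arrow} and the base change of Lemma~\ref{lem:normal-purity-base-change}, the corresponding part of $c^\eta$ is the completed pullback of the graded cone map $c:\mathsf L_!\to\mathsf L_*$ of Lemma~\ref{lem:cone-calculation} for the contracting torus, with $z\mapsto c_\eta t$. Apply Lemma~\ref{lem:graded-smith} with the source regraded by $\Delta_c$. This gives homogeneous bases with $c(e_i)=z^{a_i}f_i$ and $|e_i|_{G,!}=|f_i|_{G,*}+2a_i$.

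The key input is the placement of the strings. By Lemma~\ref{lem:cone-calculation}, the cokernel is $\mathbb H^\bullet(\mathcal Y,\overline I)[1]$, and hard Lefschetz decomposes it into strings $\mathbb C[z]/(z^{a})$ centred at degree zero. The bottom vector of a length-$a$ string therefore sits in cokernel degree $-a$. So $|f_i|_{G,*}=-a_i$ and $|e_i|_{G,!}=a_i$; to obtain exactly this, the stalk regrading has to be normalized so that the Lefschetz centre is at zero.

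The exponent-zero blocks are the head of the summand. They are point-supported on the slice, so the lower map is primitive and these generators lie in degree $0$. Thus the Grojnowski degree of each basis vector equals its Smith exponent, and Lemma~\ref{lem:ed} gives $F^p=\bigoplus_{j\ge p}H^j_G$ summand by summand. Since the splitting is a direct sum, the identity holds globally.

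I expect this to be the main obstacle. Two separate normalizations have to be matched: the perverse regrading of each summand (the shift by $k$ in ${}^pH^k$), and the additive constants $\Delta_c$ and the torsor shift $[1]$. Both must be arranged so that the Lefschetz symmetry of every summand is centred at the same degree zero. I would first check this on the case $X=\mathbb A^d$, $I=\mathbb C[d]$, using the explicit degree count after Lemma~\ref{lem:cone-calculation}. The general case would then follow from the self-duality $\mathbb D\IC_\delta=\IC_{\delta^\dagger}$, which exchanges costalk and stalk and forces the symmetric placement.

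\emph{Semisimplicity.} By Lemma~\ref{lem:residual-grading}, an element of $\mathcal E_m$ raises the residual Grojnowski degree by $m$. Combined with the previous step, $\mathcal E_{>0}F^p\subset F^{p+1}$, so $\mathcal E_{>0}$ kills every $\Gr_F^p$. Lemma~\ref{lem:residual-grading} also gives $\mathcal E_{>0}=\operatorname{rad}\mathcal E$ and $\mathcal E/\operatorname{rad}\mathcal E=\prod_\delta\operatorname{End}(V_\delta)$. Each layer is therefore a module over a semisimple algebra, hence semisimple for $\mathcal H_\chi\simeq\mathcal E$.

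The multiplicity of the simple module $V_\delta$ in layer $p$ equals the dimension of the degree-$p$ part of the $\IC_\delta$-contribution to the costalk. That is the claimed IC costalk coefficient.
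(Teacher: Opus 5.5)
Your proof is correct and follows the paper's overall plan. Stability comes from $\mathcal B_\eta$-linearity together with centrality of $t$. You then reduce summand by summand to the cone map of Lemma~\ref{lem:cone-calculation} and use the balance $|e|_{G,!}=|f|_{G,*}+2a$ with Lefschetz bottoms in cokernel degree $-a$. The point-supported head sits in degree zero, and $\operatorname{rad}\mathcal E=\mathcal E_{>0}$ kills each layer.

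Where you differ is the passage from Smith exponents to the filtration.
\begin{itemize}
\item You fix a global homogeneous Smith basis by Lemma~\ref{lem:graded-smith} and read $F^p$ off Lemma~\ref{lem:ed}. This tacitly uses that a graded torsion $\mathbb C[z]$-module determines the multiset of lengths and shifts of its cyclic summands. That uniqueness is what forces every Smith block of exponent $a\ge1$ to have its target generator at a Lefschetz bottom in degree $-a$.
\item The paper deliberately avoids a basis. It builds the functorial isomorphism $\Gr_F^a\cong\operatorname{Prim}_a(\mathcal C)$ of \eqref{eq:primitive-string-quotient}, which is exactly that uniqueness statement made canonical. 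It then proves $F^p=\bigoplus_{j\ge p}H^j_G$ degree by degree.
\end{itemize}
Your version is shorter and legitimate here. Lemma~\ref{lem:normal-purity-base-change} already gives graded freeness over $\mathbb C[z]$ before completion, and the conclusion is basis-independent. The paper's version gives a canonical, $\mathcal E$-linear identification of each layer with primitive Lefschetz classes. It would also survive if freeness were known only after completion.

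Two smaller remarks.
\begin{itemize}
\item The normalization you single out as the main obstacle is automatic once each summand is perverse-normalized. Perversity of $\overline I$ centres hard Lefschetz at $\mathbb H^0(\mathcal Y,\overline I)$. Because of the torsor shift $[1]$, this centre sits in degree $-1$ of $\mathcal C$, and that is exactly why a length-$a$ string has its bottom at $-a$. If you instead renormalized the stalk so that the cokernel strings themselves are centred at $0$, the exponent-$a$ generators would land in degree $a+1$. No self-duality argument is needed.
\item You should justify that exponent-zero blocks occur only on point-supported summands. On a summand with positive-dimensional support on the slice, the nonequivariant costalk and stalk at the vertex are concentrated in degrees $>0$ and $<0$ respectively. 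Hence $c\bmod z=0$ there.
\end{itemize}
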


\begin{proof}
Retain the notation $\mathsf L_!^\eta,\mathsf L_*^\eta$, and $c^\eta$ from
the statement.  To reduce notation in the proof, write
\[
 \mathsf L_!=\mathsf L_!^\eta,\qquad
 \mathsf L_*=\mathsf L_*^\eta,\qquad
 c=c^\eta,\qquad z=t.
\]
If the slice is a point, the normal map is the identity and the assertion
is immediate, with all exponents zero.  Assume henceforth that the slice
has positive dimension.  Here the last equality is a choice of coordinate, not an identification of
the multivariable coefficient ring with $\mathbb C[[t]]$.  More explicitly,
let $A_\eta\cong\mathbb C^\times$ be the effective one-dimensional torus
through which the contracting action factors, and let
$z\in H_{A_\eta}^2(\mathrm{pt})$ be the first Chern class of its positive
character.  Pullback along the arc sends $z$ to a nonzero scalar multiple of
$t$.  Rescaling the formal coordinate by that scalar does not change any
Smith exponent or filtration.  Normalize the coordinate so that $z\mapsto t$,
and assign $t$ residual degree two.

\smallskip
\noindent\emph{Convolution equivariance.}
Use the equivariant Ext algebra of $K$ already defined in
Lemma~\ref{lem:residual-grading}.  Its multiplication is composition
of derived morphisms.  Applying either normal-fibre functor therefore
gives an action, and naturality of the orbit adjunction makes the
normal map equivariant for this action.  This formulation retains the
cuspidal coefficient system and its duality; it does not replace a
coefficient-dependent convolution kernel by ordinary Borel--Moore
homology of the underlying correspondence.  After completion at $\chi$,
\cite[Theorem~2.5]{Solleveld} identifies this action with the action of the
completed graded affine Hecke block.  A chosen
splitting of $K$ identifies the component from ${}^pH^a(K)$ to
${}^pH^b(K)$ inside $\operatorname{Ext}^m(K,K)$ with
\[
 \operatorname{Ext}^{m+a-b}({}^pH^a(K),{}^pH^b(K)).
\]

Keep the full $C_\chi$-equivariance until the exact central fibre is taken.
Lemma~\ref{lem:residual-grading} then identifies that fibre with
$\mathcal E$ and supplies its nonnegative residual grading.  Reindexing by
the perverse degrees is performed on this residual algebra; the resulting
grading is not the original Borel--Moore grading specialized at $\chi$.

The notation $H_G^j$ denotes the degree-$j$ part of the special
fibre of the regraded normal module.  Its connected-stabilizer model is
the sum of the equivariant normal fibres of the perverse summands,
with the stabilizer coefficient ring augmented as in
Lemma~\ref{lem:residual-grading}.  The stalk is treated in the same way.  Naturality of the orbit adjunction gives, for
$\varphi:K\to K[m]$, the square
\[
\begin{array}{ccc}
 \mathsf N_x^!K&\xrightarrow{c_x^N}&\mathsf N_x^*K\\
 \downarrow{\scriptstyle\mathsf N_x^!\varphi}&&
 \downarrow{\scriptstyle\mathsf N_x^*\varphi}\\
 \mathsf N_x^!K[m]&\xrightarrow{c_x^N}&\mathsf N_x^*K[m].
\end{array}
\]
This is a square in the stabilizer-equivariant category.  Thus the
normal arrow is linear before regrading.  Completion and pullback along
$\phi_\eta$ make it a $\mathcal B_\eta$-linear arrow over $\A$.  Since the
one-variable parameter $z=t$ is central, $\mathcal B_\eta$ preserves every inverse
image $c^{-1}(z^p\mathsf L_*)$.  After reduction modulo $z$, the acting
algebra becomes
$\mathbb C\otimes_\A \mathcal B_\eta\cong\mathcal E$; consequently the reductions
of those inverse images and all Smith quotients are graded
$\mathcal E$-modules.  The grading is homogeneous because the family
projects to the arc $t\bar\eta$ in the residual group, as in
Lemma~\ref{lem:residual-grading}.

\smallskip
\noindent\emph{Smith layers and primitive strings.}
Apply Lemma~\ref{lem:cone-calculation} to each IC summand on the positive
normal slice through $x$; write $I$ for that summand,
$\mathcal Y$ for its projectivized punctured slice, and $\overline I$ for its
descent to $\mathcal Y$.  Suppress the summand index.  The
degree-to-exponent correspondence is obtained by constructing the primitive
quotient, identifying it with the Smith layer, and comparing their degrees.
Restriction to the effective contracting torus identifies the summand of
the completed map with the completed map in
Lemma~\ref{lem:cone-calculation}.  Under pullback to the formal arc its cone
coordinate is a nonzero scalar multiple of $t$; with the normalization at
the start of the proof it is $z=t$.  Consequently the $t$-adic Smith
filtration in the statement is exactly the $z$-adic filtration in the cone
calculation.  The splitting decomposes $c$ and its cokernel accordingly, so
it is enough to work on this fixed summand.
Put $\mathcal C=\operatorname{coker}(c)$.
Retain ordinary equivariant cohomological degrees on $\mathsf L_!$,
$\mathsf L_*$, and
$\mathcal C$, so $\deg z=2$, and write $\Delta_c$ for the homogeneous degree
of $c$.
Thus, whenever $c(u)=z^av$ with $u,v$ homogeneous,
\begin{equation}\label{eq:degree-balance}
 \deg(u)+\Delta_c=2a+\deg(v).
\end{equation}
The shift in the cone calculation is part of this convention:
$\mathcal C\cong\mathbb H^\bullet(\mathcal Y,\overline I)[1]$, so a class in
$\mathbb H^k(\mathcal Y,\overline I)$ has degree $k-1$ in $\mathcal C$.
Up to the translation fixed by placing the smooth point-supported summand
at $x$ in degree zero, the compatible Grojnowski degrees on source and
target are
\begin{equation}\label{eq:grojnowski-source-degree}
 |u|_{G,!}=\deg(u)+\Delta_c,
 \qquad |v|_{G,*}=\deg(v).
\end{equation}
Thus $c$ has degree zero for the transported grading; the same
translation is used on every perverse summand.

\smallskip
\noindent\emph{The primitive quotient.}
For $a\geq1$ define the functorial quotient
\begin{equation}\label{eq:primitive-string-quotient}
 \operatorname{Prim}_a(\mathcal C)=
 \frac{\ker(z^a:\mathcal C\to\mathcal C)}
 {\ker(z^{a-1}:\mathcal C\to\mathcal C)+
  z\ker(z^{a+1}:\mathcal C\to\mathcal C)}.
\end{equation}
Before completion put $S=\mathbb C[z]$; after completion one may replace it
by $\mathbb C[[z]]$.  The finite-length quotients below are unchanged, and
$(s)$ denotes a grading shift.  On a cyclic summand $S/(z^b)(s)$ this quotient
vanishes unless $b=a$; for
$b=a$, it is the one-dimensional space generated by the bottom vector of the
string.  Thus $\operatorname{Prim}_a(\mathcal C)$ records the Smith summands
of exponent $a$, together with their grading and coefficient action.

\smallskip
\noindent\emph{The Smith-layer map.}
There is a direct map from the $a$-th Smith layer to
\eqref{eq:primitive-string-quotient}.  If
$\bar u\in F^a(\mathsf L_!/z\mathsf L_!)$, choose a lift
$u\in\mathsf L_!$ and write
\[
 c(u)=z^a v,\qquad v\in\mathsf L_*.
\]
Send the class of $\bar u$ in $F^a/F^{a+1}$ to the class of
$\bar v=v+c(\mathsf L_!)$ in $\operatorname{Prim}_a(\mathcal C)$.  To verify
well-definedness, first replace the lift by $u+zh$.  If
$c(u+zh)=z^av'$, then, since $\mathsf L_*$ is $z$-torsion-free,
\[
 c(h)=z^{a-1}(v'-v).
\]
Thus $[v'-v]\in\ker(z^{a-1}:\mathcal C\to\mathcal C)$, the first term in
the denominator of~\eqref{eq:primitive-string-quotient}.  If instead one
changes the class by an element represented by $u'$ with
$c(u')=z^{a+1}v''$, its image changes by $z[v'']$.  Since
$[v'']\in\ker(z^{a+1}:\mathcal C\to\mathcal C)$, this is the second term in
that denominator.  Hence the map is well defined on $F^a/F^{a+1}$.  On a
Smith block
\[
 S e\xrightarrow{\ z^b\ }S f
\]
it is zero for $b\ne a$ and sends $e\bmod z$ to the bottom class of $f$ for
$b=a$.  It is therefore an isomorphism
\begin{equation}\label{eq:smith-primitive-isomorphism}
 \operatorname{gr}_F^a(\mathsf L_!/z\mathsf L_!)
 \xrightarrow{\sim}\operatorname{Prim}_a(\mathcal C).
\end{equation}
For exponent zero, reduction of $c$ gives the canonical isomorphism
$F^0/F^1\xrightarrow{\sim}\operatorname{im}(c\bmod z)$.  Its grading is
determined as follows.  For a semisimple perverse summand $P$, the image of
$H^\bullet(i_x^!P)\to H^\bullet(i_x^*P)$ is the fibre of the maximal direct
summand of $P$ supported at $x$.  To see this, split off the point-supported
summand.  The map is the identity on that summand.  On an IC summand with
larger support, a nonzero image would produce a point-supported subobject and
quotient, contrary to simplicity.  A point-supported perverse sheaf has
cohomology only in perverse degree zero.  Under
\eqref{eq:grojnowski-source-degree}, this is Grojnowski degree zero.  Hence
the exponent-zero Smith quotient is concentrated in degree zero.

\smallskip
\noindent\emph{The degree comparison.}
Apply hard Lefschetz to
$\mathcal C\cong\mathbb H^\bullet(\mathcal Y,\overline I)[1]$.  A primitive
vector $p$ in hypercohomology degree $-\ell$ generates the string
\[
 p,zp,\ldots,z^\ell p,
 \qquad \deg_{\mathbb H}(z^kp)=-\ell+2k,
\]
of length $a=\ell+1$.  Formula~\eqref{eq:primitive-string-quotient} selects
the bottom vector $p$, which has degree $-\ell-1=-a$ in $\mathcal C$, and
\eqref{eq:smith-primitive-isomorphism} puts its corresponding lattice
generator $u$ in Smith layer $a$.  Choose a homogeneous lift
$v\in\mathsf L_*$ of $p$.  Then~\eqref{eq:degree-balance} gives
\[
 \deg(u)+\Delta_c=2a-a=a,
 \qquad\text{hence}\qquad |u|_{G,!}=a.
\]
Thus the degree of $c$ and the shift in the cone calculation together give
the asserted index.

\smallskip
\noindent\emph{Equality of filtrations.}
It suffices to pass from concentration of the quotients to equality of the
filtrations without assuming a global Smith basis.  The filtration $F$ is
graded, and the preceding calculation identifies
$\operatorname{gr}_F^a$ with a primitive quotient concentrated in
Grojnowski degree $a$; the exponent-zero quotient is concentrated in degree
zero.  For each fixed degree $j$, the induced finite filtration on
$H_G^j(\mathsf L_!/z\mathsf L_!)$ is exhaustive and separated and can
therefore have a nonzero quotient only at $a=j$.  It follows degree by
degree that
\[
 F^p\cap H_G^j=
 \begin{cases}
  H_G^j,&p\leq j,\\
  0,&p>j.
 \end{cases}
\]
Indeed, before the unique possible jump at $p=j$ the filtration is still
exhaustive on $H_G^j$, and after that jump separatedness forces it to be
zero.  Summing over all degrees gives
\[
 F^p(\mathsf L_!/z\mathsf L_!)=
 \bigoplus_{j\geq p}H_G^j(\mathsf L_!/z\mathsf L_!).
\]
Both
\eqref{eq:primitive-string-quotient} and
\eqref{eq:smith-primitive-isomorphism} are expressed using only kernels,
images, and multiplication by the central element $z$, so this identification
is canonical.

\smallskip
\noindent\emph{The radical and semisimplicity.}
For $e\in\mathcal E_m$, choose, as in
Lemma~\ref{lem:residual-grading}, a homogeneous lift
$\widetilde e\in(\mathcal B_D)_m$.  Its image in the one-variable algebra
$\mathcal B_\eta$ commutes with $c$ and $z$, so it acts
on~\eqref{eq:primitive-string-quotient}.  Changing $\widetilde e$ changes
the operator by a multiple of $t=z$ and hence has no effect on the special
fibre.  Thus the construction of~\eqref{eq:smith-primitive-isomorphism} is
canonically $\mathcal E$-linear.  The equality of filtrations just proved,
together with the fact that $\mathcal E_m$ raises residual Grojnowski degree
by $m$, gives
\[
 \mathcal E_mF^p\subset F^{p+m}.
\]
Consequently $\mathcal E_{>0}F^p\subset F^{p+1}$, so the positive-degree
ideal kills every associated-graded Smith layer.

Lemma~\ref{lem:residual-grading} gives
\[
 \mathcal E_0\cong\prod_\delta\operatorname{End}(V_\delta),
 \qquad \operatorname{rad}\mathcal E=\mathcal E_{>0}.
\]
By the string calculation above, this ideal acts trivially on every
$\operatorname{gr}^p_F$.  Each layer is therefore an
$\mathcal E_0$-module and is semisimple.  The splitting identifies the
multiplicity space of the simple factor indexed by $\delta$ with
$H_G^p(\mathsf N_x^!\IC_\delta)$, with the normal shifts and twists.
Through the central-fibre identification in
Lemma~\ref{lem:residual-grading}, this is the required
semisimplicity and IC multiplicity statement for the specialized graded
affine Hecke action.
\end{proof}

\section{The fixed-slice comparison}
\label{sec:fixed-slice-comparison}

This section applies Section~\ref{sec:local-smith} to the graded Slodowy
slice attached to an enhanced parameter.  It compares the analytic and
geometric conventions, constructs the positive fixed slice, retains the
component-group enhancement, and identifies the completed geometric family
with Langlands induction.  The final subsection proves the main theorem.

\subsection*{Analytic versus geometric standard-module conventions}
Let $y\in\mathfrak g$ be nilpotent, let $r>0$, and let
$\sigma_{\mathrm{an}}\in\mathfrak g$ be semisimple with
$[\sigma_{\mathrm{an}},y]=-2ry$.  Let $\rho$ be an irreducible
representation of $\pi_0Z_G(\sigma_{\mathrm{an}},y)$ whose associated local
system occurs in the chosen generalized-Springer block.  Denote by
$X^{\rm an}_{y,\sigma_{\rm an},r,\rho}$ the standard module in the
analytic Langlands convention and
$E_{y,\sigma_{\mathrm g},r,\rho}$ for Solleveld's geometric standard module.
The sheaf-theoretic construction below naturally produces the latter.  Put
$\sigma_{\mathrm g}=-\sigma_{\mathrm{an}}$; then
$[\sigma_{\mathrm g},y]=2ry$.
Let $\operatorname{sgn}^*$ denote Solleveld's sign-twist functor, let
$\operatorname{IM}$ be the Iwahori--Matsumoto involution, and let $\mathbf r$
be the central parameter generator in his universal graded affine Hecke
algebra; $\mathbf r$ specializes to the scalar $r$.

\begin{lemma}[Analytic/geometric transport]
\label{lem:analytic-geometric-transport}
There is a natural identification
\[
 X^{\rm an}_{y,\sigma_{\rm an},r,\rho}
 \simeq \operatorname{IM}^{*}E_{y,\sigma_{\rm g},r,\rho}.
\]
If $X(M,\delta,\nu+t\eta)$ is an analytic Langlands family
and $Q=M$, then
\[
 X(M,\delta,\nu+t\eta)\simeq
 \operatorname{IM}^*\!\left(
  \Hh\otimes_{\Hh_Q}
  \bigl((\operatorname{IM}_Q)^*\delta\bigr)_{-\nu-t\eta}
 \right).
\]
After the coefficient rings of the two arcs are identified by the sign
pullback $f(\lambda)\mapsto f(-\lambda)$, the functor
$\operatorname{IM}^*$ is $\mathbb C[[t]]$-linear.  It transports the
standard-to-contragredient map and its Jantzen filtration exactly; in
particular, it preserves elementary divisors, layers, semisimplicity, and
composition multiplicities.
\end{lemma}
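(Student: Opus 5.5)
The plan is to work entirely on the algebra side, using the explicit effect of the Iwahori--Matsumoto involution on the Bernstein generators, and to reduce the first identification to a comparison already made in Solleveld's work. In the conventions fixed in the introduction, $\operatorname{IM}$ acts by $N_w\mapsto \operatorname{sgn}(w)N_w$ and $\xi\mapsto-\xi$ for $\xi\in\mathfrak t^*$; on the universal algebra it fixes $\mathbf r$. The first step is to check, on the cross relation, that this is an algebra automorphism at $\mathbf r=r$: the relation $\xi N_{s}-N_{s}s(\xi)=\mathbf r\langle\xi,\alpha^\vee\rangle$ is sent to $(-\xi)(-N_s)-(-N_s)(-s(\xi))$, which is the same expression. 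Next I will observe that $\operatorname{IM}$ restricts to $\operatorname{IM}_Q$ on each parabolic subalgebra $\Hh_Q$, with $\Hh_Q\simeq\Hh_{Q,\mathrm{der}}\otimes S_Q$. On the central factor $S_Q$ it acts by $\zeta\mapsto-\zeta$, so it sends the character $\nu+t\eta$ to $-\nu-t\eta$.

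The second identification is then formal. Pulling back an induced module along an automorphism preserving $\Hh_Q$ gives $\operatorname{IM}^*(\Hh\otimes_{\Hh_Q}V)\simeq\Hh\otimes_{\Hh_Q}\operatorname{IM}_Q^*V$, via $h\otimes v\mapsto\operatorname{IM}^{-1}(h)\otimes v$. Applying this to $V=((\operatorname{IM}_Q)^*\delta)_{-\nu-t\eta}$ and using $\operatorname{IM}_Q^2=\mathrm{id}$ returns $\Hh\otimes_{\Hh_Q}\delta_{\nu+t\eta}$, which is $X(M,\delta,\nu+t\eta)$ because $Q=M$. The argument works uniformly over $S_M$, hence after the arc pullback. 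The only point to watch is the coefficient ring: $\operatorname{IM}$ intertwines $\operatorname{ev}_{\nu,\eta}$ with $\operatorname{ev}_{-\nu,-\eta}$ through $f(\lambda)\mapsto f(-\lambda)$, and this is exactly the stated identification. After it, $\operatorname{IM}^*$ is $\A$-linear, with $t$ unchanged.

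For the first identification I will cite Solleveld's comparison of his geometric standard modules with analytic ones. His geometric modules are built with $\sigma_{\rm g}$, so the relation between the Langlands data is $\sigma_{\rm an}=-\sigma_{\rm g}$. Solleveld's theorem relating $E_{y,\sigma,r,\rho}$ to the Langlands classification is stated through $\operatorname{sgn}^*$ or $\operatorname{IM}^*$, which differ by a known twist. The task is to match the tempered inducing module and the sign of the central character. I expect this bookkeeping to be the main obstacle: keeping track of which of $\operatorname{sgn}^*$ and $\operatorname{IM}^*$ appears, and confirming that $\rho$ is unchanged rather than replaced by $\rho^\vee$. I would settle it by checking the rank-one case against Appendix~\ref{app:rank-one}, and by checking that weights are negated, since $\operatorname{IM}$ negates $\mathfrak t^*$.

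Finally, to show that $\operatorname{IM}^*$ transports the Jantzen filtration, I will check that $\operatorname{IM}$ commutes with the anti-involution $h\mapsto h^*$. On generators, $N_w^*=N_{w^{-1}}$ and $\operatorname{sgn}(w)=\operatorname{sgn}(w^{-1})$, while $\xi^*=\xi$ commutes with negation. Hence $\operatorname{IM}^*(V^\vee)\simeq(\operatorname{IM}^*V)^\vee$ canonically as $\A$-lattices. Any $\A$-linear intertwiner $I_t$ then remains an intertwiner between the transported lattices, with the same underlying $\A$-linear map. Its matrix is therefore unchanged, so the Smith exponents and the filtration \eqref{eq:jantzen} are unchanged by Lemma~\ref{lem:ed}. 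Because $\operatorname{IM}^*$ is an exact autoequivalence of module categories that permutes the simple modules, semisimplicity of layers and composition multiplicities transfer as well.
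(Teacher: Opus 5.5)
Your proposal is correct and follows essentially the same route as the paper. For the first isomorphism the paper simply quotes Solleveld's identity $X^{\rm an}_{y,\sigma_{\rm an},r,\rho}=\operatorname{sgn}^*E_{y,\sigma_{\rm an},-r,\rho}\simeq\operatorname{IM}^*E_{y,\sigma_{\rm g},r,\rho}$; this settles the $\operatorname{sgn}^*$ versus $\operatorname{IM}^*$ bookkeeping you flag, with $\rho$ unchanged, so your rank-one check is not needed. The paper then uses the generator formulas for $\operatorname{IM}$ to obtain compatibility with $\Hh_Q$, parabolic induction, the central arc and the transpose anti-involution, and your explicit induction isomorphism $h\otimes v\mapsto\operatorname{IM}^{-1}(h)\otimes v$ and cross-relation check just spell out steps it states in one line.
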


\begin{proof}
Solleveld's identity in Section~3.1, immediately before equation~(3.11), is
\[
 X^{\rm an}_{y,\sigma_{\rm an},r,\rho}
 =\operatorname{sgn}^{*}E_{y,\sigma_{\rm an},-r,\rho}
 \simeq \operatorname{IM}^{*}E_{y,\sigma_{\rm g},r,\rho}.
\]
On the Bernstein generators,
\[
 \operatorname{IM}(N_w)=\operatorname{sgn}(w)N_w,\qquad
 \operatorname{IM}(\xi)=-\xi,\qquad
 \operatorname{IM}(\mathbf r)=\mathbf r.
\]
Thus $\operatorname{IM}$ restricts to $\operatorname{IM}_Q$, commutes with
parabolic induction, and sends the central arc $-\nu-t\eta$ to
$\nu+t\eta$, which proves the family identity.  The same generator check
shows that $\operatorname{IM}$ commutes with the linear transpose
anti-involution.  Hence it transports both contragredients and the canonical
intertwining map.  The parameter $t$ is unchanged under the signed
base-ring identification, so inverse images of $t^n$ correspond for every
$n$.  Exactness of this fixed equivalence proves the remaining assertions.
\end{proof}

Thus an analytic central arc $\nu+t\eta$ is represented geometrically by
$-\nu-t\eta$.  Relative to the same positive roots this lies in the opposite
chamber.  The fixed-slice construction uses the opposite parabolic and
attractive nilradical; for that choice the geometric direction $-\eta$
is positive.  The same condition $t>0$ orients both arcs.  The enhanced-orbit
labels $\gamma,\delta$ below are understood through this bijection.  Whenever
a Langlands family with Levi $Q=M$ is fixed, put
$Q_{\mathrm{der}}=[Q,Q]$, and write $\Hh_{Q,\mathrm{der}}$ for the graded
affine Hecke algebra of the derived root system of $Q$.  If $\tau$ is an
$\Hh_{Q,\mathrm{der}}$-module, write $\tau_\lambda$ for its unramified
central twist by $\lambda\in\mathfrak z(\mathfrak q)$.  The Levi module used
in the geometric coefficient comparison below is therefore the
Iwahori--Matsumoto transform of the analytic tempered inducing module.

\subsection{The graded Slodowy slice}
\label{sec:graded-slodowy-slice}

The principal realization is constructed in arbitrary type.  The geometric
comparison is first formulated for a generalized Springer cuspidal datum and
its linear-dual datum; the main theorem uses the self-dual principal
specialization.  This subsection constructs a transverse slice on which the
deformation cocharacter has positive weights and compares the generalized
Springer diagram with its restriction to that slice.  The following
subsections add local systems and enhancements and identify the geometric
family with the algebraic standard module.

Let $G$ be a connected reductive group with Lie algebra $\mathfrak g$, let
$r>0$, and let $\sigma\in\mathfrak g$ be real semisimple.  Put
\[
 \mathfrak g_{\sigma,2r}=\{v\in\mathfrak g:[\sigma,v]=2rv\},
 \qquad G_\sigma=Z_G(\sigma),\qquad
 \mathfrak g_\sigma=\operatorname{Lie}G_\sigma=\mathfrak g_{\sigma,0}.
\]
$\mathfrak g_{\sigma,2r}$ is the localized parameter variety used in the geometric construction
of standard modules; compare \cite[Sections~2--4]{Solleveld}.  Let
$y\in\mathfrak g_{\sigma,2r}$ and choose an $\mathfrak{sl}_2$-triple
$(y,h,f)$ such that
\[
 \sigma_0=\sigma-rh
\]
centralizes the triple.  Such a homogeneous triple exists by the graded
Jacobson--Morozov theorem~\cite[Theorem~1]{Vinberg}; in the present
geometric-parameter convention, compare
\cite[equation~(26) and Proposition~3.5(c)]{AMS} and
\cite[equation~(3.6) and Lemma~B.3]{Solleveld}.  Set
$G_{\sigma_0}=Z_G(\sigma_0)$, and choose the attractive
parabolic $P_{\rm att}=G_{\sigma_0} U_{\rm att}$.  Write
$\mathfrak p_{\rm att}=\operatorname{Lie}P_{\rm att}$ and
$\mathfrak u_{\rm att}=\operatorname{Lie}U_{\rm att}$, and choose
$P_{\rm att}$ so that the $\sigma_0$-weights on
$\mathfrak u_{\rm att}$ are positive.
Let $\eta:\mathbb C^\times\to Z(G_{\sigma_0})^\circ$ be an integral cocharacter in the
same open chamber: whenever a nonzero $Z(G_{\sigma_0})^\circ$-weight $\alpha$ satisfies
$\alpha(\sigma_0)>0$, require $\langle\alpha,\eta\rangle>0$.

\begin{proposition}[Graded Slodowy contraction]
\label{prop:general-graded-slodowy}
The vector space
\[
N_{\sigma,r}(y)=\mathfrak g^f\cap\mathfrak g_{\sigma,2r},
\qquad \mathfrak g^f=\ker(\operatorname{ad}f),
\]
is an $\eta$-stable normal complement to the $G_\sigma$-orbit of $y$ in
$\mathfrak g_{\sigma,2r}$:
\begin{equation}\label{eq:general-slodowy-splitting}
 \mathfrak g_{\sigma,2r}
 =[\mathfrak g_\sigma,y]\oplus N_{\sigma,r}(y).
\end{equation}
Every $\eta$-weight on $N_{\sigma,r}(y)$ is strictly positive.  Consequently
\[
 V_{\sigma,r}(y)=y+N_{\sigma,r}(y)
\]
is a transverse affine slice contracted to $y$ by $\eta$; its intersection
with every $G_\sigma$-orbit closure is an attractive affine cone.
\end{proposition}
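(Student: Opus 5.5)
The plan is to obtain everything from $\mathfrak{sl}_2$-representation theory of the triple $(y,h,f)$, combined with the weight decomposition for the commuting semisimple element $\sigma_0=\sigma-rh$. Since $\sigma_0$ centralizes the triple, $\operatorname{ad}\sigma_0$, $\operatorname{ad}h$, $\operatorname{ad}y$ and $\operatorname{ad}f$ fit together: $\mathfrak g$ splits into joint eigenspaces $\mathfrak g=\bigoplus_{\lambda}\mathfrak g^{\sigma_0=\lambda}$, each a module for the triple. On such a summand, $\operatorname{ad}\sigma=\lambda+r\operatorname{ad}h$. Hence $\mathfrak g_{\sigma,2r}=\bigoplus_\lambda\bigl(\mathfrak g^{\sigma_0=\lambda}\bigr)_{h=(2r-\lambda)/r}$ and $\mathfrak g_\sigma=\bigoplus_\lambda(\mathfrak g^{\sigma_0=\lambda})_{h=-\lambda/r}$.

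The splitting \eqref{eq:general-slodowy-splitting} is then proved one isotypic string at a time. In a finite-dimensional $\mathfrak{sl}_2$-module, each $h$-weight space $V_k$ decomposes as $V_k=\operatorname{ad}y(V_{k-2})\oplus(V_k\cap\ker\operatorname{ad}f)$. This is the standard fact that the lowest-weight vectors complement the image of the raising operator. Apply it to $V=\mathfrak g^{\sigma_0=\lambda}$ with $k=(2r-\lambda)/r$. Then $k-2=-\lambda/r$, which is exactly the $h$-weight describing $\mathfrak g_\sigma$ in that summand. So $[\mathfrak g_\sigma,y]\cap\mathfrak g^{\sigma_0=\lambda}=\operatorname{ad}y(V_{k-2})$, and summing over $\lambda$ gives the decomposition. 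The tangent space to $G_\sigma y$ at $y$ is $[\mathfrak g_\sigma,y]$, so $N_{\sigma,r}(y)$ is a normal complement. Transversality of $V_{\sigma,r}(y)$ at $y$ follows, and by the usual openness argument the action map $G_\sigma\times V\to\mathfrak g_{\sigma,2r}$ is smooth near $(1,y)$.

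For positivity, take $v\in N_{\sigma,r}(y)$ in the summand $\lambda$ with $h$-weight $k$. Since $v$ is a lowest-weight vector, $k\le0$, so $\lambda=2r-rk\ge 2r>0$. Thus every $\sigma_0$-weight on $N$ is strictly positive. Such weights are weights of $\mathfrak u_{\rm att}$ as a module over the torus $Z(G_{\sigma_0})^\circ$, and they are nonzero there because $\sigma_0$ lies in the Lie algebra of that torus. The chamber condition on $\eta$ then gives $\langle\alpha,\eta\rangle>0$. The subspace $N$ is $\eta$-stable because $\eta$ is central in $G_{\sigma_0}$ and therefore commutes with $y,h,f$ and $\sigma$. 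For the contraction, use the action $v\mapsto y+\operatorname{Ad}(\eta(s))(v-y)$. This fixes $y$, since $\eta(s)$ centralizes $y$, and scales $N$ by positive weights, so $s\to0$ contracts $V$ to $y$.

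The step I expect to need the most care is the claim that $V\cap\overline{G_\sigma y'}$ is an attractive cone. The twisted action is $\operatorname{Ad}(\eta(s))$ with $\eta(s)\in G_\sigma$, because $\sigma_0$ and $\sigma$ commute and $\eta$ takes values in $Z(G_{\sigma_0})^\circ\subset T$-type centralizers. This element preserves every $G_\sigma$-orbit closure and preserves $V$. Therefore the intersection is a closed $\eta$-stable subvariety of $V$ containing $y$ in its closure under contraction. A closed invariant subvariety of a positively graded affine space is conical with unique fixed point $y$, provided it is nonempty, i.e. provided $y\in\overline{\mathcal O'}$. This needs $\eta(\mathbb C^\times)\subset G_\sigma$. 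That inclusion holds because $Z(G_{\sigma_0})^\circ$ centralizes $\sigma_0$ and $h$ (since $h\in\mathfrak g_{\sigma_0}$), and hence centralizes $\sigma=\sigma_0+rh$.
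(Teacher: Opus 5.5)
Your proposal is correct and follows essentially the same route as the paper. Both arguments use a simultaneous $\sigma_0$/$\mathfrak{sl}_2$ decomposition in which the lowest-weight vectors complement $\operatorname{ad}y$ applied to the $\sigma$-weight-zero part. Both then compute that a lowest-weight vector in $\mathfrak g_{\sigma,2r}$ has $\sigma_0$-weight $r(m+2)>0$ and apply the chamber condition on $\eta$. Your explicit check that $\eta(\mathbb C^\times)\subset G_\sigma$, so that $\eta$ preserves $G_\sigma$-orbit closures, supplies a point the paper's proof leaves implicit when it asserts the cone property.
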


\begin{proof}
Decompose $\mathfrak g$ simultaneously under $\sigma_0$ and the adjoint
$\mathfrak{sl}_2$.  On an irreducible $\mathfrak{sl}_2$-summand of highest
weight $m$, on which $\sigma_0$ has eigenvalue $a$, the $h$-weights are
$m,m-2,\ldots,-m$.  Since $\sigma=\sigma_0+rh$, its eigenvalue on the
$h$-weight-$j$ line is $a+rj$.

The map $\operatorname{ad}y$ raises $h$-weight by two.  Hence a
$2r$-eigenvector of $\sigma$ lies in
$[\mathfrak g_\sigma,y]$ unless it is the lowest-weight vector of its
$\mathfrak{sl}_2$-summand.  Indeed, if $v$ has $h$-weight $j$ and
$\sigma$-eigenvalue $a+rj=2r$, and if $v$ is not a lowest-weight vector,
then $v=yv'$ for a vector $v'$ of $h$-weight $j-2$.  The $\sigma$-eigenvalue
of $v'$ is
\[
 a+r(j-2)=(a+rj)-2r=0,
\]
so $v'\in\mathfrak g_\sigma$ and $v\in[\mathfrak g_\sigma,y]$.  The vectors
not obtained in this way are exactly the lowest-weight vectors, namely those
in $\mathfrak g^f$.  This proves the direct sum
\eqref{eq:general-slodowy-splitting}; it is the graded form of the usual
Slodowy decomposition.

For a lowest-weight vector $v$ in a highest-weight-$m$ summand, membership in
$\mathfrak g_{\sigma,2r}$ says
\[
 2r=a-rm,
 \qquad\text{hence}\qquad a=r(m+2)>0.
\]
Thus $v$ belongs to a positive $\sigma_0$-weight space, and in particular to
$\mathfrak u_{\rm att}$.  The chamber condition gives
$\operatorname{wt}_\eta(v)>0$.  This proves positivity on the whole normal
space.  Since $\eta$ centralizes the triple, it fixes $y$ and preserves the
slice.  The differential at $(1,y)$ of the action map
\[
 G_\sigma\times V_{\sigma,r}(y)\longrightarrow\mathfrak g_{\sigma,2r}
\]
is surjective, with kernel
$\mathfrak z_{\mathfrak g_\sigma}(y)\oplus0$.  After quotienting this
kernel it is the isomorphism
$[\mathfrak g_\sigma,y]\oplus N_{\sigma,r}(y)\to
\mathfrak g_{\sigma,2r}$ of \eqref{eq:general-slodowy-splitting}.  Thus the slice is transverse to the
orbit at $y$.  Finally, in the affine coordinates
$V_{\sigma,r}(y)=y+N_{\sigma,r}(y)$, the action of $\eta$ is linear about
$y$ and has strictly positive weights.  It contracts every point to $y$;
the intersection with a $G_\sigma$-stable orbit closure is consequently a
closed, positively graded affine cone with vertex $y$.
\end{proof}

The same $\mathfrak{sl}_2$ calculation appears in the proof that Solleveld's
determinant class is nonzero; see \cite[Appendix~B, Lemma~B.3]{Solleveld}.
Here it identifies the full normal representation and its contracting
chamber.

Use Lusztig's auxiliary induction slice
\[
 \mathfrak a=y+\mathfrak z_{\mathfrak p_{\rm att}}(f)
\]
where
$\mathfrak z_{\mathfrak p_{\rm att}}(f)=
\ker(\operatorname{ad}f)\cap\mathfrak p_{\rm att}$, and denote its proper
incidence variety by $\dot{\mathfrak a}$.  Equip it
with any generalized Springer cuspidal local system.  Let $T_{\sigma,r}$ be
the smallest torus in $G\times\mathbb C^\times$
whose Lie algebra contains $(\sigma,r)$, acting by
$(g,\zeta)v=\zeta^{-2}\operatorname{Ad}(g)v$.
Let $\mathcal P$ be the $G$-conjugacy class of parabolics in the cuspidal
datum, choose $P_0\in\mathcal P$, and identify $\mathcal P\simeq G/P_0$.

\begin{proposition}[Completed fixed-slice comparison]
\label{prop:general-fixed-slice}
The fixed part of the auxiliary slice is
\begin{equation}\label{eq:general-fixed-slice}
 \mathfrak a^{T_{\sigma,r}}=V_{\sigma,r}(y).
\end{equation}
The fixed incidence
variety over this slice is
\[
 \dot{\mathfrak a}^{T_{\sigma,r}}
 =\dot{\mathfrak a}\cap
 \bigl(V_{\sigma,r}(y)\times(G/P_0)^{\exp(\mathbb C\sigma)}\bigr).
\]
With the restricted cuspidal local system, completed localization identifies
the full equivariant diagram
with this fixed diagram, compatibly with proper pushforward and the
Euler-normalized convolution action.  At each fixed locus occurring in this diagram, let
$N_{\rm mov}$ denote the sum of the nonzero $T_{\sigma,r}$-weight summands in
its normal bundle.  Along the arc $(\sigma+t\eta,r)$ every corresponding
equivariant Euler class is a unit after base change to $\mathbb C[[t]]$, so
this comparison preserves $t$-adic orders.
\end{proposition}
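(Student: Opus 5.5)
The plan has three parts: compute the fixed locus by weights, reduce the incidence statement to fixed points on the flag variety, and then verify that the moving Euler classes become units along the arc.

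\emph{The fixed slice.} First I would compute $\mathfrak a^{T_{\sigma,r}}$ directly from weights. The torus $T_{\sigma,r}$ is the Zariski closure of $\exp(\mathbb C(\sigma,r))$, so its fixed vectors in $\mathfrak g$ under $(g,\zeta)v=\zeta^{-2}\operatorname{Ad}(g)v$ are exactly those $v$ with $[\sigma,v]=2rv$. This uses the fact that $\sigma$ is real, so the closure has no extra fixed vectors beyond this eigenspace. Since $y$ is fixed, the fixed part of the affine space $y+\mathfrak z_{\mathfrak p_{\rm att}}(f)$ is
\[
 y+\bigl(\mathfrak z_{\mathfrak p_{\rm att}}(f)\cap\mathfrak g_{\sigma,2r}\bigr).
\]
The proof of Proposition~\ref{prop:general-graded-slodowy} shows that every lowest-weight vector lying in $\mathfrak g_{\sigma,2r}$ has positive $\sigma_0$-weight $r(m+2)$. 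It therefore already lies in $\mathfrak u_{\rm att}\subset\mathfrak p_{\rm att}$. Hence the intersection equals $\mathfrak g^f\cap\mathfrak g_{\sigma,2r}=N_{\sigma,r}(y)$, which gives \eqref{eq:general-fixed-slice}.

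\emph{The fixed incidence variety.} Next I would treat the incidence variety. The subvariety $\dot{\mathfrak a}\subset\mathfrak a\times G/P_0$ is closed and $T_{\sigma,r}$-stable, and $T_{\sigma,r}$ acts on $G/P_0$ through its image in $G$. A point is fixed exactly when both coordinates are fixed. The fixed points of the closure of $\exp(\mathbb C\sigma)$ on $G/P_0$ coincide with $(G/P_0)^{\exp(\mathbb C\sigma)}$. Intersecting with $\dot{\mathfrak a}$ gives the displayed description. The cuspidal local system restricts along this closed embedding.

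\emph{Completed localization and Euler classes.} For the comparison after completion at $(\sigma,r)$, I would invoke the localization theorem \cite[Theorem~6.2(2)]{GKM} for $T_{\sigma,r}$-equivariant cohomology, completed at the point of $\operatorname{Lie}T_{\sigma,r}$ determined by $\chi$, in the form used in \cite[Proposition~2.3 and Theorem~2.5]{Solleveld}. After this completion, restriction to fixed loci becomes an isomorphism once the Euler classes of the moving normal parts are inverted. Each such Euler class is a product of weights $\lambda$ that do not vanish identically on $\operatorname{Lie}T_{\sigma,r}$, and such a weight satisfies $\lambda(\sigma,r)\neq0$ by minimality of $T_{\sigma,r}$. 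Compatibility with proper pushforward and with convolution follows from naturality of localization, together with the Euler normalization of the convolution product, which is exactly the twist making the pushforward formula commute.

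\emph{Units along the arc (main obstacle).} The step I expect to be the real obstacle is showing that these Euler classes become units along $(\sigma+t\eta,r)$. Pulled back to $\mathbb C[[t]]$, each factor becomes $\lambda(\sigma,r)+t\langle\lambda,\eta\rangle$. Its constant term is nonzero exactly when $\lambda$ is nonzero at $\chi$. The subtle point is that the moving part must be defined by nonvanishing at $\chi$, not merely by being a nonzero character of $T_{\sigma,r}$. I would check that these two notions agree: since $\sigma$ is real, a character of $T_{\sigma,r}$ vanishing at $(\sigma,r)$ vanishes on the whole Lie algebra. I would also check that $\eta$, being central in $G_{\sigma_0}$, preserves every fixed locus, so the arc makes sense in the equivariant cohomology of $T_{\sigma,r}\cdot\eta(\mathbb C^\times)$. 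Granting these two checks, every Euler class is $c\,(1+O(t))$ with $c\neq0$, hence a unit of $\mathbb C[[t]]$. Multiplication by units preserves $t$-adic orders, which completes the proof.
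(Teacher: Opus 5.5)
Your proposal is correct and follows the paper's proof essentially step for step: you use the positivity $a=r(m+2)$ to place $N_{\sigma,r}(y)$ inside $\mathfrak u_{\rm att}$, compute fixed points factorwise on $V_{\sigma,r}(y)\times G/P_0$, apply completed localization with the Euler-normalized convolution, and use Zariski density of $\exp(\mathbb C(\sigma,r))$ in $T_{\sigma,r}$ to force $d\lambda(\sigma,r)\neq0$ (reality of $\sigma$ is not actually needed for this, nor for the fixed-vector description). One detail should be added, as the paper does: on a positive-dimensional fixed component the moving Euler class is $\prod_i(\lambda_i+x_i)$ with nilpotent nonequivariant Chern roots $x_i$, so it is a unit in the base-changed cohomology of that component rather than literally an element $c(1+O(t))$ of $\mathbb C[[t]]$; your check that $\eta$ commutes with $T_{\sigma,r}$ and preserves the fixed loci makes explicit what the paper uses tacitly.
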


The fixed incidence variety is the restriction to the slice of Solleveld's
localized generalized Springer diagram.  It is generally a proper closed
subvariety of the ordinary Cartesian inverse image of
$V_{\sigma,r}(y)$.

\begin{proof}
The $T_{\sigma,r}$-fixed subspace of $\mathfrak g$ is exactly
$\mathfrak g_{\sigma,2r}$; this is the fixed-point description underlying
the localized complex $K_{\sigma,r}$ in \cite[Section~2.1]{Solleveld}.
Proposition~\ref{prop:general-graded-slodowy} shows that
$N_{\sigma,r}(y)\subset\mathfrak u_{\rm att}\subset\mathfrak p_{\rm att}$.
Hence
\[
 \mathfrak z_{\mathfrak p_{\rm att}}(f)\cap\mathfrak g_{\sigma,2r}
 =\mathfrak g^f\cap\mathfrak g_{\sigma,2r}
 =N_{\sigma,r}(y),
\]
which proves \eqref{eq:general-fixed-slice}.

In the incidence variety a fixed point has Lie-algebra coordinate in
$V_{\sigma,r}(y)$ and parabolic coordinate fixed by
$\exp(\mathbb C\sigma)$.  Only the $G$-factor acts on the parabolic
coordinate; the additional $\mathbb C^\times$-factor acts on the
Lie-algebra coordinate and hence imposes no further condition on $P$.
This proves the displayed description of the closed subvariety.  In
Solleveld's notation it is the restriction of
$\dot{\mathfrak g}_{\sigma,r}$; the square defining that variety is in
general non-Cartesian~\cite[Section~2.1, equations (2.11)--(2.14)]{Solleveld}.
The cuspidal coefficient system restricts canonically by equivariance.
Completed support localization identifies the equivariant (co)homology of
the full diagram with that of this fixed locus
\cite[Theorem~6.2(1)--(3)]{GKM}.  The two compatibilities needed later are
recorded next.

For convolution, use localization normalized by the moving Euler
classes in the fixed-locus Gysin maps.  In this normalization proper
pushforward, the refined pullbacks in convolution, and the action on
equivariant fibre homology commute with localization.  Equivalently,
the comparison identifies composition of the corresponding derived
morphisms; see \cite[Proposition~2.4 and Theorem~2.5]{Solleveld}.
All coefficient and component structures are retained.

This concerns the fixed-incidence comparison and its algebra action.
The normal map on the fixed parameter variety is defined separately
by \eqref{eq:normal-fibre-map}; its slice comparison is
\eqref{eq:normal-slice-arrow}.  No identification of ambient point
adjunctions under smooth restriction is asserted.

Put $T=T_{\sigma,r}$, fix one of the moving normal bundles just described,
and let
$\lambda\in X^*(T)$ be a weight occurring in $N_{\rm mov}$.  By definition,
such a weight is \emph{moving} when it is nontrivial; its weight space is
then transverse to, rather than tangent to, the fixed locus.  The analytic
one-parameter subgroup $\exp(\mathbb C(\sigma,r))$ is Zariski dense in the
smallest algebraic torus $T$ whose Lie algebra contains $(\sigma,r)$.  Hence
a nontrivial character $\lambda$ cannot be trivial on this subgroup, and its
differential satisfies
\[
 d\lambda(\sigma,r)\ne0.
\]
Along the deformation arc, the corresponding equivariant Euler factor is
\[
 d\lambda(\sigma,r)+t\,d\lambda(\eta,0),
\]
which has nonzero constant term and is therefore a unit of
$\mathbb C[[t]]$.  After applying the splitting principle, every factor of
$e_T(N_{\rm mov})$ has this form, up to positive-degree Chern classes; its
nonzero scalar term makes it invertible in the completed equivariant
coefficient ring (the positive-degree part is nilpotent on each
finite-dimensional fixed component).  Thus the full moving Euler class is a
unit.  Completed
localization therefore preserves $t$-adic orders and the open normalization
up to a unit.
\end{proof}

\begin{remark}[Localization input]
\label{rem:solleveld-version}
References to~\cite{Solleveld} use arXiv:2106.03196v3 (January 2025).  The
numbering cited here agrees with the final published version in
\emph{Journal of Algebra}.  The
fixed-point homology localization used in
Proposition~\ref{prop:general-fixed-slice} is Proposition~A.2 of that
version.  Although stated there for an affine variety, its proof extends by
d\'evissage to a finite-type torus variety with a finite stable affine
stratification; the incidence varieties here have such stratifications.
Proposition~2.4 compares the localized and global endomorphism
algebras, and Theorem~2.5 identifies their completions with the completed
Hecke algebra.  Appendix~A of that paper contains the required localization
argument.
\end{remark}

\begin{lemma}[Clean graded Slodowy pullback]
\label{lem:general-clean-slodowy}
Let $K_{\sigma,r}$ be the localized generalized Springer complex and let
$d=\dim G_\sigma y$.  On the graded Slodowy slice
$i_V:V_{\sigma,r}(y)\hookrightarrow\mathfrak g_{\sigma,2r}$ there is a
canonical clean-pullback isomorphism
\[
 i_V^!K_{\sigma,r}[2d](d)\xrightarrow{\sim}i_V^*K_{\sigma,r}.
\]
The same assertion holds for every pure IC summand of $K_{\sigma,r}$.
\end{lemma}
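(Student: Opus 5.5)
The plan follows the standard transversality argument for Slodowy slices. Let $a:G_\sigma\times V\to\mathfrak g_{\sigma,2r}$ be the action map, with $V=V_{\sigma,r}(y)$. By Proposition~\ref{prop:general-graded-slodowy} its differential at $(1,y)$ is surjective. Surjectivity is an open condition, so $a$ is smooth on a neighbourhood $G_\sigma\times V^\circ$, where $V^\circ$ is an open neighbourhood of $y$ in $V$. The fibres of $a$ there have relative dimension $\dim G_\sigma+\dim V-\dim\mathfrak g_{\sigma,2r}=\dim\mathfrak z_{\mathfrak g_\sigma}(y)$. The contracting cocharacter $\eta$ commutes with the action and contracts $V$ to $y$. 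Hence the locus in $V$ where $a$ is smooth along $G_\sigma\times\{v\}$ is $\eta$-stable, open and contains $y$, and therefore is all of $V$. Consequently $a$ is smooth on all of $G_\sigma\times V$, which upgrades the local statement to a global one.

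Next I would use equivariance. Since $K=K_{\sigma,r}$ is $G_\sigma$-equivariant, $a^*K\simeq\operatorname{pr}_V^*i_V^*K$. For the smooth map $a$ of relative dimension $e$ one has $a^!\simeq a^*[2e](e)$. Restricting to $\{1\}\times V$ factors $i_V$ as $a\circ(1\times\mathrm{id})$. The inclusion $\{1\}\times V\hookrightarrow G_\sigma\times V$ is a section of the smooth projection $\operatorname{pr}_V$, whose fibre $G_\sigma$ has dimension $g=\dim G_\sigma$. Because the pulled-back complex is constant along that fibre, $s^!\operatorname{pr}_V^*\simeq s^*\operatorname{pr}_V^*[-2g](-g)$. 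Combining the two shifts gives
\[
 i_V^!K\simeq i_V^*K[2e-2g](e-g)=i_V^*K[-2d](-d),
\]
because $g-e=\dim G_\sigma-\dim\mathfrak z_{\mathfrak g_\sigma}(y)=d$. Rearranging gives $i_V^!K[2d](d)\simeq i_V^*K$.

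Canonicity is the point that needs care. I want the isomorphism to be the natural transformation $i_V^!\to i_V^*[-2d](-d)$, obtained from the fundamental class of $V$ in $\mathfrak g_{\sigma,2r}$, rather than an abstract one. For a transverse (smooth-locus) inclusion this class is always defined, as the Gysin/purity map for a locally complete intersection whose normal bundle is the trivial bundle with fibre $[\mathfrak g_\sigma,y]$. The chain of base-change isomorphisms above identifies this map with an isomorphism on $a^*$-pulled-back objects, so it is an isomorphism on $K$ itself. I would run this through the formalism of \cite[Lemma~3.9]{JMWParity}, which is the same transverse-product argument already used for \eqref{eq:normal-slice-arrow}. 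The Tate twist appears because the purity isomorphism for a smooth morphism carries the twist by its relative dimension.

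For the pure IC summands: the decomposition theorem splits $K$ into shifted $\IC_\delta$'s, which are $G_\sigma$-equivariant. The natural transformation is additive and compatible with this splitting, so the isomorphism restricts to each summand. Alternatively, the same argument applies verbatim to any $G_\sigma$-equivariant complex.

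The main obstacle is the globalization step from a neighbourhood of $y$ to all of $V$. I would handle it with the $\eta$-contraction argument above, which requires knowing that the smooth locus of $a$ is stable under $\eta$ acting on $G_\sigma\times V$ by $(g,v)\mapsto(\eta g\eta^{-1},\eta v)$. This stability holds because $\eta$ lies in $Z(G_{\sigma_0})^\circ\subset G_\sigma$ and $a$ is equivariant for this twisted action.
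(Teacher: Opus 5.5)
Your proposal is correct and is essentially the paper's proof. It has the same ingredients: smoothness of $a:G_\sigma\times V\to\mathfrak g_{\sigma,2r}$ near $(1,y)$, propagation to all of $V$ by the $\eta$-contraction, the shift computation through smooth base change and the section $s(v)=(1,v)$, and naturality under idempotents for the IC summands. You write $i_V^!=s^!a^!$, where the paper instead compares $a^!K\simeq p^!i_V^!K$ with $a^*K\simeq p^*i_V^*K$ and then applies $s^*$.

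Getting local smoothness from openness of the submersion locus is a genuine economy, since it makes the paper's finite Whitney stratification unnecessary for this lemma. The only slip is in your canonicity remark: the fundamental-class map goes $i_V^*K[-2d](-d)\to i_V^!K$, not the reverse. This does not affect the argument, because your chain of isomorphisms already proves the statement.
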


\begin{proof}
Write $V=V_{\sigma,r}(y)$.
\smallskip
\noindent\emph{A finite Whitney stratification.}
Choose a Cartan subalgebra containing $\sigma$.  The sets of roots satisfying
$\alpha(\sigma)=0$ and $\alpha(\sigma)=2r$, together with the signs of all
remaining numbers $\alpha(\sigma)$ and $\alpha(\sigma)-2r$, are constant on
the relative interior of a rational polyhedral face in the real parameter
space.  Choose a rational point $(\sigma',r')$ in that same face and multiply
it by a positive integer.  Then
\[
 Z_G(\sigma')=Z_G(\sigma),\qquad
 \mathfrak g_{\sigma',2r'}=\mathfrak g_{\sigma,2r},
\]
and the attractive parabolic is unchanged.  The finite-orbit theorem for
integral graded Lie algebras therefore applies: $G_\sigma$ has only
finitely many orbits on the nilpotent graded piece
\cite[Section~2.1]{LusztigGraded}.  The orbit
partition is Whitney.  Indeed, Whitney regularity holds on a nonempty open
part of each incident pair of algebraic strata
\cite[Lemma~19.3]{Whitney}, and equivariance plus
transitivity propagates that open condition over the smaller orbit.
Solleveld's description of $K_{\sigma,r}$ as a proper direct image from the
localized incidence diagram shows that it is constructible for this
partition~\cite[Section~2.1 and Section~4]{Solleveld}.

\smallskip
\noindent\emph{Clean pullback near the base point.}
By~\eqref{eq:general-slodowy-splitting}, $V$ is transverse at $y$ to the
$G_\sigma$-orbit through $y$.  A standard consequence of Whitney
condition~(a) is that this transversality persists after shrinking the
slice: there is an open neighbourhood $V^\circ\subset V$ of $y$ such that
$V^\circ$ is transverse to every orbit stratum that it meets.

Put $g_0=\dim G_\sigma$, and consider the action and projection maps
\[
 a:G_\sigma\times V^\circ\longrightarrow\mathfrak g_{\sigma,2r},
 \qquad p:G_\sigma\times V^\circ\longrightarrow V^\circ
\]
given by $a(g,v)=\operatorname{Ad}(g)v$ and $p(g,v)=v$.  At a point
$(1,v)$, the image of the differential of $a$ is
\[
 T_v(G_\sigma v)+T_vV^\circ.
\]
This is the whole tangent space of $\mathfrak g_{\sigma,2r}$ by the
transversality just established.  Hence $a$ is a submersion at every
$(1,v)$, and $G_\sigma$-translation gives the same conclusion at every
$(g,v)$.  Thus $a$ is smooth.  Since
$\dim V=\dim\mathfrak g_{\sigma,2r}-d$, its relative complex dimension is
\[
 \dim(G_\sigma\times V)-\dim\mathfrak g_{\sigma,2r}=g_0-d.
\]
The projection $p$ is smooth of relative complex dimension $g_0$.

\smallskip
\noindent\emph{Propagation of transversality.}
Every $\eta$-orbit in $V$ meets $V^\circ$: the orbit map extends to
$\mathbb A^1$ with value $y$ at zero, and $V^\circ$ is an open
neighbourhood of $y$.  Moreover, $\eta$ preserves $V$ and carries
$G_\sigma$-orbits to themselves.  Its differential therefore transports
the equality
$T_v(G_\sigma v)+T_vV=T_v\mathfrak g_{\sigma,2r}$
between points of the same $\eta$-orbit.  Since that equality holds on
$V^\circ$, it holds on all of $V$.
Consequently the action map
$a:G_\sigma\times V\to\mathfrak g_{\sigma,2r}$ is smooth everywhere,
of relative dimension $g_0-d$.  In what follows $a$ and $p$ denote
the maps on $G_\sigma\times V$.

\smallskip
\noindent\emph{The global pullback comparison.}

Consider the two pullbacks of $K_{\sigma,r}$.  The
$G_\sigma$-equivariant structure on $K_{\sigma,r}$ gives
\[
 a^*K_{\sigma,r}\simeq p^*i_V^*K_{\sigma,r}.
\]
The same equivariant structure, together with smooth purity for the action
and projection maps, gives the extraordinary-pullback comparison
\[
 a^!K_{\sigma,r}\simeq p^!i_V^!K_{\sigma,r}.
\]
Recall that if $f$ is smooth of relative complex dimension $e$, then
\[
 f^!\simeq f^*[2e](e).
\]
Applying this formula first to $p$ and then to $a$ yields the chain
\[
\begin{aligned}
 p^*i_V^!K_{\sigma,r}[2g_0](g_0)
 &\simeq p^!i_V^!K_{\sigma,r}\\
 &\simeq a^!K_{\sigma,r}\\
 &\simeq a^*K_{\sigma,r}[2(g_0-d)](g_0-d)\\
 &\simeq p^*i_V^*K_{\sigma,r}
       [2(g_0-d)](g_0-d).
\end{aligned}
\]
Cancelling the common shift $[2(g_0-d)](g_0-d)$ gives
\[
 p^*\bigl(i_V^!K_{\sigma,r}[2d](d)\bigr)
 \simeq p^*i_V^*K_{\sigma,r}.
\]
The projection $p$ has the section $s(v)=(1,v)$.  Applying $s^*$ therefore
removes $p^*$ and gives
\[
 i_V^!K_{\sigma,r}[2d](d)
 \xrightarrow{\sim}i_V^*K_{\sigma,r}.
\]
This constructs the asserted isomorphism globally, after global
smoothness has been established.  In particular, it does not assume
an inverse purity morphism before proving that the pullback is clean.

Finally, a decomposition-theorem summand is the image of an idempotent
endomorphism of $K_{\sigma,r}$.  Naturality of this isomorphism makes it commute with
every such idempotent.  The isomorphism therefore restricts to each pure IC
summand, as asserted.
\end{proof}

\begin{corollary}[Absolute layer normalization]
\label{cor:absolute-layer-normalization}
Let $i_\gamma:\{y\}\hookrightarrow\mathfrak g_{\sigma,2r}$, let
$s=i_V:V_{\sigma,r}(y)\hookrightarrow\mathfrak g_{\sigma,2r}$, and put
$d_\gamma=\dim G_\sigma y$.  For a pure perverse IC summand $\IC_\delta$ set
\[
 I_{\gamma\delta}=s^!\IC_\delta[d_\gamma]
 \simeq s^*\IC_\delta[-d_\gamma](-d_\gamma).
\]
Then $I_{\gamma\delta}$ is perverse on the transverse slice.  Assume that
the completed costalk and stalk pulled back to the chosen one-variable arc
are finite free.  Equip the geometric standard-module special fibre
$X_\gamma$ with the Smith filtration $F^\bullet$ induced by the pulled-back
normal map \eqref{eq:normal-fibre-map}.  With the smooth summand $\delta=\gamma$ placed in
layer zero, the absolute layer index is
\[
 [\operatorname{gr}_F^nX_\gamma:L_\delta]
 =\dim\operatorname{Hom}_{A_\gamma}\!\left(
   \rho_\gamma,
   \mathcal H^{n+d_\gamma}i_\gamma^!\IC_\delta
 \right).
\]
\end{corollary}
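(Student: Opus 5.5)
The plan has three stages: first identify $I_{\gamma\delta}$ as a perverse object on the slice, then feed it into Proposition~\ref{prop:ext-equivariance}, and finally translate degrees back to the point $y$.

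\emph{Perversity.} The isomorphism $s^!\IC_\delta[d_\gamma]\simeq s^*\IC_\delta[-d_\gamma](-d_\gamma)$ is Lemma~\ref{lem:general-clean-slodowy} shifted by $[-d_\gamma]$. For perversity of $I_{\gamma\delta}$ I would use the smooth action map $a:G_\sigma\times V\to\mathfrak g_{\sigma,2r}$ built in that proof. It has relative dimension $g_0-d_\gamma$, so $a^*[g_0-d_\gamma]$ is perverse $t$-exact. Combined with $a^*\IC_\delta\simeq p^*s^*\IC_\delta$, this shows that $p^*\bigl(s^*\IC_\delta[-d_\gamma]\bigr)[g_0]$ is perverse. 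Since $p$ is smooth of relative dimension $g_0$ with connected fibres, $s^*\IC_\delta[-d_\gamma]$ is itself perverse. The same argument gives an IC description on the slice. The cone $V\cap\overline{\mathcal O_\delta}$ from Proposition~\ref{prop:general-graded-slodowy} is positively contracted, so Lemmas~\ref{lem:cone-calculation} and~\ref{lem:normal-purity-base-change} apply to $I_{\gamma\delta}$.

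\emph{Layers from the proposition.} By \eqref{eq:normal-slice-arrow}, the normal map $c_y^N(\IC_\delta)$ is the vertex map $k^!I_{\gamma\delta}\to k^*I_{\gamma\delta}$. Given the assumed finite freeness, Proposition~\ref{prop:ext-equivariance} says that $\operatorname{gr}^n_F$ of the $\delta$-summand of the special fibre of $\mathsf L_!$ is the Grojnowski degree-$n$ part $H_G^n$ of the reduced costalk $k^!I_{\gamma\delta}$, and that this is a semisimple layer. Recall that $H_G^n$ is the ordinary costalk degree shifted by $\Delta_c$ and normalized so that the point-supported summand sits in degree zero. Reduction modulo the augmentation ideal recovers the ordinary cohomology $H^\bullet(k^!I_{\gamma\delta})$, by the freeness and base change in Lemma~\ref{lem:normal-purity-base-change}. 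So the task is to find the ordinary degree $q$ of $k^!I_{\gamma\delta}$ that corresponds to Grojnowski degree $n$.

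\emph{Degree bookkeeping.} For $\delta=\gamma$, the slice object $I_{\gamma\gamma}$ is the skyscraper $\rho_\gamma$-local system at the vertex, with costalk in degree $0$, and it must land in layer zero. For a general $\delta$, I will apply the computation of Proposition~\ref{prop:ext-equivariance}: a primitive string of length $a$ yields $|u|_{G,!}=a$. Combined with the contraction grading, this should give that ordinary costalk degree $q$ corresponds to layer $n=q$. Then I unwind the shifts:
\[
 k^!I_{\gamma\delta}=k^!s^!\IC_\delta[d_\gamma]=i_\gamma^!\IC_\delta[d_\gamma],
\]
so $H^n(k^!I_{\gamma\delta})=\mathcal H^{n+d_\gamma}i_\gamma^!\IC_\delta$. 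The standard module $X_\gamma$ is the $\rho_\gamma$-isotypic part of the normal costalk of $K$, and the decomposition $K=\bigoplus_\delta V_\delta\otimes\IC_\delta$ identifies the $L_\delta$-multiplicity space with $V_\delta$. Taking $\operatorname{Hom}_{A_\gamma}(\rho_\gamma,-)$ then gives the formula. The component projector commutes with the filtration because $\psi_{\eta,x}$ is $\Gamma$-invariant, as required in Lemma~\ref{lem:residual-grading}.

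\emph{Main obstacle.} The hard step is the uniform check that the translation fixed by the point-supported summand makes $|u|_{G,!}$ equal to the ordinary costalk degree $q$ for every $\delta$, rather than only up to a $\delta$-dependent offset. I would verify it with the string computation. Costalk classes with $c(u)=z^av$ correspond to bottom vectors of degree $-a$ in the cokernel $\mathbb H^\bullet(\mathcal Y,\overline I)[1]$, and by \eqref{eq:degree-balance} this gives $\deg u+\Delta_c=a$. The remaining point is that $\Delta_c=0$ for the vertex map $k^!I\to k^*I$ of a perverse object, since $c$ is the adjunction morphism, of degree zero. With $\Delta_c=0$, $\deg u=a=n$ follows. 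I would double-check this against the example $\mathbb A^d$ given after Lemma~\ref{lem:cone-calculation}, where the costalk lies in degree $d$ and the expected layer is $d$.
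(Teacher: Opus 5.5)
Your proposal is correct and follows essentially the paper's route: $k^!s^!=i_\gamma^!$ turns slice costalk degree $n$ into ambient degree $n+d_\gamma$, Proposition~\ref{prop:ext-equivariance} with the Smith--primitive isomorphism matches Grojnowski degree $n$ with the $n$-th Smith layer, and the point-supported head summand $I_{\gamma\gamma}$ fixes the translation at zero. Your smooth-action-map proof of perversity and your observation that $\Delta_c=0$, which makes the translation zero uniformly in $\delta$, spell out what the paper leaves implicit; one wording slip to fix is that $V_\delta$ realizes $L_\delta$ itself, while the multiplicity space is $\operatorname{Hom}_{A_\gamma}(\rho_\gamma,\mathcal H^{n+d_\gamma}i_\gamma^!\IC_\delta)$.
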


\begin{proof}
Let $k:\{y\}\hookrightarrow V_{\sigma,r}(y)$.  The clean-pullback
normalization gives
\[
k^!I_{\gamma\delta}=i_\gamma^!\IC_\delta[d_\gamma].
\]
Consequently
\[
 \mathcal H^n(k^!I_{\gamma\delta})
 \cong
 \mathcal H^{n+d_\gamma}(i_\gamma^!\IC_\delta).
\]
With the Grojnowski regrading fixed in
Proposition~\ref{prop:ext-equivariance}, the left-hand side is the
degree-$n$ part of the regraded costalk on the transverse slice.  The
Smith--primitive isomorphism~\eqref{eq:smith-primitive-isomorphism}
identifies this degree-$n$ part with the contribution of $\IC_\delta$ to
the $n$-th Smith layer, once the common translation of the grading has
been fixed.  For
$\delta=\gamma$ the smooth point costalk satisfies
\[
 \mathcal H^{d_\gamma}i_\gamma^!\IC_\gamma\cong\rho_\gamma.
\]
The point-supported head summand therefore occurs in Smith layer zero and
fixes the common translation.  This proves the asserted absolute layer
formula.
\end{proof}

\subsection{The local theorem}

\begin{theorem}[Local Smith theorem]
\label{thm:general-local}
Let $y\in\mathfrak g_{\sigma,2r}$ and let $\eta$ be a contracting
cocharacter as in Proposition~\ref{prop:general-graded-slodowy}.
Let $K_{\sigma,r}$ be a localized Springer complex whose perverse IC
summands are pure and polarizable.  Form the completed normal map
\eqref{eq:normal-arc-lattices} using the connected point stabilizer.
Its Smith filtration is convolution-equivariant and has semisimple
layers.  If the coefficient homomorphism is component-invariant, the same holds after
projection to an enhancement $\rho_\gamma$.  With the head in layer
zero,
\[
 [\operatorname{gr}_F^nX_\gamma:L_\delta]
 =\dim\operatorname{Hom}_{A_\gamma}\!\left(
 \rho_\gamma,\mathcal H^{n+d_\gamma}i_\gamma^!\IC_\delta\right).
\]
Here $X_\gamma$ denotes the geometric standard-module special fibre;
the algebraic family is identified in
Proposition~\ref{prop:general-coefficient-arc}.
\end{theorem}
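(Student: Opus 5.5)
The plan is to assemble the theorem from the local results already proved, in four steps.

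\emph{Step 1: reduce to a vertex map on a contracted cone.}
Proposition~\ref{prop:general-graded-slodowy} gives the graded Slodowy slice $V=V_{\sigma,r}(y)$. It is transverse to $G_\sigma y$ and contracted to $y$ by $\eta$ with strictly positive weights. Lemma~\ref{lem:general-clean-slodowy} gives the clean-pullback isomorphism on $V$, so each IC summand $\IC_\delta$ restricts to the perverse object $I_{\gamma\delta}$ of Corollary~\ref{cor:absolute-layer-normalization}. Equation~\eqref{eq:normal-slice-arrow} then identifies the normal map $c_x^N(K_{\sigma,r})$, summand by summand, with the vertex adjunction $k^!I\to k^*I$ on $V$. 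Because $\eta$ lands in $H=C_{\chi,x}^\circ$ and preserves $V$, this identification is equivariant for the torus generated by $\eta$ and $A=T_{\sigma,r}$.

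\emph{Step 2: base change and Hypothesis~\ref{hyp:local-cone-compatibility}.}
Purity and polarizability of the summands let us apply Lemma~\ref{lem:normal-purity-base-change}. It gives finite freeness of the $H$-equivariant costalk and stalk, and it computes the change of equivariance from $H$ to the effective contracting torus $A_\eta$ by ordinary tensor product. After completing at $\chi$ and pulling back along $\psi_{\eta,x}$, the lattices $\mathsf L^\eta_{!,*}$ are therefore the $z$-adically completed $A_\eta$-equivariant vertex costalk and stalk, with $z\mapsto c_\eta t$ and $c_\eta>0$ by positivity. This is exactly the cone situation of Lemma~\ref{lem:cone-calculation}.

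The main obstacle is here: verifying that the multivariable coefficient ring $\widehat R_x$, pulled back along the arc $\chi+t(\eta,0)$, factors through $H_{A_\eta}$ after completion. I would try first to split $H$ up to isogeny as $A\cdot A_\eta\cdot(\text{complement})$. The $A$-coordinate specializes to the nonzero constant $\chi$, and it acts trivially on the normal objects by Lemma~\ref{lem:residual-grading}, so its contribution is a unit scalar. The complement is augmented. Freeness kills any higher Tor, so only the $A_\eta$-factor survives, and it is linear in $t$. This establishes Hypothesis~\ref{hyp:local-cone-compatibility}.

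\emph{Step 3: equivariance, semisimplicity, and layer indices.}
With the hypothesis in hand, Proposition~\ref{prop:ext-equivariance} gives the $\mathcal E_\chi^{\rm res}$-stability of the Smith filtration. It also shows that $F^p$ coincides with the Grojnowski degree filtration, that $\mathcal E_{>0}=\operatorname{rad}$ annihilates each layer, and hence that every layer is semisimple. For the enhancement statement, assume $\psi_{\eta,x}$ is $\Gamma$-invariant. The $\Gamma$-action is then $\A$-linear and commutes with $c^\eta$, with the cone identification (by the $\Gamma$-equivariance clause of Lemma~\ref{lem:cone-calculation}), and with the $\mathcal B_\eta$-action. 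Applying the $\rho_\gamma$-isotypic projector is exact in characteristic zero, so it preserves inverse images of $t^p\mathsf L_*$, Smith exponents, and the grading. The component $A_\gamma$ is identified with the relevant quotient of $\Gamma$ in the usual way.

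\emph{Step 4: the layer formula.}
Corollary~\ref{cor:absolute-layer-normalization} supplies the absolute indexing. Within the $\rho_\gamma$-isotypic part, the degree-$n$ Grojnowski piece of the costalk of $I_{\gamma\delta}$ is $\operatorname{Hom}_{A_\gamma}(\rho_\gamma,\mathcal H^{n+d_\gamma}i_\gamma^!\IC_\delta)$. The point-supported head $\delta=\gamma$ sits in degree zero, fixing the translation. Finally, the multiplicity of $L_\delta$ in a layer equals the dimension of its $V_\delta$-multiplicity space under the identification $(\mathcal E_\chi^{\rm res})_0=\prod_\delta\operatorname{End}(V_\delta)$ and the central-fibre isomorphism of Lemma~\ref{lem:residual-grading}. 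Here $L_\delta$ corresponds to the simple $\operatorname{End}(V_\delta)$-module, via Lusztig's and Solleveld's realization of $L_\delta$ as the head of the costalk module. This gives the displayed formula.
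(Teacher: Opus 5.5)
Your proposal is correct and follows the paper's proof essentially step for step. You reduce via the slice, clean pullback and \eqref{eq:normal-slice-arrow} to the vertex map, use Lemma~\ref{lem:normal-purity-base-change} with restriction to the contracting torus to verify Hypothesis~\ref{hyp:local-cone-compatibility}, and then invoke Proposition~\ref{prop:ext-equivariance} for semisimplicity and Corollary~\ref{cor:absolute-layer-normalization} for the absolute indexing. The differences are minor.
\begin{itemize}
\item In your Step~2 the paper quotients by $A$ and observes that evaluation along the residual arc $t\bar\eta$ factors through restriction to its one-dimensional image. No splitting of $H$ is needed, and $H$ has a unipotent radical, so a splitting would first require passing to its reductive factor as in Proposition~\ref{prop:two-component-groups}.
\item The paper treats the point-slice case separately, before invoking the cone calculation, since there is no effective contracting torus in that case.
\end{itemize}
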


\begin{proof}
\emph{The normal arrow.}
Put $V=V_{\sigma,r}(y)$, $s:V\hookrightarrow X_\chi$, and
$k:\{y\}\hookrightarrow V$.  Proposition~\ref{prop:general-graded-slodowy}
makes $V$ a positive transverse slice.
Lemma~\ref{lem:general-clean-slodowy} gives the pure perverse object
\[
 I_{\gamma\delta}=s^!\IC_\delta[d_\gamma]
 \simeq s^*\IC_\delta[-d_\gamma](-d_\gamma).
\]
Transverse base change \eqref{eq:normal-slice-arrow} identifies the
orbit-restriction map with $k^!I_{\gamma\delta}\to k^*I_{\gamma\delta}$.
The ordinary ambient point map is not used.
If $V$ is a point, this map is the identity.  The normal module is
concentrated in residual degree zero, so the positive-degree ideal acts
by zero; the conclusion follows with only layer zero.  Henceforth
assume $\dim V>0$.

\smallskip
\emph{Coefficients and freeness.}
The normal fibres are equivariant for $H=C_{\chi,y}^\circ$.
Their ordinary cohomology is pure by
Lemma~\ref{lem:normal-purity-base-change}, which also gives finite
freeness over the stabilizer coefficient ring and ordinary base change
for restriction to a subtorus.  Quotient $A=T_{\sigma,r}$.
The arc becomes $t\bar\eta$ in $H/A$.  Restriction to its one-dimensional
image identifies the pulled-back normal map with the cone map over
$\mathbb C[[z]]$, followed by
\[
 z\longmapsto c_\eta t,\qquad c_\eta>0.
\]
Both lattices are therefore finite free of equal rank and Hypothesis
\ref{hyp:local-cone-compatibility} holds.  This coefficient comparison
is needed in addition to transversality.

\smallskip
\emph{Smith layers and the algebra action.}
Lemma~\ref{lem:cone-calculation} identifies the torsion cokernel on each
positive-dimensional support with the projectivized punctured-slice
cohomology, shifted by $[1]$.  A length-$a$ Lefschetz string gives a
Smith block $z^a$, hence $t^a$ up to a unit.  A point-supported summand
gives an identity block.  Proposition~\ref{prop:ext-equivariance}
identifies this filtration with the residual degree filtration.

The acting algebra is still the full specialized convolution algebra
$\mathcal E_\chi^{\rm res}$; only its fibre representation passes
through the stabilizer.  Its degree-$m$ part raises normal degree by
$m$, and its radical is the positive-degree ideal
(Lemma~\ref{lem:residual-grading}).  The radical kills each Smith
layer, proving semisimplicity.  For a component-invariant arc, exact
isotypic projection commutes with this calculation.

Finally $k^!I_{\gamma\delta}=i_\gamma^!\IC_\delta[d_\gamma]$.
Normal degree $n$ is therefore ambient costalk degree $n+d_\gamma$.
For $\delta=\gamma$ the normal restriction is point-supported; its
$\rho_\gamma$-fibre puts the head in degree zero.  This proves the
formula, with the normalization of
Corollary~\ref{cor:absolute-layer-normalization}.
\end{proof}

\subsection{Enhancements and the integral family}

Two component groups enter the construction: the group attached to the
fixed geometric Levi datum and the group attached to the final orbit.  The
comparison proceeds in three steps.  First, the groups are identified and
the common isotypic summand is selected.  Next, the universal Levi-centre
coefficient ring is split from the fixed Levi module.  Finally, completed
Langlands induction is compared with the universal equivariant costalk
family.

For reference, the external inputs from Solleveld used in this subsection
are the following.  Equation~(B.1) is the change-of-equivariance
isomorphism.  Lemma~B.3 proves that the determinant class $\epsilon_-$ is
nonzero at the Langlands parameter, and Theorem~B.2(a) says that the
induction map is injective with image containing $\epsilon_-$ times the
target.  Proposition~3.5(a), including equation~(3.16), identifies the
completed source with the equivariant costalk; Proposition~3.5(b), equations
(3.17)--(3.18), gives the dual stalk statement.  Theorem~2.5 is used only to
identify the completed convolution action with the completed graded affine
Hecke action.  These statements are applied with all coefficient and
component-group structures retained.

Retain $y$ and $r$ from the fixed-slice construction, and let $Q$ be the
Langlands Levi.  Let $\delta_{\rm an}$ be the analytic tempered
$\Hh_Q$-module represented by a semisimple point
$s_{\delta,{\rm an}}\in\mathfrak q$ satisfying
$[s_{\delta,{\rm an}},y]=-2ry$, together with an enhancement $\rho_Q$.
Put
$s_{\delta,{\rm g}}=-s_{\delta,{\rm an}}$, and let
$\delta_{\rm g}=\operatorname{IM}_Q^*\delta_{\rm an}$ be the corresponding
geometric Levi module.  Let $\rho$ be the enhancement of the final orbit.
Write the full
geometric parameter as
\[
 \sigma_{\rm g}=s_{\delta,{\rm g}}+\nu_{\rm g},
 \qquad \nu_{\rm g}\in\mathfrak z(\mathfrak q).
\]

Choose the homogeneous triple $(y,h,f)$ so that
$\sigma_0=\sigma_{\rm g}-rh$ centralizes it, and take
$Q=Z_G(\sigma_0)$, as in the Langlands normalization above.  Set
\[
 \mathcal R_y=Z_G(y,h,f,\sigma_0),\qquad
 \mathcal U_y=Z_{R_uZ_G(y)}(\sigma_{\rm g}),
\]
where $R_u$ denotes the connected unipotent radical.  The group
$\mathcal R_y$ can be disconnected.  It is its component group,
not equality of the full stabilizers, that controls enhancements.

\begin{proposition}[Stabilizers, coefficients, and enhancements]
\label{prop:two-component-groups}
There is a Levi decomposition
\begin{equation}\label{eq:component-stabilizers}
 Z_G(\sigma_{\rm g},y)=\mathcal U_y\rtimes\mathcal R_y,
 \qquad
 \mathcal R_y=Z_Q(\sigma_{\rm g},y)=Z_Q(s_{\delta,{\rm g}},y).
\end{equation}
$\mathcal U_y$ is connected and unipotent.  Inclusion therefore identifies
\[
 A_y=\pi_0Z_G(\sigma_{\rm g},y)
 \xleftarrow{\ \sim\ }
 \pi_0Z_Q(s_{\delta,{\rm g}},y)=A_Q.
\]
Restriction to the corresponding Levi factor of the scaling stabilizer
identifies the connected-equivariant coefficient rings and normal-fibre
cohomology, compatibly with the normal map and its Hecke action.
The universal central-twist coefficient homomorphism is $A_Q$-invariant.
After this base change the identification $\rho_Q=\rho$ gives the same
isotypic projector on both sides of the normal map.
\end{proposition}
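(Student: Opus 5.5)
The plan is to start from the standard structure of centralizers of nilpotent elements via Jacobson--Morozov, adapted to the graded setting. Since $\sigma_0=\sigma_{\rm g}-rh$ centralizes the triple $(y,h,f)$, the group $Z_G(y)$ has the Levi decomposition $Z_G(y)=R_uZ_G(y)\rtimes Z_G(y,h,f)$, with $Z_G(y,h,f)$ reductive. I would then take fixed points under the semisimple element $\sigma_{\rm g}=\sigma_0+rh$.

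\textbf{Step 1: reduce $\sigma_{\rm g}$-centralization to $\sigma_0$-centralization.} An element of $Z_G(y,h,f)$ commutes with $h$. Hence it commutes with $\sigma_{\rm g}$ if and only if it commutes with $\sigma_0$. This gives $Z_G(y,h,f)\cap Z_G(\sigma_{\rm g})=\mathcal R_y$.

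\textbf{Step 2: pass the decomposition to $\sigma_{\rm g}$-fixed points.} The torus generated by $\sigma_{\rm g}$ (the closure of $\exp(\mathbb C\sigma_{\rm g})$) normalizes $Z_G(y)$, since $\operatorname{Ad}(\exp s\sigma_{\rm g})$ scales $y$. It also preserves both Levi factors: it acts on $R_uZ_G(y)$, and it fixes $h$, $f$ up to scaling. Taking fixed points of a torus acting on a semidirect product yields the semidirect product of the fixed points. Thus $Z_G(\sigma_{\rm g},y)=\mathcal U_y\rtimes\mathcal R_y$. Moreover $\mathcal U_y$, being the fixed points of a torus on a connected unipotent group, is connected; this follows by passing to the Lie algebra through the exponential map.

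\textbf{Step 3: identify $\mathcal R_y$ with $Z_Q(s_{\delta,{\rm g}},y)$.} Since $Q=Z_G(\sigma_0)$, we have $\mathcal R_y\subset Q$. The element $\nu_{\rm g}$ is central in $\mathfrak q$, so inside $Q$ centralizing $\sigma_{\rm g}$ is the same as centralizing $s_{\delta,{\rm g}}$. Conversely, an element $g\in Z_Q(\sigma_{\rm g},y)$ commutes with $\sigma_0$ and $y$, so it fixes $h=(\sigma_{\rm g}-\sigma_0)/r$. An element fixing $y$ and $h$ also fixes $f$ by uniqueness of $f$ in the triple. This yields both displayed equalities. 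Since the unipotent factor is connected, $\pi_0$ is computed by $\mathcal R_y$, which gives $A_Q\simeq A_y$.

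\textbf{Step 4: coefficient rings and invariance.} Restriction from $H=C_{\chi,y}^\circ$ to its reductive Levi factor (with the scaling $\mathbb C^\times$ included in the same way) is a homotopy equivalence, because the kernel is a connected unipotent group. So equivariant coefficient rings and the normal-fibre cohomology of Lemma~\ref{lem:normal-purity-base-change} are unchanged. Naturality of restriction of equivariance makes this compatible with $c_x^N$ and with the $\mathcal B$-action of Lemma~\ref{lem:residual-grading}. For invariance, the central-twist homomorphism factors through $S_M=\operatorname{Sym}(\mathfrak z(\mathfrak q)^*)$, which corresponds to $Z(Q)^\circ$. This group is central in $Q\supset\mathcal R_y$, so conjugation by any component of $\mathcal R_y$ fixes it pointwise, and the homomorphism is $A_Q$-invariant.

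\textbf{Step 5: common projector.} After base change, $A_Q=A_y$ acts $\A$-linearly on both lattices, and the normal map is equivariant for this action. The isotypic projector for $\rho_Q=\rho$ is therefore the same idempotent on both sides.

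\textbf{Main obstacle.} I expect the hardest step to be Step 4. The scaling stabilizer $C_{\chi,y}$ involves the extra $\mathbb C^\times$ factor, so one has to check that its Levi factor contains the relevant torus $A$ and the arc direction, and that component invariance holds at the level of coefficient homomorphisms rather than pointwise. My first approach would be to embed the $\mathbb C^\times$ factor via the cocharacter of $h$, i.e.\ $a\mapsto(\exp(\log a\cdot h)$-type, $a)$, so that it lands in the reductive part.
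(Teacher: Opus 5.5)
Your proposal is correct and follows essentially the same route as the paper. Both arguments take the Jacobson--Morozov decomposition $Z_G(y)=R_uZ_G(y)\rtimes Z_G(y,h,f)$, pass to $\sigma_{\rm g}$-fixed points, identify $\mathcal R_y$ through $h=(\sigma_{\rm g}-\sigma_0)/r$ and Kostant uniqueness, use contractibility of $\mathcal U_y$ for the coefficient comparison, and read off invariance in the reductive model. The step you flag as the main obstacle is handled in the paper exactly as you suggest, by setting $\widetilde{\mathcal R}_y=\{(l\lambda_y(a),a)\}$ with $\lambda_y$ the triple cocharacter; the remaining check is that for $(g,a)$ in the scaling stabilizer one has $g\lambda_y(a)^{-1}\in Z_G(\sigma_{\rm g},y)$, because $\lambda_y$ commutes with $\sigma_{\rm g}$ and scales $y$ by $a^2$, which gives $\widetilde H_y=\mathcal U_y\rtimes\widetilde{\mathcal R}_y$ containing $T_{\sigma,r}$ and with component group $A_y$.
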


\begin{proof}
\emph{The reductive factor.}
The homogeneous triple gives
$Z_G(y)=R_uZ_G(y)\rtimes Z_G(y,h,f)$;
see \cite[(4.1.1)(b)--(d)]{McNinch} for the associated-cocharacter
form of this decomposition.
One way to see the decomposition is to use the $h$-grading: on
$\mathfrak g^y$ the weights are nonnegative, its weight-zero part is
$\mathfrak g^{y,h,f}$, and its positive-weight part is the Lie algebra
of the unipotent radical.  Conjugation by the triple cocharacter
contracts that radical and retracts the centralizer onto the
triple centralizer; this also accounts for disconnected components.
Conjugation by $\exp(\mathbb C\sigma_{\rm g})$ preserves both factors.
Taking its fixed points in the unique product decomposition gives
\eqref{eq:component-stabilizers}, with reductive factor
$Z_G(y,h,f,\sigma_0)$.  The exponential isomorphism for complex
unipotent groups identifies $\mathcal U_y$ with the fixed subspace
of its Lie algebra, so $\mathcal U_y$ is connected.

An element of $Q$ centralizing $\sigma_{\rm g}$ centralizes
$h=(\sigma_{\rm g}-\sigma_0)/r$.  If it also centralizes $y$, it
centralizes the triple, so $Z_Q(\sigma_{\rm g},y)=\mathcal R_y$.
Finally $\nu_{\rm g}$ is central in $Q$, giving the equality with
$Z_Q(s_{\delta,{\rm g}},y)$.  The resulting component-group
identification is the one used in Langlands induction
\cite[Theorem~B.2(c) and Proposition~B.4]{Solleveld}; it does not
require equality of the full stabilizers.

\smallskip
\emph{The scaling stabilizer and its coefficients.}
Let $\lambda_y:\mathbb C^\times\to G$ be the cocharacter of the
triple, so $\operatorname{Ad}(\lambda_y(a))y=a^2y$.  Put
\[
 \widetilde H_y=Z_{Z_G(\sigma_{\rm g})\times\mathbb C^\times}(y),
 \qquad
 \widetilde{\mathcal R}_y
 =\{(l\lambda_y(a),a):l\in\mathcal R_y,\ a\in\mathbb C^\times\}.
\]
Since $\lambda_y$ commutes with $\sigma_{\rm g}$ and $\mathcal R_y$,
\[
 \widetilde H_y=\mathcal U_y\rtimes\widetilde{\mathcal R}_y,
 \qquad
 \widetilde{\mathcal R}_y
 =Z_{Z_Q(\sigma_{\rm g})\times\mathbb C^\times}(y)
 \simeq\mathcal R_y\times\mathbb C^\times.
\]
In the last expression the identification is the displayed embedding
$(l,a)\mapsto(l\lambda_y(a),a)$.
In particular $A=T_{\sigma,r}$ lies in this reductive factor.
Write $H=\widetilde H_y^\circ$ and
$H_Q=\widetilde{\mathcal R}_y^\circ$.
For either normal vertex object $\mathsf N$, restriction gives
\[
 H_H^\bullet(\mathrm{pt})\xrightarrow{\sim}
 H_{H_Q}^\bullet(\mathrm{pt}),\qquad
 H_H^\bullet(\mathsf N)\xrightarrow{\sim}
 H_{H_Q}^\bullet(\mathsf N).
\]
Indeed, the map of Borel constructions for $H_Q\subset H$ has
fibre $H/H_Q\simeq\mathcal U_y$, which is contractible and has
ordinary cohomology $\mathbb C$ in degree zero.  The sheaf on the
smaller Borel construction is pulled back from the larger one;
the projection formula gives the asserted comparison.  This
argument applies equally to the coefficient objects in the Springer
fibre diagram and in their mixed realizations.  The comparisons are
natural in morphisms and are linear over the global equivariant
coefficient ring.  They therefore preserve the normal arrow,
the Hecke action, completion, and subsequent coefficient base change.
They are also equivariant for
$\pi_0(\widetilde{\mathcal R}_y)=\pi_0(\widetilde H_y)=A_y$.

\smallskip
\emph{The invariant coefficient homomorphism.}
In the reductive model, $\sigma_{\rm g}$ and the universal central
twist in $\mathfrak z(\mathfrak q)$ are fixed by
$\widetilde{\mathcal R}_y$.  Evaluation on this family is therefore
$A_Q$-invariant.  Via the restriction isomorphism it defines an
$A_y$-invariant homomorphism from the full connected-stabilizer
coefficient ring.  The same is true after completion and on every
central one-parameter arc.  The component action becomes linear
after this base change, so its isotypic projector commutes with the
normal map.  Verdier duality replaces $\rho$ by $\rho^\vee$ only
when rewriting the costalk coefficient as a dual stalk coefficient.
\end{proof}

The passage to the reductive factor is essential.  For example, in
$GL_3$ take $y=E_{12}$, $r=\tfrac12$,
$\sigma=\operatorname{diag}(\tfrac12,-\tfrac12,-\tfrac12)$ and
$\sigma_0=\operatorname{diag}(0,0,-\tfrac12)$.  Then
$Q=GL_2\times GL_1$, but $1+bE_{32}$ centralizes $(\sigma,y)$
without belonging to $Q$ when $b\ne0$.  It need not fix the
Lie-algebra arc pointwise.  What is invariant, and what the argument
uses, is the induced homomorphism on equivariant coefficient rings.

\begin{proposition}[Component-group descent]
\label{prop:component-descent}
Let
\[
 A_y=\pi_0\bigl(Z_{G_\sigma}(y)\bigr)
\]
and let $\rho\in\operatorname{Irr}(A_y)$ be an enhancement occurring in the
generalized Springer block.  After completion at $\chi$ and pullback along a
formal arc whose coefficient homomorphism $\psi_{\eta,y}$ is
$A_y$-invariant, the
one-variable normal map and its Smith filtration are $A_y$-equivariant.
Proposition~\ref{prop:two-component-groups} verifies this condition for
the central Langlands arcs.  Applying
\[
 \operatorname{Hom}_{A_y}(\rho,-)
\]
commutes with completion and every step of the Smith filtration
after the invariant coefficient base change, as well as with the
specialized convolution action.  The resulting costalk module is the
geometric standard module $E_{y,\sigma,r,\rho}$, while the stalk module, with
the dual coefficient convention, is the contragredient of the standard
module labelled by $\rho^\vee$ for the dual cuspidal datum.  Consequently
Theorem~\ref{thm:general-local} holds componentwise for every enhancement
$\rho$ whenever its stated hypotheses hold.
\end{proposition}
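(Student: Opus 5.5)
The argument reduces everything to the finite group $A_y$ acting linearly on $\mathbb C[[t]]$-lattices, where $\operatorname{Hom}_{A_y}(\rho,-)$ is exact. The proof has three steps: equivariance, exactness of the projection, and identification of the projected modules.

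\emph{Step 1: equivariance.} The full stabilizer $C_{\chi,y}$ acts on the normal fibres $\mathsf N_y^{!}K$ and $\mathsf N_y^{*}K$. Naturality of the orbit adjunction \eqref{eq:normal-fibre-map} makes $c_y^N$ commute with this action. The connected component $H$ acts trivially on $H_H^\bullet$, so the action on the equivariant hypercohomology factors through $\Gamma=\pi_0(C_{\chi,y})$, which surjects onto $A_y$. Before base change this action is semilinear over $R_y=H^\bullet_H(\mathrm{pt})$, because $\Gamma$ acts on $R_y$ by conjugation. If $\psi_{\eta,y}$ is $A_y$-invariant, then after completion at $\chi$ and tensoring along $\psi_{\eta,y}$ the action becomes $\A$-linear on $\mathsf L^\eta_{!}$ and $\mathsf L^\eta_{*}$, and $c^\eta$ is $A_y$-equivariant. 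Invariance of $\psi_{\eta,y}$ for the Langlands arcs is exactly the last assertion of Proposition~\ref{prop:two-component-groups}; it is obtained by passing to the reductive factor $\widetilde{\mathcal R}_y$.

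\emph{Step 2: exactness of the projection.} The group $A_y$ is finite and we work over $\mathbb C$, so the averaging idempotent $e_\rho$ lies in $\A[A_y]$. Hence $\operatorname{Hom}_{A_y}(\rho,-)=\operatorname{Hom}_{\mathbb C}(\rho,e_\rho(-))$ is an exact functor. It commutes with completion, with $\A$-base change, with multiplication by $t^n$ and with reduction modulo $t$. It also preserves freeness, since a direct summand of a finite free $\A$-module is free.

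Now let $F^n=((c^\eta)^{-1}(t^n\mathsf L_*)+t\mathsf L_!)/t\mathsf L_!$. Because $c^\eta$ is equivariant and $t$ is central, $e_\rho$ commutes with forming $(c^\eta)^{-1}(t^n\mathsf L_*)$. Therefore the Smith filtration of the $\rho$-isotypic map is the $\rho$-isotypic part of $F^\bullet$. Equivalently, one can choose the Smith normal form blockwise on isotypic summands. The $A_y$-action commutes with the $\mathcal B_\eta$-action, because both come from natural transformations of the fibre functor and the algebra action is defined on $K$ itself. Consequently the specialized convolution action of $\mathcal E_\chi^{\rm res}$ and its grading survive projection. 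By Proposition~\ref{prop:ext-equivariance}, each layer remains annihilated by the radical.

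\emph{Step 3: identification.} I would cite Solleveld: Proposition~3.5(a), including equation (3.16), identifies the completed $\rho$-isotypic equivariant costalk with the completed standard module $E_{y,\sigma,r,\rho}$. Proposition~3.5(b), equations (3.17)--(3.18), identifies the stalk with the dual of the standard module for the dual cuspidal datum. In that identification, Verdier duality $\mathbb D\IC_\delta=\IC_{\delta^\dagger}$ turns $\operatorname{Hom}_{A_y}(\rho,-)$ on stalks into the $\rho^\vee$-labelled contragredient.

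The step I expect to be the main obstacle is making this matching of enhancements precise. One has to check that the same projector $e_\rho$ corresponds to $\rho$ on the source and to $\rho^\vee$ under the contragredient pairing on the target. I would verify this on the duality pairing $\mathsf L_!\otimes\mathsf L_*^{\dagger}\to\A$: it is $A_y$-invariant, so the $\rho$-part pairs perfectly with the $\rho^\vee$-part.

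With Steps 1--3 in place, Theorem~\ref{thm:general-local} applies summandwise. The multiplicity formula acquires the factor $\operatorname{Hom}_{A_y}(\rho_\gamma,-)$, exactly as displayed there.
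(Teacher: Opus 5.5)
Your proposal is correct and follows essentially the same route as the paper's proof. The invariant coefficient base change, justified through the reductive factor $\widetilde{\mathcal R}_y$, makes the $A_y$-action $\A$-linear. Semisimplicity of $\mathbb C[A_y]$ then gives an exact idempotent projector that commutes with $(c^\eta)^{-1}(t^n\mathsf L_*^\eta)$, with reduction modulo $t$, and with convolution. The stalk identification comes from dualizing the costalk identification: the paper cites Solleveld's Theorem~C(b) and dualizes, which amounts to your $A_y$-invariant Verdier pairing argument.

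One harmless slip: $\operatorname{Hom}_{A_y}(\rho,-)$ equals $\operatorname{Hom}_{A_y}(\rho,e_\rho(-))$, not $\operatorname{Hom}_{\mathbb C}(\rho,e_\rho(-))$. The latter is larger by a factor $\dim\operatorname{End}(\rho)$, but exactness is unaffected.
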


\begin{proof}
The stabilizer acts on the fibre diagram, the coefficient system, and all
adjunction morphisms.  For a Langlands family, restrict to
$\widetilde{\mathcal R}_y$ as in Proposition
\ref{prop:two-component-groups}; this does not change the connected
equivariant cohomology or its component action.  After the invariant
coefficient base change, the resulting
$A_y$-action on the two lattices is linear and commutes
with the costalk-to-stalk map, the formal parameter $t$, and
convolution.  Since $A_y$ is finite and the coefficient field has
characteristic zero, $\mathbb C[A_y]$ is semisimple.  Hence
$\operatorname{Hom}_{A_y}(\rho,-)$ is exact and is represented by a direct
summand idempotent.  For the completed canonical map
$c^\eta:\mathsf L_!^\eta\to\mathsf L_*^\eta$, write
\[
 c_\rho^\eta=\operatorname{Hom}_{A_y}(\rho,c^\eta):
 \operatorname{Hom}_{A_y}(\rho,\mathsf L_!^\eta)
 \longrightarrow
 \operatorname{Hom}_{A_y}(\rho,\mathsf L_*^\eta).
\]
Then
\[
 \operatorname{Hom}_{A_y}\!\left(
  \rho,(c^\eta)^{-1}(t^n\mathsf L_*^\eta)\right)
 =(c_\rho^\eta)^{-1}\!\left(t^n
   \operatorname{Hom}_{A_y}(\rho,\mathsf L_*^\eta)\right),
\]
and the same equality holds after reduction modulo $t$.
Thus isotypic projection commutes with the Smith filtration and its
associated graded.

The standard geometric-module identifications realize
$E_{y,\sigma,r,\rho}$ by
$\operatorname{Hom}_{A_y}(\rho,\mathsf L_!^\eta/t\mathsf L_!^\eta)$.  Apply
\cite[Theorem~C(b)]{Solleveld} with parameter $\rho$ and dualize its
finite-dimensional identity.  This identifies
$\operatorname{Hom}_{A_y}(\rho,
 \mathsf L_*^\eta/t\mathsf L_*^\eta)$ with the linear contragredient of the
standard module with label $\rho^\vee$ for the dual cuspidal coefficient
datum.
Thus the same $\rho$-projector is used on the canonical arrow.  When the
cuspidal datum is self-dual---in particular in the principal realization
used in the main theorem---both labels lie in the same Hecke block, and this
is exactly the dual-labelled contragredient target in
\eqref{eq:jantzen}.  If $\rho\simeq\rho^\vee$, it is literally the usual
self-contragredient standard module.
Since convolution commutes with the component-group action, every
identification in Theorem~\ref{thm:general-local} restricts to the indicated
summand.
\end{proof}

Let $Q$ be a Levi subgroup in a generalized Springer block and put
$S_Q=\operatorname{Sym}(\mathfrak z(\mathfrak q)^*)$.  Let $\delta_{\rm an}$
be an analytic tempered Levi module with fixed semisimple point
$s_{\delta,{\rm an}}$, and put
$s_{\delta,{\rm g}}=-s_{\delta,{\rm an}}$.  After the noncentral equivariant
parameters are specialized at $s_{\delta,{\rm g}}$, let
$\delta_{\rm g}=\operatorname{IM}_Q^*\delta_{\rm an}$ be the corresponding
geometric Levi module.  Let $\rho_Q$ be the enhancement defining
$\delta_{\rm g}$.  Denote by $\mathcal P_y^Q$ the generalized Springer fibre
for the Levi $Q$ at $y$.  Throughout the remainder of the paper,
$\dot{\mathcal L}$ denotes the restriction of the cuspidal coefficient
system $\mathcal L$ to the relevant Springer fibre (here $\mathcal P_y^Q$).
Let $\Hh_{Q,\mathrm{der}}$ denote the graded affine Hecke algebra of the
derived root system of $Q$.  Assume that multiplication gives the untwisted
central tensor decomposition
\[
 \Hh_Q\simeq \Hh_{Q,\mathrm{der}}\otimes_{\mathbb C}S_Q.
\]
This assumption holds in
the principal datum used in Theorem~\ref{thm:equal-parameter-jantzen}.

\begin{lemma}[Splitting of the Levi coefficient lattice]
\label{lem:levi-coefficient-splitting}
Under this tensor-decomposition assumption, the
$\rho_Q$-isotypic equivariant homology of the Levi Springer fibre is
canonically
\[
 \delta_{\rm g}\otimes_{\mathbb C}S_Q
\]
as a module for $\Hh_Q$.
Here $\Hh_{Q,\mathrm{der}}$ acts on the first factor and $S_Q$ acts on the
second by multiplication.
\end{lemma}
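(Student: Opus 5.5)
We first split the Levi equivariance group and then read off the isotypic piece.  Let $Z=Z(Q)^\circ$ and write the connected equivariance group for the Levi Springer fibre $\mathcal P_y^Q$ as $Z_Q(s_{\delta,{\rm g}},y)^\circ\times\mathbb C^\times$, or rather its universal version before specialization of the central parameters.  The centre $Z$ acts trivially on $\mathcal P_y^Q$ and on the restricted cuspidal coefficient system $\dot{\mathcal L}$, because $Z$ lies in every parabolic of the Levi cuspidal datum and acts trivially on the cuspidal local system by connectedness, exactly as in Lemma~\ref{lem:residual-grading}.  Choose a complementary reductive subgroup $Q_{\rm der}$-stabilizer factor, so that the equivariance group is isogenous to $Z\times H_{\rm der}$.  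Finite kernels contribute no higher cohomology with complex coefficients.  The Künneth formula for equivariant Borel--Moore homology then gives
\[
 H^{Z\times H_{\rm der}}_\bullet(\mathcal P_y^Q,\dot{\mathcal L})
 \simeq H^{H_{\rm der}}_\bullet(\mathcal P_y^Q,\dot{\mathcal L})\otimes_{\mathbb C}H_Z^\bullet(\mathrm{pt}),
\]
and $H_Z^\bullet(\mathrm{pt})=\operatorname{Sym}(\mathfrak z(\mathfrak q)^*)=S_Q$.

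The identification is compatible with the component group.  Proposition~\ref{prop:two-component-groups} shows that $A_Q$ acts through the reductive factor and fixes the central twist.  Hence $A_Q$ acts trivially on the $S_Q$ factor, and taking $\operatorname{Hom}_{A_Q}(\rho_Q,-)$ commutes with the tensor factor.  After the noncentral equivariant parameters are specialized at $s_{\delta,{\rm g}}$, the $\rho_Q$-isotypic part of the first factor is, by Solleveld's geometric standard-module construction for the derived Levi algebra, the module $\delta_{\rm g}$.  This requires the Iwahori--Matsumoto transport of Lemma~\ref{lem:analytic-geometric-transport}, applied to $Q$, to match $\operatorname{IM}_Q^*\delta_{\rm an}$.

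It remains to match the algebra actions.  Under the assumed decomposition $\Hh_Q\simeq\Hh_{Q,\mathrm{der}}\otimes S_Q$, the generators of $S_Q$ are central.  Geometrically they act through the equivariant Chern classes of $Z$, that is, by multiplication on $H_Z^\bullet(\mathrm{pt})$; this is Lusztig's description of the polynomial part, in Solleveld's normalization.  The derived generators $N_w$ ($w\in W_Q$) and $\xi\in(\mathfrak t/\mathfrak z(\mathfrak q))^*$ act by convolution operators that are pulled back from the $H_{\rm der}$-equivariant theory.  They therefore act on the first factor only.

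The main obstacle is this compatibility of the convolution action with the Künneth splitting.  One must check that the Euler-normalized convolution operators of Proposition~\ref{prop:general-fixed-slice} involve no $Z$-weights.  I would argue this from the triviality of the $Z$-action on the Levi Steinberg-type diagram, which makes all normal bundles $Z$-fixed with trivial $Z$-weights.  The convolution kernels are then $Z$-equivariant classes of the form $\kappa\otimes1$.  Canonicity follows because the splitting is independent of the choice of complement: any two complements differ by an automorphism acting trivially on equivariant cohomology of the connected group.
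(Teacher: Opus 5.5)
Your proposal is correct and takes essentially the same route as the paper's proof. Split off the trivially acting connected centre $Z(Q)^\circ$ up to a finite central isogeny to get the Künneth factor $S_Q$; specialize the derived parameters at $s_{\delta,{\rm g}}$ and apply the $\rho_Q$-projector to obtain $\delta_{\rm g}$ from Solleveld's realization; then let $\Hh_{Q,\mathrm{der}}$ act by convolution on the first factor and the central generators by Chern-class multiplication on the second. The one point the paper stresses, which you should state explicitly rather than leave implicit in your word ``after'', is that the projector may be applied only after this specialization: before it, the $A_Q$-action on the derived coefficient ring can be semilinear, and isotypic projection need not commute with specialization.
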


\begin{proof}
Let $H_Q=Z_{Q\times\mathbb C^\times}(\sigma,y)^\circ$.
The connected centre of $Q$ acts trivially on the Springer fibre and its
coefficient system.  Up to a finite central isogeny, which does not
change equivariant cohomology over $\mathbb C$, its Borel construction
splits off the central factor.  Thus equivariant homology is the tensor
product of the derived-parameter model and $S_Q$.

First specialize the derived parameters at the fixed Levi point
$s_{\delta,{\rm g}}$.  The full component group fixes this point and
acts trivially on $\mathfrak z(\mathfrak q)$, so it now acts
$S_Q$-linearly.  Only at this stage apply
$\operatorname{Hom}_{A_Q}(\rho_Q,-)$.
The first factor becomes $\delta_{\rm g}$ by the change-of-groups
isomorphism (B.1), Proposition~B.4(a), and Theorem~C of
\cite{Solleveld}.  This gives the displayed tensor product without
applying an isotypic projector to a semilinear module over a
maximal-torus coefficient ring.

The convolution operators for $\Hh_{Q,\mathrm{der}}$ act on the first
factor.  A central polynomial $\xi$ acts on the second factor by
multiplication by its equivariant Chern class.  Hence the factorization
is $\Hh_Q$-linear.  It is canonical after the geometric realization of
$\delta_{\rm g}$ and its enhancement have been fixed.
\end{proof}

Choose the homogeneous $\mathfrak{sl}_2$-triple $(y,h,f)$ from the graded
Slodowy setup and put
$\sigma_0=\sigma-rh$, which centralizes the triple.  Let $Q$ be the Langlands
Levi and choose the Langlands parabolic for which
$\operatorname{Re}\sigma_0$ is negative on its nilradical.  After
$Q^\circ$-conjugating the parameter if necessary, let
$S\subset Z_{Q\times\mathbb C^\times}(y)^\circ$ be a maximal torus containing
$T_{\sigma,r}$, and let $\epsilon_-$ be Solleveld's
determinant class for this Langlands parabolic.

\begin{lemma}[Completed Langlands-induction comparison]
\label{lem:two-induction-square}
The equivariant induction
map
\begin{equation}\label{eq:solleveld-completed-induction}
 \Hh_G\otimes_{\Hh_Q}
 H^S_\bullet(\mathcal P_y^Q,\dot{\mathcal L})
 \longrightarrow H^S_\bullet(\mathcal P_y,\dot{\mathcal L})
\end{equation}
is Hecke- and $H_S^*(\mathrm{pt})$-linear, injective, and has cokernel
annihilated by $\epsilon_-$.  Moreover $\epsilon_-(\sigma,r)\ne0$.  Hence
\eqref{eq:solleveld-completed-induction} becomes an isomorphism after
completion at $(\sigma,r)$, compatibly with the transpose correspondence,
Verdier duality, and the standard Hecke anti-involution.

The class $\epsilon_-$ is not the Euler class of the attractive normal slice
defined by the opposite parabolic.  The latter vanishes on every nonzero
normal direction whose equivariant character vanishes at $(\sigma,r)$.  This
vanishing is precisely part of the Jantzen degeneration, and the local Euler
class is never inverted in this comparison.
\end{lemma}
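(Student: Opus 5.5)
The proof rests almost entirely on inputs the paper has already recorded from \cite[Appendix~B]{Solleveld}. The plan is to cite those facts with all coefficient and component structures kept, and then add the completion argument and the remark about the Euler class.

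\emph{Linearity, injectivity and the cokernel bound.}
The map \eqref{eq:solleveld-completed-induction} is built from convolution correspondences over the parabolic induction diagram $\mathcal P_y^Q\to\mathcal P_y$. Hecke linearity is the naturality of convolution with respect to the induction correspondence. $H_S^*(\mathrm{pt})$-linearity holds because every map in that diagram is $S$-equivariant. Injectivity and the statement that $\epsilon_-$ annihilates the cokernel are Solleveld's Theorem~B.2(a), and $\epsilon_-(\sigma,r)\ne0$ is his Lemma~B.3. I would check that his hypotheses hold in the present normalization. First, $S$ is a maximal torus of $Z_{Q\times\mathbb C^\times}(y)^\circ$ containing $T_{\sigma,r}$. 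Second, the parabolic is the Langlands one, on whose nilradical $\operatorname{Re}\sigma_0$ is negative; Lemma~\ref{lem:analytic-geometric-transport} fixes the sign conventions, and the graded triple is the one from Proposition~\ref{prop:general-graded-slodowy}. The $Q^\circ$-conjugation is harmless because equivariant homology is invariant under it.

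\emph{Completion.}
Both sides are finitely generated over $H_S^*(\mathrm{pt})$: the source because $\Hh_G$ is finite over $\Hh_Q$ and Springer fibre homology is finite, and the target likewise. Completion at the maximal ideal of $(\sigma,r)$ is therefore exact. Since $\epsilon_-$ is a unit there, the completed cokernel is killed by a unit and vanishes. Injectivity persists by flatness, so the completed map is an isomorphism.

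\emph{Compatibility with duality.}
Transpose correspondences and Verdier duality interchange the $!$- and $*$-pushforwards along the induction diagram. The standard anti-involution $h\mapsto h^*$ corresponds to transposing convolution kernels. So I would take the dual induction map, which is Solleveld's stalk version, Proposition~3.5(b), and check that it is the transpose of \eqref{eq:solleveld-completed-induction} under the equivariant Verdier pairing. Its completion is again an isomorphism by the same argument. This is the step I expect to be the main obstacle. One has to track that the pairing is perfect after completion and that the same $\epsilon_-$, or its image under duality, governs both sides. I would handle this by checking that $\epsilon_-$ is the determinant of the $Q$-relative tangent representation, and that duality changes it only by a sign and a character twist. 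Either change keeps its value at $(\sigma,r)$ nonzero.

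\emph{The Euler-class remark.}
The attractive normal slice of Proposition~\ref{prop:general-graded-slodowy} has $T_{\sigma,r}$-weights that vanish identically on its directions, since $N_{\sigma,r}(y)\subset\mathfrak g_{\sigma,2r}$ is fixed. So its $S$-equivariant Euler class evaluates to zero at $(\sigma,r)$ whenever some normal direction has trivial character there. By contrast, $\epsilon_-$ comes from the moving weights on $\mathfrak u^-/\mathfrak u^-_{\sigma}$, which are nonzero at $(\sigma,r)$ by Lemma~B.3. Only $\epsilon_-$ is inverted, and the slice Euler class never is. This is consistent with the normal-fibre construction \eqref{eq:normal-slice-arrow}, which involves no division.
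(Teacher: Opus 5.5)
Your proposal follows the paper's proof: Solleveld's Theorem~B.2(a) and Lemma~B.3 are applied under the Langlands sign condition, the map is completed at $(\sigma,r)$ where $\epsilon_-$ becomes a unit, duality comes from the transposed Gysin correspondence, and the slice Euler factors vanish because the slice directions lie in $\mathfrak g_{\sigma,2r}$ (your $T_{\sigma,r}$-fixedness argument is equivalent to the paper's $a-r(m+2)=0$). The step you flag as the main obstacle is lighter than you expect: the projection formula makes the transposed, Verdier-dual map adjoint to the induction map, and because the completed lattices are finite free, the inverse transpose of the completed isomorphism is again an isomorphism, so you do not need to track how $\epsilon_-$ changes under duality.
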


\begin{proof}
The Langlands-chamber sign condition is exactly the first case of
\cite[Lemma~B.3]{Solleveld}, which first proves
$\epsilon_-(\sigma,r)\ne0$.  The hypotheses of
\cite[Theorem~B.2(a)]{Solleveld} are therefore satisfied; that theorem gives
the injection and says that its image contains $\epsilon_-$ times the target.
Thus $\epsilon_-$ is a unit in the completed local
coefficient ring, and the injection becomes an isomorphism.

The map is induced by a proper Gysin correspondence.  Transposing that
correspondence and applying Verdier duality replaces
$\dot{\mathcal L}$ by $\dot{\mathcal L}^{\vee}$ and gives the standard Hecke
anti-involution; the projection formula makes the two maps adjoint.  The
present lemma is deliberately proved over the connected stabilizer, before
any isotypic projection.  In Step~1 of
Proposition~\ref{prop:general-coefficient-arc}, the same correspondence is
used with the full reductive scaling stabilizer
$C_{Q,y}=\widetilde{\mathcal R}_y$ of Proposition
\ref{prop:two-component-groups}.  Restriction identifies its target
cohomology with that for the full point stabilizer, without asserting
that the latter preserves the induction correspondence.  Step~2 makes the component
action linear over the completed central coefficient ring and applies the
isotypic projector.  No component-group compatibility is needed in the
argument above.

On an irreducible $\mathfrak{sl}_2$-summand of highest weight
$m$ and $\sigma_0$-weight $a$, its lowest-weight line contributes the
equivariant Euler factor $a-r(m+2)$ to the attractive normal slice.  A vector in
$\mathfrak g^f\cap\mathfrak g_{\sigma,2r}$ satisfies
$a=r(m+2)$, so this factor is zero.  By contrast, Solleveld's
$\epsilon_-$ is computed with the Langlands parabolic and is nonzero by the
preceding chamber argument.  Thus no vanishing local Euler factor is divided
out in the completed induction comparison.
\end{proof}

Retain $y$ and $r$ from the fixed-slice construction.  Let $\Hh_G$ be the
graded affine Hecke algebra in the geometric realization attached to a generalized
Springer cuspidal datum, let $Q$ be the Levi subgroup used for parabolic
induction, and let $\delta_{\rm an}$ be the analytic tempered inducing
module with fixed semisimple point $s_{\delta,{\rm an}}$.  Put
\[
 \delta=\delta_{\rm g}=\operatorname{IM}_Q^*\delta_{\rm an},
 \qquad s_{\delta,{\rm g}}=-s_{\delta,{\rm an}}.
\]
Let $\nu=\nu_{\rm g}\in\mathfrak z(\mathfrak q)$ be a real geometric
central twist and put $\sigma=\sigma_{\rm g}=s_{\delta,{\rm g}}+\nu$.
Assume the untwisted central tensor splitting of
Lemma~\ref{lem:levi-coefficient-splitting}, and put
\[
 S_Q=\operatorname{Sym}(\mathfrak z(\mathfrak q)^*)
\]
and let $\widehat S_{Q,\nu}$ be its completion at the maximal ideal defined by
$\nu$.  Put $\delta_{S_Q}=\delta\otimes_{\mathbb C}S_Q$; this is the
universal unramified-twist family of $\delta$.  Choose a maximal torus
$S\subset Z_{Q\times\mathbb C^\times}(y)^\circ$ containing
$T_{\sigma,r}$, and let
\[
 \kappa_\delta:H_S^*(\mathrm{pt})\longrightarrow\widehat S_{Q,\nu}
\]
fix the remaining semisimple coordinates at $s_{\delta,{\rm g}}$ and retain
the universal $\mathfrak z(\mathfrak q)$-coordinate.  Let
$A_Q=\pi_0(Z_Q(s_{\delta,{\rm g}},y))$ and let $\rho_Q$ be the enhancement
defining $\delta$.  Central twists do not change this group.
To make the completions explicit, set
\[
 \mathcal X_{Q,S_Q}=\Hh_G\otimes_{\Hh_Q}\delta_{S_Q},\qquad
 \widehat{\mathcal X}_{Q,\nu}
 =\widehat S_{Q,\nu}\otimes_{S_Q}\mathcal X_{Q,S_Q},
\]
and define the geometric lattice in the following order.
Put $H_Q=C_{Q,y}^\circ$, the connected stabilizer of $y$ in
$Z_Q(\sigma)\times\mathbb C^\times$, and let
$\kappa_\delta^{H_Q}:H_{H_Q}^\bullet(\mathrm{pt})\to\widehat S_{Q,\nu}$
be evaluation on the universal central twist.  Set
\[
 \widehat{\mathcal C}_{y,\rho_Q,\nu}
 =\operatorname{Hom}_{A_Q}\!\left(
 \rho_Q,\,
 \widehat S_{Q,\nu}\otimes_{H_{H_Q}^\bullet(\mathrm{pt}),
                            \kappa_\delta^{H_Q}}
 H_\bullet^{H_Q}(\mathcal P_y,\dot{\mathcal L})\right).
\]
The component action is linear after base change, since it fixes the
central coordinate.  The maximal torus $S$ is used only for faithful
flatness in the connected-equivariant comparison; no projector is
applied to an arbitrary $S$-equivariant module before base change.
Let $\eta\in X_*(Z(Q)^\circ)\otimes_{\mathbb Z}\mathbb Q$ be regular and
rescale it positively to be integral.  If $\eta$ lies in the contracting
chamber and the graded Slodowy slice has positive dimension, let $A_\eta$ be the effective
one-dimensional torus that it generates.  Choose a character $\xi_+$ on the
positive ray of $A_\eta$, and put
$z_\eta=c_1(\xi_+)$ and
$c_\eta=\langle\xi_+,\eta\rangle>0$.
Here and below, a primitive integral normalization of a map over $\A$ means
that its matrix entries have no common factor $t$, or equivalently that its
image is not contained in $t$ times the target lattice.  For a self-dual
cuspidal datum the maps below lie in one block; in general they pair a block
with its linear-dual block.

\begin{proposition}[Coefficient arc and completed costalk]
\label{prop:general-coefficient-arc}
The universal comparison has the following three consequences.
For the comparison of maps in \textup{(2)}, assume that the perverse IC
summands of the localized complex have pure polarizable realizations.
This hypothesis is verified for the principal datum in the proof of
Theorem~\ref{thm:equal-parameter-jantzen}.
\begin{enumerate}
\item There is an isomorphism of $\widehat S_{Q,\nu}$-lattices
\begin{equation}\label{eq:universal-coefficient-family}
 \Phi_!: \widehat{\mathcal X}_{Q,\nu}
 \xrightarrow{\ \sim\ }
 \widehat{\mathcal C}_{y,\rho_Q,\nu}.
\end{equation}
The target is the $\rho_Q$-isotypic completed normal costalk of
$K_{\sigma,r}$, with the regrading \eqref{eq:normal-point-objects}.
There is a corresponding isomorphism $\Phi_*$ with the completed
normal stalk for the dual cuspidal coefficient datum.

\item Pullback by
\[
 f\longmapsto f(\nu+t\eta)
\]
gives the algebraic Jantzen family.  Under $\Phi_!$ and $\Phi_*$, its
primitive standard-to-contragredient map and the normal map
\eqref{eq:normal-fibre-map} differ by a unit of $\A$.

\item Suppose in addition that $\eta$ lies in the positive contracting
chamber and the slice has positive dimension.  If $t>0$ is oriented in the direction $\eta$, then
\begin{equation}\label{eq:positive-coefficient-arc}
 z_\eta\longmapsto c_\eta t,
 \qquad c_\eta=\langle\xi_+,\eta\rangle>0.
\end{equation}
This orientation-preserving change of coordinate preserves Smith exponents
and multiplies every order-$n$ leading coefficient by the positive scalar
$c_\eta^n$.
\end{enumerate}
\end{proposition}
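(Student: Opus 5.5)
The plan is to prove the three items in order. Item (1) is assembled from Solleveld's completed induction, Item (2) is a rank-one uniqueness argument over $\K$ together with a primitivity check, and Item (3) is a direct evaluation of characters.

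\emph{Step 1: the universal costalk isomorphism.} Lemma~\ref{lem:levi-coefficient-splitting} identifies the $\rho_Q$-isotypic Levi equivariant homology with $\delta_{\rm g}\otimes S_Q$ as an $\Hh_Q$-module. Inducing gives $\mathcal X_{Q,S_Q}$. Lemma~\ref{lem:two-induction-square} then supplies the injective, Hecke-linear induction map \eqref{eq:solleveld-completed-induction} over the connected stabilizer, with cokernel killed by $\epsilon_-$, and $\epsilon_-(\sigma,r)\ne0$. After completion at $\nu$, i.e.\ base change along $\kappa_\delta$, this map becomes an isomorphism.

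Next, Proposition~\ref{prop:two-component-groups} lets us replace the connected stabilizer by the reductive factor $\widetilde{\mathcal R}_y$ without changing cohomology. After the invariant base change $\kappa_\delta^{H_Q}$ the $A_Q=A_y$ action is linear, so the $\rho_Q$ projector can be applied. The exactness argument of Proposition~\ref{prop:component-descent} shows that this projector commutes with completion.

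Solleveld's Proposition~3.5(a), equation~(3.16), identifies equivariant Springer-fibre homology with the equivariant costalk of $K$ at $y$. Comparing with \eqref{eq:normal-point-objects} converts the point costalk into the normal costalk, up to the shift $[d]$. This yields $\Phi_!$. The stalk map $\Phi_*$ is obtained the same way from Proposition~3.5(b), equations (3.17)--(3.18), applied to the dual datum $\dot{\mathcal L}^\vee$. That this target is the contragredient follows from the transpose/Verdier compatibility in Lemma~\ref{lem:two-induction-square}, which also intertwines the anti-involution $h\mapsto h^*$.

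\emph{Step 2: comparing the two maps.} Pull everything back along $f\mapsto f(\nu+t\eta)$. Since evaluation factors through $\widehat S_{Q,\nu}$, this reproduces the standard lattice \eqref{eq:standard-lattice-definition}, and likewise its dual-label counterpart. The resulting maps are then compared as follows.

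\begin{itemize}
\item $\Phi_*^{-1}\circ c^\eta\circ\Phi_!$ is an $\A$-linear Hecke map $X_{\gamma,\A}\to X_{\gamma^\dagger,\A}^\vee$. It is Hecke-linear by the naturality square in the proof of Proposition~\ref{prop:ext-equivariance}.
\item Over $\K$ the generic fibre is a standard module at the regular, non-real twist $\nu+t\eta$. There it is irreducible, so its Hom space into the contragredient is one-dimensional over $\K$. Hence this map agrees with $I_t$ up to a scalar in $\K^\times$.
\item It is nonzero over $\K$ because equivariant localization (Lemma~\ref{lem:cone-calculation}) makes $c$ generically invertible.
\end{itemize}

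The scalar is therefore of the form $t^m\cdot(\text{unit})$. To see that $m=0$ with $I_t$ primitive, we need $c^\eta$ itself to be primitive. Its reduction mod $t$ is nonzero because the head summand $\IC_\gamma$ is point-supported on the slice, and on that summand the normal map is the identity (the exponent-zero analysis in Proposition~\ref{prop:ext-equivariance}). Both maps are then primitive and agree up to a $\K^\times$ scalar, so they agree up to a unit of $\A$.

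\emph{Step 3: the coordinate change.} The coefficient ring of $A_\eta$ is $\mathbb C[z_\eta]$. Along the arc, $\chi+t(\eta,0)$ restricted to the residual torus is $t\eta$, so the character $\xi_+$ evaluates to $t\langle\xi_+,\eta\rangle=c_\eta t$. Positivity of $c_\eta$ is exactly the chamber condition of Proposition~\ref{prop:general-graded-slodowy}. A diagonal entry $z^n$ becomes $c_\eta^n t^n$, which gives the remaining assertions exactly as in the completion paragraph of Lemma~\ref{lem:graded-smith}.

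\emph{Expected main obstacle.} The hardest point is the generic irreducibility used in Step~2. The twist lies in $\mathfrak z(\mathfrak q)\otimes\K$, and we must check that a regular $\eta$ makes $\nu+t\eta$ avoid every reducibility wall of $\Hh\otimes_{\Hh_Q}\delta$. These walls are cut out by roots outside $R_Q$, and regularity forces $\langle\alpha,\eta\rangle\ne0$ for such roots. The first thing to try is to argue via the intertwining-operator (Kato/Rogawski) criterion over $\K$. If that is awkward, the fallback is to bypass irreducibility entirely: verify that $\Phi_*$ transports the geometric pairing to the algebraic contravariant pairing built from the transpose correspondence, which gives an identity of maps rather than just proportionality.
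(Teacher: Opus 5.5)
Your Steps~1 and~3 follow the paper's Steps~1--4. One detail is missing from your Step~1. Solleveld's equation~(3.16) concerns the localized complex $K_{\sigma,r}$, so before invoking it the paper first applies completed localization from $\mathcal P_y$ to its $\exp(\mathbb C(\sigma,r))$-fixed locus. This uses Solleveld's Proposition~A.2, extended along a Bruhat stratification.

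\textbf{Step 2 takes a different route.} The paper never claims the generic fibre is irreducible. It proves only $\operatorname{End}_{\Hh_{G,\K}}(X_\K)=\K$:
\begin{itemize}
\item relative regularity makes the generalized-weight blocks $w(\Omega_\delta+\nu+t\eta)$, $w\in W^Q$, pairwise disjoint;
\item so any endomorphism preserves $1\otimes\delta$, where it is scalar by Schur.
\end{itemize}
It then needs a separate input, the product formula in Solleveld's Proposition~4.3 on induced modules, to know that the algebraic intertwiner is invertible over $\K$. Only then can the Hom space be identified with this endomorphism ring.

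Your route asks for more but gives more. Once $X_{\overline\K}$ is absolutely irreducible, any nonzero map to a module of the same rank is an isomorphism. Generic invertibility of both $I_t$ and $c^\eta$ is then automatic. Moreover $c^\eta\otimes\K\neq0$ already follows from the primitivity you prove.

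This removes your appeal to Lemma~\ref{lem:cone-calculation}, which presupposes a contracting action. Part~(2) is stated for every relatively regular $\eta$ and is applied to a non-dominant line in Appendix~\ref{app:counterexample}. If you keep that appeal, replace it by localization for the torus of $\eta$: the normal weights are roots outside $R_Q$, so the vertex is the only fixed point on the slice. Note also that you need irreducibility over $\overline\K$, not just over $\K$, to make the Hom space one-dimensional.

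\textbf{The obstacle you flag yields to the tool you name.} The argument also reuses the paper's disjointness of blocks.
\begin{itemize}
\item In the paper's normalization, $\tau_s=N_s\alpha_s-1\in\Hh$ satisfies $\tau_s\xi=s(\xi)\tau_s$ and $\tau_s^2=1-\alpha_s^2$.
\item Hence $\tau_s$ maps the generalized $\mu$-weight space injectively into the generalized $s\mu$-weight space whenever $\alpha_s(\mu)\neq\pm1$.
\item Let $Y\subset X_{\overline\K}$ be a nonzero submodule meeting the block of $w\in W^Q$, $w\neq1$, at a weight $\mu=w(\omega+\nu+t\eta)$ with $\omega\in\Omega_\delta$.
\item Choose a simple $s$ with $\ell(sw)<\ell(w)$. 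Then $sw\in W^Q$ and $\beta=w^{-1}\alpha_s\notin R_Q$.
\item So $\alpha_s(\mu)=\beta(\omega+\nu)+t\langle\beta,\eta\rangle$ has nonzero $t$-coefficient and cannot equal $\pm1$.
\item Descending in length, $Y$ meets the identity block $1\otimes\delta_{\overline\K}$.
\item That intersection is a nonzero $\Hh_{Q,\overline\K}$-submodule, hence all of $1\otimes\delta_{\overline\K}$ by absolute irreducibility of $\delta$ (Burnside).
\item Since $1\otimes\delta_{\overline\K}$ generates $X_{\overline\K}$, we get $Y=X_{\overline\K}$.
\end{itemize}
With this in hand, your fallback via pairings is unnecessary.
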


In part~(1), completed localization and proper base change give the stated
costalk identification.  Transposition and Verdier duality give $\Phi_*$.
Both isomorphisms are constructed over the universal completed coefficient
ring, before a deformation direction is chosen.  In part~(2), the linear
contragredient pairing corresponds to the geometric intersection pairing.
Part~(3) records the orientation needed later to compare leading terms, not
merely Smith exponents.

\begin{proof}
\smallskip
\noindent\emph{Step 1: completion over the connected stabilizer.}
Lemma~\ref{lem:two-induction-square} makes Solleveld's parabolic-induction map
an isomorphism after completion at $(\sigma,r)$.  Put
\[
 C_{Q,y}=Z_{Z_Q(\sigma)\times\mathbb C^\times}(y),\qquad
 C_{Q,y}^{\circ}=Z_{Z_Q(\sigma)\times\mathbb C^\times}(y)^{\circ}.
\]
The component group of $C_{Q,y}$ is canonically $A_Q$, hence also the enhancement group
of Proposition~\ref{prop:two-component-groups}.  That proposition also
identifies $C_{Q,y}$ with a Levi factor of the full scaling stabilizer
$\widetilde H_y$.  Its restriction isomorphisms identify the target
coefficient ring and fibre homology with those for
$\widetilde H_y^\circ$, naturally for the Hecke action.  Thus the
comparison may be proved with $C_{Q,y}$-equivariance throughout.
The use of a maximal torus
$S\subset C_{Q,y}^{\circ}$ is only a faithfully flat change of equivariance.
Indeed, Solleveld's change-of-groups isomorphism
\cite[equation~(B.1)]{Solleveld}
identifies the $S$-equivariant induction map with base change of the map
defined by the same proper correspondence in
$C_{Q,y}^{\circ}$-equivariant homology.  Since
$H_S^*(\mathrm{pt})$ is finite free, hence faithfully flat, over
$H_{C_{Q,y}^{\circ}}^*(\mathrm{pt})$, Lemma~B.3 and
Theorem~B.2(a) of~\cite{Solleveld} show that this connected-equivariant map
becomes an isomorphism after completion at $(\sigma,r)$.

\smallskip
\noindent\emph{Step 2: the enhancement and the universal central parameter.}
The full reductive group $C_{Q,y}$, including its components, acts on the
correspondence.  Before the final base
change its action on the coefficient ring is semilinear, so one must not
apply an $A_Q$-projector directly to an arbitrarily chosen
$S$-equivariant model.  The point $(\sigma,r)$ is fixed and its completion
ideal is stable.  Conjugation by $Q$ fixes $\mathfrak z(\mathfrak q)$
pointwise; after $\kappa_\delta$ specializes the remaining coordinates and
retains only $\widehat S_{Q,\nu}$, the component action is therefore
$\widehat S_{Q,\nu}$-linear.  Since $\mathbb C[A_Q]$ is semisimple, applying
$\operatorname{Hom}_{A_Q}(\rho_Q,-)$ is exact and commutes with completion.
Now use Lemma~\ref{lem:levi-coefficient-splitting} to identify the source,
before completion, with
$\Hh_G\otimes_{\Hh_Q}(\delta\otimes S_Q)$.  The central generator
$\xi\in\mathfrak z(\mathfrak q)^*$ acts geometrically by its equivariant
first Chern class, so pullback along $f\mapsto f(\nu+t\eta)$ changes its action
to
\[
 \xi(\nu)+t\langle\xi,\eta\rangle.
\]
This is the universal unramified-twist family and proves
\eqref{eq:universal-coefficient-family}.

\smallskip
\noindent\emph{Step 3: orientation of the one-parameter coordinate.}
For the positive-coordinate assertion, let $\eta_0$ be the primitive
cocharacter generating the positive ray that defines $A_\eta$.  The chosen
integral cocharacter has the form $m\eta_0$ with $m>0$.  The positive normal
weights identify the ample orbifold class on the projectivized punctured
slice with the positive character ray, and the preceding coefficient formula
gives
\[
 z_\eta\longmapsto
 t\langle\xi_+,m\eta_0\rangle=c_\eta t,
 \qquad c_\eta>0.
\]
A different stack normalization multiplies the positive generator by a
positive rational number.  For any coefficient matrix $C$, pulling back a
leading term $z_\eta^nC$ therefore
gives $t^n(c_\eta^nC)$.  The positive factor $c_\eta^n$ does not change the
Smith exponent.  A point slice needs no equivariant coordinate: its
normal map is the identity.

\smallskip
\noindent\emph{Step 4: identification of the costalk and stalk lattices.}
Let
\[
 j_y:\mathcal P_y^{\exp(\mathbb C(\sigma,r))}
       \hookrightarrow\mathcal P_y,
 \qquad
 R_S=\widehat{H_S^\bullet(\mathrm{pt})}_{(\sigma,r)}.
\]
Proposition~A.2 of~\cite{Solleveld} is stated for an affine $S$-variety.
The same completed-localization assertion holds for the present Springer
fibre: intersecting with an $S$-stable Bruhat stratification of the ambient
flag variety gives a finite stratification by affine $S$-varieties, so one
applies that proposition to the strata and then uses the localization long
exact sequences in equivariant Borel--Moore homology.  Consequently, since
$(\sigma,r)\in\operatorname{Lie}S$,
completed localization gives an isomorphism
\[
 R_S\otimes_{H_S^\bullet(\mathrm{pt})}
 H_\bullet^S\!\left(
   \mathcal P_y^{\exp(\mathbb C(\sigma,r))},j_y^*\dot{\mathcal L}
 \right)
 \xrightarrow{\sim}
 R_S\otimes_{H_S^\bullet(\mathrm{pt})}
 H_\bullet^S(\mathcal P_y,\dot{\mathcal L}).
\]
Equation~(3.16) in the proof of \cite[Proposition~3.5(a)]{Solleveld} is an
identity in the equivariant derived category.  Restricting it to $S$, taking
equivariant hypercohomology, and retaining the shifts in the computation
immediately following equations~(3.16)--(3.19) identifies the source with
the completed equivariant point costalk of $K_{\sigma,r}$.
The comparison descends from $S$ to $H_Q$ by the faithfully flat
change of coefficients in Step~1.  Restriction from
$\widetilde H_y^\circ$ to $H_Q$ then identifies it with the
connected-stabilizer model of \eqref{eq:normal-arc-lattices}.
These identifications commute with the central coefficient
homomorphism and the component projector by Proposition
\ref{prop:two-component-groups}.  The normal costalk differs by the common shift in
\eqref{eq:normal-point-objects}; this regrading leaves the ungraded
Hecke module and its integral lattice unchanged.  Theorem~2.5
of~\cite{Solleveld} is used here
only to identify the completed convolution action with the completed Hecke
action.

Apply the same argument to the dual coefficient system and use
\cite[Proposition~3.5(b)]{Solleveld}.  Because the completed costalk is a
finite free induced family, dualization is exact.  The inverse transpose of
the completed induction isomorphism is therefore the asserted stalk map
$\Phi_*$.  Transposition gives the standard Hecke anti-involution, and the
projection formula identifies the induced pairing with the Levi pairing.

\smallskip
\noindent\emph{Step 5: comparison of the two primitive maps.}
Pull both completed lattices back to the chosen arc.  The geometric
map is \eqref{eq:normal-fibre-map}, Hecke-linear by naturality of orbit
restriction (Lemma~\ref{lem:residual-grading}).  Its generic fibre is
invertible: the normal weights lie outside the Levi root system, so
relative regularity makes their values on $\eta$ nonzero.  Localization
on the slice leaves only the vertex.
Lemma~\ref{lem:normal-purity-base-change} applies also to this possibly
non-dominant cocharacter: purity was established using a positive
cocharacter, and freeness over the full stabilizer coefficient ring
is independent of the chosen arc.

The normal map is already primitive.  The IC summand carrying the
head restricts to a point-supported object on the slice; its map is
the identity, also after enhancement projection.  Normalize the
algebraic map primitively and use $\Phi_!$ and $\Phi_*$
to regard both as maps between the same lattices.

\smallskip
\noindent\emph{Step 5a: generic proportionality.}
Let
$\overline\K$ be an algebraic closure of $\K$, let $W_Q$ be the Weyl group of
$Q$, and let $W^Q$ be the set of minimal representatives for $W/W_Q$.
For a field extension $E/\mathbb C$, write
$\Hh_{G,E}=\Hh_G\otimes_{\mathbb C}E$, and use the analogous notation for
$Q$ and for modules.  Let $X_\K$ be the generic fibre of the induced family,
and let $\Omega_\delta$ be the finite set of generalized
$\operatorname{Sym}(\mathfrak t^*)$-weights of $\delta$.  By the PBW theorem,
the generalized weights of the induced module lie in
\[
 \bigcup_{w\in W^Q}w\bigl(\Omega_\delta+\nu+t\eta\bigr).
\]
These subsets are pairwise disjoint over $\overline\K$.  Indeed, equality
between a weight in the $w_1$-subset and one in the $w_2$-subset would, after
comparing the coefficient of $t$, give $w_1\eta=w_2\eta$.  Relative
regularity says $\operatorname{Stab}_W(\eta)=W_Q$, so
$w_2^{-1}w_1\in W_Q$.  Hence $w_1W_Q=w_2W_Q$, and uniqueness of the minimal
representative of a coset gives $w_1=w_2$.

Every endomorphism of the induced $\Hh_{G,\overline\K}$-module consequently
preserves the inducing generalized-weight block
$1\otimes\delta_{\overline\K}$.  The identity-coset block is stable under
$\Hh_{Q,\overline\K}$: its polynomial weights are
$\Omega_\delta+\nu+t\eta$, and the finite Weyl group $W_Q$ permutes these
weights without leaving that block.  The restriction of an endomorphism to
this block is therefore $\Hh_{Q,\overline\K}$-linear and hence scalar by
Schur's lemma.  Here no descent issue is hidden: the finite-dimensional
$\Hh_Q$-module $\delta$ is irreducible over the algebraically closed field
$\mathbb C$, so Burnside's theorem identifies its image algebra with
$\operatorname{End}_{\mathbb C}(\delta)$.  Thus every scalar extension of
$\delta$ is absolutely irreducible and has scalar endomorphism ring.  Since this
block generates the induced module, the endomorphism itself is scalar.
Descent gives
\[
 \operatorname{End}_{\Hh_{G,\K}}(X_\K)=\K.
\]
For the Weyl element defining the standard-to-contragredient operator,
\cite[Proposition~4.3]{SolleveldInduced} shows that the normalized
intertwiner is rational in the twist parameter and is regular and invertible
on a dense Zariski-open subset.  In the product formula in its proof, the
possible poles of the operator and its inverse lie on finitely many affine
root hyperplanes associated with roots outside $R_Q$.  Their linear parts
pair nontrivially with $\eta$ by relative regularity, so every defining
affine-linear function restricts nontrivially to the line $\nu+t\eta$.  Thus
$I_{\rm alg}\otimes_\A\K$ is an isomorphism.  Composition
with its inverse identifies the generic Hom-space with the scalar
endomorphism ring above.

It follows that the transported geometric map
$I_{\rm geo}$ and normalized algebraic map $I_{\rm alg}$ are proportional:
\[
 I_{\rm geo}=aI_{\rm alg}\qquad(a\in\K^\times).
\]
\smallskip
\noindent\emph{Step 5b: comparison of contents.}
Use $\Phi_!$ and $\Phi_*$ to regard both maps as nonzero maps between the
same finite free $\A$-lattices $M$ and $N$.  For
\[
 0\ne T\in\operatorname{Hom}_{\K}
   (M\otimes_\A\K,N\otimes_\A\K),
\]
define its content to be the fractional $\A$-ideal
\[
 \operatorname{cont}(T)=
 \big\langle\lambda(Tm):m\in M,\ \lambda\in N^\vee\big\rangle_\A
 \subset\K.
\]
Here each $\lambda\in N^\vee$ is extended $\K$-linearly to
$N\otimes_\A\K$.  Equivalently, $\operatorname{cont}(T)$ is generated by
the entries of the matrix of $T$ in any $\A$-bases.  The intrinsic definition
shows that it is basis independent, and
\[
 \operatorname{cont}(bT)=b\operatorname{cont}(T)
 \qquad(b\in\K^\times).
\]
If $T:M\to N$ is integral, then $\operatorname{cont}(T)\subset\A$, and
$T$ is primitive precisely when $\operatorname{cont}(T)=\A$: equivalently,
at least one matrix entry is a unit, or $T(M)\not\subset tN$.

Both $I_{\rm geo}$ and $I_{\rm alg}$ are primitive integral maps, so their
contents are $\A$.  By Step~5a,
$I_{\rm geo}=aI_{\rm alg}$ with $a\in\K^\times$.  Hence
\[
 \A=\operatorname{cont}(I_{\rm geo})
 =a\operatorname{cont}(I_{\rm alg})=a\A.
\]
Since $\A$ is a discrete valuation ring, this equality is equivalent to
$v_t(a)=0$, and therefore $a\in\A^\times$.  Thus the two integral maps
differ by a unit.  Such a unit changes neither their Smith exponents nor the
induced filtration.
\end{proof}

\subsection{Proof of the main theorem}

\begin{theorem}[Jantzen theorem]
\label{thm:equal-parameter-jantzen}
Let $\Hh=\Hh(R,1)$ be an equal-parameter graded affine Hecke algebra, and use
the standard principal Springer parametrization fixed in the Introduction.
Let $X(M,\delta,\nu)$ be a Langlands standard module with real central
character, and deform it along a rational cocharacter $\eta$ in the
interior of its dominant Langlands chamber, positively rescaled to be
integral.  Equivalently, $\eta$ is dominant and regular relative to $M$ in
the sense fixed in the Introduction.  Then every
successive quotient of its Jantzen filtration is semisimple.  For a simple
module indexed by an enhanced orbit $(\mathcal O_\delta,\mathcal L_\delta)$,
write $\gamma$ for the enhanced orbit indexing the standard module.  Then
its layer multiplicities are obtained from the Lefschetz strings in the
corresponding local IC costalk.  With the head in layer zero,
\[
 \sum_{n\geq 0}[\operatorname{gr}_J^nX_\gamma:L_\delta]v^n
 =v^{d_\delta-d_\gamma}\widehat P_{\gamma\delta}(v^{-1}),
\]
where $\widehat P_{\gamma\delta}$ is defined in
\eqref{eq:full-ic-polynomial}.  Thus the statement retains the complete
cohomological grading and the dual-label convention.
\end{theorem}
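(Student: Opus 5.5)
The plan is to assemble the main theorem from the comparison machinery already in place, in the order transport $\to$ integral comparison $\to$ local Smith theorem $\to$ reindexing.

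\emph{Transport and verification of hypotheses.} First, use Lemma~\ref{lem:analytic-geometric-transport} to replace the analytic Langlands family $X(M,\delta,\nu+t\eta)$ by the Iwahori--Matsumoto transform of the geometric family with arc $-\nu-t\eta$. This transport preserves elementary divisors, layers, semisimplicity and multiplicities, so it suffices to work geometrically with $\eta_{\rm g}=-\eta$. Dominance of $\eta$ in the Langlands chamber is exactly the statement that $\eta_{\rm g}$ lies in the contracting chamber of Proposition~\ref{prop:general-graded-slodowy} for the attractive parabolic with Levi $Q=M=Z_G(\sigma_0)$. Hence the graded Slodowy slice at $y=x_\gamma$ is positively contracted by $\eta_{\rm g}$.

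Next, verify the standing hypotheses for the principal datum $(T,\{0\},\mathbf 1)$:
\begin{enumerate}[label=(\roman*)]
\item The Levi tensor splitting $\Hh_M\simeq\Hh_{M,\mathrm{der}}\otimes S_M$ holds, since the cuspidal datum is principal and untwisted.
\item The IC summands of $K_{\sigma,r}$ have pure polarizable realizations. The localized Springer map is proper, the coefficient is constant, and the decomposition theorem (Saito, or BBD after spreading out) applies. Restricted IC's carry the local systems $\mathcal L_\delta$, which have finite monodromy and are of geometric origin.
\item The datum is self-dual, so $\Phi_*$ lands in the same block and its target is $X^\vee_{\gamma^\dagger,\A}$.
\end{enumerate}

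\emph{Integral comparison and the local theorem.} Apply Proposition~\ref{prop:general-coefficient-arc} to identify $X_{\gamma,\A}$ and $X^\vee_{\gamma^\dagger,\A}$ with the $\rho_\gamma$-isotypic normal costalk and stalk lattices. Under this identification the primitive $I_t$ agrees with $c^{\eta_{\rm g}}$ up to an $\A$-unit, so the Jantzen filtration \eqref{eq:jantzen} equals the Smith filtration of the normal map. Proposition~\ref{prop:two-component-groups} gives component invariance of the coefficient homomorphism. Proposition~\ref{prop:component-descent} then lets Theorem~\ref{thm:general-local} apply summandwise, yielding semisimple layers and
\[
 [\operatorname{gr}_J^nX_\gamma:L_\delta]
 =\dim\operatorname{Hom}_{A_\gamma}\bigl(\rho_\gamma,\mathcal H^{n+d_\gamma}i_\gamma^!\IC_\delta\bigr).
\]

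\emph{Reindexing to $\widehat P$.} Use Verdier duality to write $i_\gamma^!\IC_\delta=\mathbb D\,i_\gamma^*\IC_{\delta^\dagger}$. Then $\mathcal H^{k}i_\gamma^!\IC_\delta$ is dual to $\mathcal H^{-k}i_\gamma^*\IC_{\delta^\dagger}$, with Tate twist irrelevant for dimensions. Consequently
\[
 \dim\operatorname{Hom}_{A_\gamma}\bigl(\rho_\gamma,\mathcal H^{k}i_\gamma^!\IC_\delta\bigr)
 =\dim\operatorname{Hom}_{A_\gamma}\bigl(\rho_\gamma^\vee,\mathcal H^{-k}i_\gamma^*\IC_{\delta^\dagger}\bigr).
\]
With $k=n+d_\gamma$, we need $-n-d_\gamma=-d_\delta+j$, that is, $j=d_\delta-d_\gamma-n$. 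Summing $v^n$ over $n$ therefore gives $v^{d_\delta-d_\gamma}\widehat P_{\gamma\delta}(v^{-1})$. The head check $\delta=\gamma$, $n=0$, $j=0$ is consistent with Corollary~\ref{cor:absolute-layer-normalization}.

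\emph{Main obstacle.} The delicate step is the duality bookkeeping in the reindexing. Two points need care: whether the costalk is an honest fibre dual of the stalk at a point that is not a full stratum, and how equivariant costalk and stalk duality interacts with the $\rho_\gamma$ versus $\rho_\gamma^\vee$ projectors. My approach is to work on the transverse slice. There $k^!I_{\gamma\delta}$ and $k^*\mathbb D I_{\gamma\delta}$ are dual by the definition of $\mathbb D$ on the slice, and the clean pullback (Lemma~\ref{lem:general-clean-slodowy}) commutes with $\mathbb D$ up to the shift $[2d_\gamma](d_\gamma)$. Taking $A_\gamma$-isotypic parts, $\operatorname{Hom}(\rho,V^*)\cong\operatorname{Hom}(\rho^\vee,V)^*$ handles the representation swap.
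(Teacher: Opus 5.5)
Your proposal is correct and follows the paper's proof essentially step for step: Iwahori--Matsumoto transport to the geometric arc $-\eta$, the contracting graded Slodowy slice, purity of the localized principal complex, the integral comparison of Proposition~\ref{prop:general-coefficient-arc} combined with component descent and the local Smith theorem, and finally the Verdier-duality reindexing $n=d_\delta-d_\gamma-j$. Three minor points are worth making explicit, as the paper does: purity should be deduced from smoothness of the fixed incidence locus $\dot{\mathfrak g}^{\sigma,r}$, which is needed because the localized diagram is not Cartesian; the perverse support bound $\mathcal H^k i_\gamma^*\IC_{\delta^\dagger}=0$ for $k>-d_\gamma$ ensures that no $j$-term falls into a negative layer; and the duality worry you flag is harmless, since $\mathbb D\,i_\gamma^!\simeq i_\gamma^*\,\mathbb D$ holds for any point inclusion, so no slice argument is needed.
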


\begin{proof}
\smallskip
\noindent\emph{Step 1: pass to the geometric convention and fix the chamber.}
Let $M$ be the Langlands Levi.  By the
analytic/geometric transport of
Lemma~\ref{lem:analytic-geometric-transport}, it is enough to prove
the assertion for the geometric data
\[
 (\delta_{\rm g},\nu_{\rm g},\eta_{\rm g})
 =\bigl((\operatorname{IM}_M)^*\delta,-\nu,-\eta\bigr)
\]
and then apply $\operatorname{IM}^*$.  Let
$(y,\sigma,r,\rho)$ be the corresponding enhanced geometric parameter.
Choose the homogeneous triple $(y,h,f)$ from the graded Slodowy construction
and put $\sigma_0=\sigma-rh$.  The direction $\eta_{\rm g}$ is positive on the
opposite attractive nilradical used in the fixed-slice construction, while
the triple-centralizing element $\operatorname{Re}\sigma_0$ has the sign
required in Lemma~\ref{lem:two-induction-square}.  For the remainder of the proof,
rename the geometric triple $(\delta_{\rm g},\nu_{\rm g},\eta_{\rm g})$ as
$(\delta,\nu,\eta)$.
Before fixing representatives, conjugate the entire enhanced parameter so
that $\sigma_0$ lies in the chosen Cartan and
$\operatorname{Re}\sigma_0$ is negative on the nilradical of the Langlands
parabolic.  Solleveld's Proposition~B.4(b), together with the remark
immediately following it, shows that every Langlands parameter admits this
normalization.  Accordingly, put
$Q=M=Z_G(\operatorname{Re}\sigma_0)$
\cite[Proposition~B.4(b) and the following remark]{Solleveld}.

\smallskip
\noindent\emph{Step 2: identify the algebraic and geometric lattices.}
Apply the constructions of this section to the principal cuspidal datum
$(T,\{0\},\mathbf1)$, which is self-dual.  All subsequent constructions are
taken in this fixed principal realization.
This geometric standard is induced from the appropriate opposite parabolic
and is realized by the $\rho$-isotypic costalk; its dual-labelled
contragredient target is the $\rho$-isotypic stalk, as in
Proposition~\ref{prop:component-descent}.
Proposition~\ref{prop:general-coefficient-arc}
identifies these two algebraic lattices with the completed normal
costalk and stalk families, using \eqref{eq:normal-point-objects}.
They are finite free over the one-parameter coefficient ring.  The
comparison of their maps will be made after the purity check in Step~3.  Proposition
\ref{prop:two-component-groups} identifies the enhancement in the Levi datum
with the final-orbit enhancement, and Proposition
\ref{prop:component-descent} applies the common isotypic projector.

\smallskip
\noindent\emph{Step 3: verify purity of the localized complex.}
Purity must be checked for the localized complex itself because the fixed
localization diagram is not Cartesian.  For the principal datum the global
incidence variety $\dot{\mathfrak g}$ is a vector bundle over $G/B$, hence is
smooth, and its coefficient system is constant.  Its torus fixed locus
$\dot{\mathfrak g}^{\sigma,r}$, where the superscript denotes the fixed locus
of $\exp(\mathbb C(\sigma,r))$, is smooth.  Solleveld's definition and
fixed-point description, equations~(2.11)--(2.14) of~\cite{Solleveld}, use
the projection
\[
 \operatorname{pr}_1:\dot{\mathfrak g}^{\sigma,r}
 \longrightarrow\mathfrak g_{\sigma,2r}
\]
and give
\[
 K_{\sigma,r}=(\operatorname{pr}_1)_!
 \IC\bigl(\dot{\mathfrak g}^{\sigma,r},\mathbf1\bigr),
\]
with componentwise perverse shifts and the fixed Tate twists in the
normalization of the geometric realization.  Each fixed component is smooth,
so its IC
complex is its shifted constant complex and is pure.  After spreading out,
proper direct-image purity and the decomposition theorem
\cite[Stabilit\'es~5.1.14 and Th\'eor\`eme~5.4.5]{BBD} make every perverse summand of
$K_{\sigma,r}$ pure
with its prescribed shift.  Equivalently, the decomposition theorem writes
$K_{\sigma,r}$ as a direct sum of shifts of pure perverse IC complexes.
The cohomological shifts and Tate twists entering the Grojnowski regrading
change the numerical degrees and weights by the prescribed amounts, but they
preserve purity.  Consequently, after the normalization fixed in
Section~\ref{sec:local-smith}, every perverse IC summand satisfies the purity
hypothesis of Theorem~\ref{thm:general-local}.  That theorem may therefore be
applied separately to every summand of $K_{\sigma,r}$.

\smallskip
\noindent\emph{Step 4: compare the normal Smith filtration with the
Jantzen filtration.}
The direction $\eta$ is positive on the attractive parabolic opposite
to the Langlands parabolic.  Proposition
\ref{prop:general-graded-slodowy} supplies the positively contracted
slice $V_{\sigma,r}(y)$.

For each perverse IC summand, take the normal orbit-restriction map
\eqref{eq:normal-fibre-map}.  Transverse base change identifies it with
the vertex map on $V_{\sigma,r}(y)$.
Lemma~\ref{lem:normal-purity-base-change} justifies ordinary base change
from the connected stabilizer to the contracting direction.  For a
positive-dimensional slice, its completed equivariant coordinate is
$z_\eta=c_\eta t$, with $c_\eta>0$; on a point slice the normal map is
the identity.  The local Smith theorem
\ref{thm:general-local} therefore gives a convolution-stable filtration
$F^\bullet$ with semisimple layers.
The coefficient homomorphism is component-invariant by
Proposition~\ref{prop:two-component-groups}; apply the exact
$\rho$-isotypic projector after the coefficient base change.

It remains to identify the map, not merely its source and target.
The normal map is generically invertible and primitive: on the
head-labelled, point-supported slice summand it is the identity.
It is Hecke-linear by functoriality of orbit restriction.
Proposition~\ref{prop:general-coefficient-arc}(2), now applicable by
Step~3, compares it with the primitive algebraic intertwiner.
The scalar generic endomorphism ring makes the two maps proportional; comparing
their contents over $\A$ makes the scalar a unit.
Consequently the vertical identifications in
\eqref{eq:master-comparison} satisfy
\[
 \Phi_!\!\left(I_t^{-1}
       \bigl(t^nX_{\gamma^\dagger,\A}^{\vee}\bigr)\right)
 =(c_\gamma^N)^{-1}
       \bigl(t^n\mathsf L_{*,\gamma}^{\eta}\bigr)
 \qquad(n\geq0).
\]
Reducing modulo $t$ identifies $J^\bullet$ with $F^\bullet$.
No ambient point self-intersection factor enters this argument:
the normal arrow is defined along the orbit before taking its fibre.

\smallskip
\noindent\emph{Step 5: compute the absolute layer indices and return to the
analytic convention.}
Corollary~\ref{cor:absolute-layer-normalization} identifies layer $n$ with
\[
 \operatorname{Hom}_{A_\gamma}\!\left(
  \rho_\gamma,
  \mathcal H^{n+d_\gamma}i_\gamma^!\IC_\delta
 \right).
\]
Verdier duality turns this into
\[
 \operatorname{Hom}_{A_\gamma}\!\left(
  \rho_\gamma^\vee,
  \mathcal H^{-n-d_\gamma}i_\gamma^*\IC_{\delta^\dagger}
 \right).
\]
Writing $-n-d_\gamma=-d_\delta+j$ gives
$n=d_\delta-d_\gamma-j$.  If $m_j$ denotes the dimension of the displayed
dual stalk coefficient in degree $-d_\delta+j$, then the perverse support
bound $\mathcal H^ki_\gamma^*\IC_{\delta^\dagger}=0$ for
$k>-d_\gamma$ implies
$m_j=0$ when $j>d_\delta-d_\gamma$.  Thus every nonzero term below has
$n\geq0$, and
\[
 \sum_{n\geq0}[\operatorname{gr}_J^nX_\gamma:L_\delta]v^n
 =\sum_jm_jv^{d_\delta-d_\gamma-j}
 =v^{d_\delta-d_\gamma}\widehat P_{\gamma\delta}(v^{-1}).
\]
This proves~\eqref{eq:layer-polynomial}.  A term indexed by $j$
contributes $v^{d_\delta-d_\gamma-j}$, and normalization on the generic
Langlands quotient fixes the common shift.  Finally, under the signed
base-ring identification above,
$\operatorname{IM}^*$ is exact and $\mathbb C[[t]]$-linear, so it transports
these layers, their semisimplicity, and their enhanced-orbit multiplicities
back to the analytic standard module $X(M,\delta,\nu)$.
\end{proof}

\section{Dominance and generalizations}
\label{sec:dominance}

\subsection{Unequal parameters arising from generalized Springer theory}

There are two different possible meanings of an unequal-parameter
generalization.  An arbitrary $W$-invariant function
$\alpha\mapsto c_\alpha$ in the presentation of the Introduction need not
have a known sheaf-theoretic realization.  The discussion here concerns the
\emph{geometric} graded affine Hecke algebras
\[
 \mathbb H(G,M_{\rm cusp},q\mathcal E,r)
\]
attached to a generalized-Springer cuspidal datum.  Their root parameters
$c_\alpha r$ are determined by the datum and may be unequal; the algebra may
also carry the finite extension and cocycle described in
\cite[Section~2]{AMS}.  Standard modules, simple modules, and their
Kazhdan--Lusztig multiplicities are realized by the corresponding equivariant
constructible complexes; see \cite[Sections~2--5]{Solleveld}.

Most of the proof was formulated in this geometric generality.  None of the
following ingredients uses equality of the parameters:
\begin{enumerate}[label=\textup{(G\arabic*)},leftmargin=2.8em]
\item the contracting graded Slodowy slice and the completed fixed-slice
comparison, Propositions~\ref{prop:general-graded-slodowy} and
\ref{prop:general-fixed-slice};
\item the cone calculation with finite-stabilizer coefficients, hard
Lefschetz, and the Smith--primitive comparison of
Section~\ref{sec:local-smith};
\item the residual grading on the specialized convolution algebra and the
identity
$\operatorname{rad}(\mathcal E_\chi^{\rm res})
=(\mathcal E_\chi^{\rm res})_{>0}$;
\item exact projection to every component-group enhancement, Propositions
\ref{prop:two-component-groups} and~\ref{prop:component-descent}; and
\item the completed parabolic-induction comparison, the generic
proportionality argument, and the positive coordinate
$z_\eta\mapsto c_\eta t$ in Proposition
\ref{prop:general-coefficient-arc}.
\end{enumerate}
Thus no new local Smith--Lefschetz calculation or semisimplicity argument is
required for unequal parameters.  The remaining issue is to make
the global algebraic standard family and its duality agree, in every
geometric block, with the two completed lattices to which the local theorem
applies.

The following proposition records the precise conditional reduction.

Let $\mathbb H(G,M_{\rm cusp},q\mathcal E,r)$ be a geometric graded affine
Hecke algebra, let $X_\gamma$ be a Langlands standard module with real central
character, and deform its central parameter along a rational cocharacter
$\eta$ in the interior of the dominant Langlands chamber (equivalently, a
dominant direction regular relative to the Langlands Levi).  Let
$K_{\sigma,r}=(\pi_{\sigma,r})_!F_{\sigma,r}$ be the corresponding localized
generalized-Springer complex.  Use the normal lattices \eqref{eq:normal-arc-lattices}, with the
connected point-stabilizer coefficient ring, and then take the
$\rho_\gamma$-isotypic part.  In particular the notation
$\mathsf L_{!,*}^{\eta}$ below means
\[
 \operatorname{Hom}_{A_\gamma}\!\left(\rho_\gamma,\,
 \mathbb C[[t]]\otimes_{\widehat R_\gamma,\psi_\eta}
 H_{C_{\chi,\gamma}^\circ}^{\bullet}
       (\mathsf N_\gamma^{!,*}K_{\sigma,r})^\wedge_\chi\right),
\]
where $\widehat R_\gamma
=H_{C_{\chi,\gamma}^\circ}^{\bullet}(\mathrm{pt})^\wedge_\chi$.
The projector is applied after the invariant coefficient base change.

\paragraph{Block hypotheses.}
The reduction uses the following four inputs.
\begin{enumerate}[leftmargin=3.2em]
\item[\textup{(U1)}] For the Langlands Levi $Q$, the universal central-twist lattice admits
the tensor splitting of Lemma~\ref{lem:levi-coefficient-splitting}, or a
crossed-product analogue with the same consequence.  In particular,
Proposition~\ref{prop:general-coefficient-arc} identifies the completed
algebraic standard lattice with the appropriate isotypic equivariant
costalk.
\item[\textup{(U2)}] Verdier duality for $q\mathcal E$ is identified with the algebraic
transpose anti-involution in the block under consideration.  Equivalently,
the completed stalk is the integral contragredient target of $X_\gamma$, and
the primitive algebraic intertwiner corresponds, up to an element of
$\mathbb C[[t]]^\times$, to the normal orbit-restriction map.
\item[\textup{(U3a)}] The cuspidal coefficient $q\mathcal E$ admits a pure
polarizable mixed realization, and the localized generalized-Springer
complex is formed in that category with the componentwise perverse shifts and
Tate twists prescribed by the geometric Hecke normalization.  Every
perverse IC summand used below is therefore pure of its prescribed weight
and polarizable, and the component-group isotypic projectors preserve these
properties.
\item[\textup{(U3b)}] The two modules $\mathsf L_!^\eta$ and
$\mathsf L_*^\eta$ defined above are finite-rank free
$\mathbb C[[t]]$-modules.
\end{enumerate}

Condition (U2) may identify a self-dual block with itself, or it may pair a
block with its linear-dual block.  Condition (U3b) says precisely that the
two arcwise lattices have no $t$-torsion, so their canonical map has an
ordinary Smith normal form.  This follows from (U1) and (U2) through
Proposition~\ref{prop:general-coefficient-arc}; it is listed separately to
make the freeness input to the local theorem explicit.

\begin{proposition}[Reduction of the geometric unequal-parameter case]
\label{prop:unequal-parameter-reduction}
Assume \textup{(U1)}--\textup{(U3b)}.  Then every Jantzen layer of
$X_\gamma$ is semisimple.  If $L_\delta$ is the
simple module indexed by the enhanced orbit $\delta$, then, with the
Langlands quotient in layer zero,
\[
 [\operatorname{gr}_J^nX_\gamma:L_\delta]
 =\dim\operatorname{Hom}_{A_\gamma}\!\left(
   \rho_\gamma,
   \mathcal H^{n+d_\gamma}i_\gamma^!\IC_\delta
  \right).
\]
Equivalently, the layer polynomial is the fully graded local IC polynomial
appearing in Theorem~\ref{thm:equal-parameter-jantzen}.
\end{proposition}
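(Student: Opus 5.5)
The plan is to rerun the proof of Theorem~\ref{thm:equal-parameter-jantzen} and replace each principal-datum input by the corresponding block hypothesis. The listed ingredients (G1)--(G5) were proved without equal parameters, so only the global identifications need new justification.

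First, use Lemma~\ref{lem:analytic-geometric-transport} to move to the geometric convention. This replaces $(\delta,\nu,\eta)$ by $((\operatorname{IM}_M)^*\delta,-\nu,-\eta)$. Its proof uses only the Bernstein-generator formulas for $\operatorname{IM}$, so it applies to the geometric algebra $\mathbb H(G,M_{\rm cusp},q\mathcal E,r)$. If the finite extension of \cite[Section~2]{AMS} is present, one checks that $\operatorname{IM}$ fixes the twisting elements up to sign. Next, normalize the parameter via \cite[Proposition~B.4(b)]{Solleveld} so that $Q=Z_G(\operatorname{Re}\sigma_0)$ is the Langlands Levi. Proposition~\ref{prop:general-graded-slodowy} then gives the positively contracted slice $V_{\sigma,r}(y)$ for $\eta_{\rm g}$. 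Proposition~\ref{prop:general-fixed-slice} and Lemma~\ref{lem:general-clean-slodowy} give the clean fixed-slice restriction; neither depends on the parameters.

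Second, identify the lattices and the map. By (U1), Proposition~\ref{prop:general-coefficient-arc}(1) applies verbatim: the splitting of Lemma~\ref{lem:levi-coefficient-splitting} was its only datum-specific input. It identifies $X_{\gamma,\A}$ with $\mathsf L_!^\eta$. By (U2), the dual-labelled contragredient lattice is identified with $\mathsf L_*^\eta$, and the primitive intertwiner $I_t$ corresponds to $c^\eta_\gamma$ up to $\A^\times$. Steps~5a--5b (generic Schur argument and content comparison) still work here. Relative regularity of $\eta$ and irreducibility of the tempered Levi module are the only facts they use. Proposition~\ref{prop:two-component-groups} makes the coefficient homomorphism $A_\gamma$-invariant. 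Proposition~\ref{prop:component-descent} then lets the $\rho_\gamma$-projector commute with the Smith filtration, as in the displayed identity $\Phi_!(I_t^{-1}(t^n\cdot))=(c_\gamma^N)^{-1}(t^n\cdot)$. Reducing modulo $t$ gives $J^\bullet=F^\bullet$.

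Third, apply the local theorem. Hypothesis (U3a) replaces Step~3 of the principal proof: it supplies exactly the purity and polarizability required by Lemma~\ref{lem:normal-purity-base-change} and Theorem~\ref{thm:general-local} for every perverse summand, and it is preserved by the isotypic projectors. Hypothesis (U3b) supplies finite freeness over $\A$, so Hypothesis~\ref{hyp:local-cone-compatibility} holds via restriction to the effective contracting torus with $z_\eta\mapsto c_\eta t$. Theorem~\ref{thm:general-local} and Proposition~\ref{prop:ext-equivariance} then make the layers modules over $(\mathcal E_\chi^{\rm res})_0$, since $\operatorname{rad}=(\mathcal E_\chi^{\rm res})_{>0}$ by Lemma~\ref{lem:residual-grading}. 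The layers are therefore semisimple. Corollary~\ref{cor:absolute-layer-normalization} gives the stated formula for $[\operatorname{gr}^n_J X_\gamma:L_\delta]$, with the head in layer zero. Verdier duality, exactly as in Step~5 of the main proof, converts this to the polynomial $v^{d_\delta-d_\gamma}\widehat P_{\gamma\delta}(v^{-1})$.

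I expect the main obstacle to be the second step when the algebra carries a nontrivial finite extension or cocycle. There the "crossed-product analogue" in (U1) must still yield a scalar generic endomorphism ring. I would first check that the inducing Levi module remains absolutely irreducible over the twisted Levi algebra. I would then check that the disjoint-weight-block argument of Step~5a goes through once $W_Q$ is replaced by its extension by the component group $\Gamma_Q$, with regularity stated as $\operatorname{Stab}(\eta)=W_Q\rtimes\Gamma_Q$. If that stabilizer statement fails, the generic endomorphism ring could be larger than $\K$, and proportionality of the two maps would have to come from (U2) directly rather than from Schur's lemma.
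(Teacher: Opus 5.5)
Your proposal is correct and follows the paper's route: (U1) and (U2) identify the primitive Jantzen intertwiner with the normal costalk-to-stalk map up to a unit of $\A$, the positive coordinate $z_\eta\mapsto c_\eta t$ of Proposition~\ref{prop:general-coefficient-arc} preserves Smith exponents, and (U3a)--(U3b) supply the purity and freeness needed for Theorem~\ref{thm:general-local}, which gives semisimplicity and the layer formula. The paper's proof is only this short chain, so your re-derivation of Steps~5a--5b and your crossed-product concern are unnecessary here, because (U2) already asserts the unit comparison and (U1) already contains the identification of the algebraic standard lattice.
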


\begin{proof}
By (U1) and (U2), the algebraic Jantzen filtration is the Smith filtration of
the completed costalk-to-stalk map: multiplying that map by a unit of
$\mathbb C[[t]]$ does not change either filtration.  The positive-coordinate
part of Proposition~\ref{prop:general-coefficient-arc} identifies the
equivariant variable with a positive nonzero multiple of $t$, so the Smith
exponents are unchanged.  Assumptions (U3a) and (U3b) supply the remaining
hypotheses of Theorem~\ref{thm:general-local}: (U3a) supplies purity and polarizability
for hard Lefschetz, while (U3b) supplies the free lattices required by Smith
theory.  That theorem gives semisimplicity and the displayed multiplicity
formula, including the component-group projector and the absolute layer
normalization.
\end{proof}

Three points remain before this reduction gives an unconditional theorem for
every geometric unequal-parameter algebra.

First, Lemma~\ref{lem:levi-coefficient-splitting} is presently stated with
an untwisted tensor decomposition
\[
 \Hh_Q\simeq
 \Hh_{Q,{\rm der}}\otimes
 \operatorname{Sym}(\mathfrak z(\mathfrak q)^*).
\]
For connected, untwisted generalized-Springer algebras, the Bernstein
relations and the K\"unneth argument in its proof give precisely this
splitting.  In a finite crossed product or a cocycle-twisted block, however,
the component group can act on the central coefficient ring.  One must
replace the displayed tensor product by the corresponding semilinear
crossed-product statement and prove that completion and the enhancement
projector still recover the algebraic universal twist family.  This is a
coefficient-family problem; it does not affect the local theorem.

Second, Proposition~\ref{prop:general-coefficient-arc} naturally compares
the costalk block for $q\mathcal E$ with the stalk block for the dual
coefficient datum $q\mathcal E^\vee$.  When the datum and its cocycle are
self-dual, this is the usual standard-to-contragredient arrow in one algebra.
Without a chosen identification with the dual block, the construction still
produces a paired Smith filtration, but it has not yet been identified in
the paper with the customary internal Jantzen filtration of a single
unequal-parameter algebra.  This dual-block identification is the main
representation-theoretic compatibility that remains.

Third, the purity check in Step~3 of the proof of
Theorem~\ref{thm:equal-parameter-jantzen} was written only for the principal coefficient
system.  For a generalized-Springer datum of geometric origin, the cuspidal
local system has finite monodromy and admits a pure polarizable realization;
the expected extension of that argument is to express
$K_{\sigma,r}$ as the proper direct image of this coefficient object on the
smooth components of the fixed incidence variety, with the prescribed
componentwise shifts and Tate twists.  This should establish (U3a) by proper
direct-image purity and the decomposition theorem.  Nevertheless, that
mixed normalization and its compatibility with
the twisted finite action must be written down before purity can be cited
uniformly.  Condition (U3b) is not an additional obstruction: once (U1) and
(U2) identify the two geometric modules with the completed standard and
contragredient lattices, Proposition~\ref{prop:general-coefficient-arc}
makes them finite free over $\mathbb C[[t]]$.

No other local input is needed for this extension.  The enhancement groups
may be nontrivial, the parameters $c_\alpha$ may be unequal, and the
positive-degree residual ideal still acts trivially on every Jantzen layer.
An arbitrary numerical unequal-parameter function without a
generalized-Springer realization is not covered: in that setting the
geometric standard module and the costalk-to-stalk map are unavailable.

\subsection{Dominant and non-dominant directions}

Theorem~\ref{thm:equal-parameter-jantzen} is stated for the standard principal Springer
realization fixed in the Introduction.  Nontrivial component-group
representations are retained by exact isotypic projection on the fixed
incidence diagram.  The polynomial
$\widehat P_{\gamma\delta}(u)$, defined in
\eqref{eq:full-ic-polynomial}, retains every local cohomological degree and
the dual enhanced labels.

Dominance is used at two specific points.  First, it makes the deformation
cocharacter strictly positive on the normal directions to the fixed stratum;
this is Proposition~\ref{prop:general-graded-slodowy}.  Second,
the same positivity identifies the equivariant parameter with the first
Chern class of an ample line bundle on the projectivized punctured
slice; see \eqref{eq:positive-coefficient-arc}.  These are precisely the
positivity inputs needed for hard Lefschetz in
Theorem~\ref{thm:general-local}.  Outside this chamber the
filtration can change, see Appendix \ref{app:counterexample}.  

\appendix
\renewcommand{\theequation}{A.\arabic{equation}}
\setcounter{equation}{0}
\section{A regular non-dominant deformation of a standard module}
\label{app:counterexample}

The example concerns a single standard module for
$\Hh(\mathrm{GL}_{16})$.  Two regular real deformations of its inducing
data give different Jantzen filtrations; one direction is dominant and the
other is not.  The geometric input is Williamson's rank-two intersection
matrix for the Schubert interval $[1234,4231]$.

\subsection{Standard modules and deformation families}

The quiver calculation is naturally written in the geometric
standard-module convention of Section~\ref{sec:fixed-slice-comparison}.  It
is important here to distinguish the geometric and analytic inducing
modules.  The geometric costalk standard is induced from anti-discrete-series
modules, on which the symmetric group acts trivially.  The
Iwahori--Matsumoto involution transports it to the analytic Langlands
standard induced from discrete-series segment modules, on which the
symmetric group acts by sign.  It also sends a geometric coefficient arc
$\mathbf c(t)$ to the sign-reversed analytic arc $-\mathbf c(t)$, without
changing the parameter $t$.  Replacing $t$ by $-t$ changes neither the Smith
exponents nor the Jantzen layers.

Write $\Hh_d=\Hh(\mathrm{GL}_d)$.  For a segment
$\Delta=[a,b]$, let $Z_{\rm an}(\Delta)$ be the corresponding analytic
essentially discrete-series $\Hh_{b-a+1}$-module.  In the present
normalization it is one-dimensional: the symmetric group acts by sign and
the polynomial weights are $(a,a+1,\ldots,b)$.  Put
\[
 \Delta_i=[i-1,i+2],\qquad
 E_i^{\rm an}=Z_{\rm an}(\Delta_i)\qquad(1\leq i\leq4),
\]
and define the geometric anti-discrete-series factor by
\[
 E_i^{\rm g}=(\operatorname{IM}_4)^*E_i^{\rm an}.
\]
The symmetric group acts trivially on $E_i^{\rm g}$, and its polynomial
weights, in the same coordinate order, are
\[
 -(i-1),\ -i,\ -(i+1),\ -(i+2).
\]
Label the polynomial weight $-k$ by the quiver vertex $k$.
Since $[\sigma,y]=y$, an arrow goes from $k$ to $k-1$.
Thus the geometric quiver is $0\leftarrow1\leftarrow\cdots\leftarrow6$,
and $E_i^{\rm g}$ is the interval representation $\Delta_i$.
For a formal scalar $c$, set
\[
 E_i^{\rm an}(c)=Z_{\rm an}([i-1+c,i+2+c]),
 \qquad
 E_i^{\rm g}(c)=(\operatorname{IM}_4)^*E_i^{\rm an}(-c).
\]
Thus $c$ is the geometric central twist, and applying
$\operatorname{IM}_4^*$ gives the analytic twist $-c$.

Define
\begin{align*}
 \mathfrak n&=\Delta_1+\Delta_2+\Delta_3+\Delta_4
 =[0,3]+[1,4]+[2,5]+[3,6],\\
 \mathfrak m&=[0,6]+[1,4]+[2,5]+[3,3].
\end{align*}
Write $\operatorname{Ind}_{\rm g}$ for induction in the geometric costalk
convention, using the opposite parabolic fixed in
Section~\ref{sec:fixed-slice-comparison}.  The geometric standard module
attached to $\mathfrak n$ is
\begin{equation}\label{eq:appendix-standard-module}
 X_{\rm g}(\mathfrak n)
 =\operatorname{Ind}_{\rm g}
   (E_1^{\rm g}\boxtimes E_2^{\rm g}\boxtimes
    E_3^{\rm g}\boxtimes E_4^{\rm g}).
\end{equation}
Its unique irreducible quotient is $L_{\rm g}(\mathfrak n)$.  Put
\[
 X_{\rm an}(\mathfrak n)=\operatorname{IM}^*X_{\rm g}(\mathfrak n),
 \qquad
 L_{\rm an}(\mathfrak n)=\operatorname{IM}^*L_{\rm g}(\mathfrak n).
\]
By Lemma~\ref{lem:analytic-geometric-transport}, this is the analytic
Langlands standard module
\[
 X_{\rm an}(\mathfrak n)
 \simeq\operatorname{Ind}
 \bigl(E_4^{\rm an}\boxtimes E_3^{\rm an}\boxtimes
       E_2^{\rm an}\boxtimes E_1^{\rm an}\bigr),
\]
whose segment centres are in decreasing order.  Its unique irreducible
quotient is $L_{\rm an}(\mathfrak n)$.  Let
$L_{\rm g}(\mathfrak m)$ be the geometric simple module labelled by the
second multisegment, and put
$L_{\rm an}(\mathfrak m)=\operatorname{IM}^*L_{\rm g}(\mathfrak m)$.

Let
\[
 \mathcal R=\mathbb C[[c_1,c_2,c_3,c_4]],\qquad
 \alpha_1=c_3-c_4,\quad \alpha_2=c_2-c_3,\quad
 \alpha_3=c_1-c_2.
\]
Over $\mathcal R$, form the geometric standard and opposite families; the
Hecke algebras and inducing modules in this display are scalar-extended to
$\mathcal R$:
\begin{align*}
 \mathcal X^+_{{\rm g},\mathbf c}
  &=\operatorname{Ind}_{\rm g}
    \bigl(E_1^{\rm g}(c_1)\boxtimes E_2^{\rm g}(c_2)\boxtimes
          E_3^{\rm g}(c_3)\boxtimes E_4^{\rm g}(c_4)\bigr),\\
 \mathcal X^-_{{\rm g},\mathbf c}
  &=\operatorname{Ind}_{\rm g}
    \bigl(E_4^{\rm g}(c_4)\boxtimes E_3^{\rm g}(c_3)\boxtimes
          E_2^{\rm g}(c_2)\boxtimes E_1^{\rm g}(c_1)\bigr).
\end{align*}
On each regular one-parameter line in this coefficient space, take
the primitive integral standard-to-opposite intertwiner.  We use these
DVR maps, not a choice of universal primitive matrix over $\mathcal R$.
Lemma~\ref{lem:analytic-geometric-transport} identifies the transform of
its source with the analytic standard family
\begin{equation}\label{eq:appendix-analytic-family}
 \operatorname{IM}^*\mathcal X^+_{{\rm g},\mathbf c}
 \simeq
 \operatorname{Ind}\bigl(
 E_4^{\rm an}(-c_4)\boxtimes E_3^{\rm an}(-c_3)\boxtimes
 E_2^{\rm an}(-c_2)\boxtimes E_1^{\rm an}(-c_1)\bigr).
\end{equation}
In particular, the analytic factors remain essentially discrete series with
sign symmetric-group action, and they occur in decreasing segment order at
$\mathbf c=0$.
The primitive integral maps along
\begin{align}
 \ell_{\rm dom}^*:(c_1,c_2,c_3,c_4)&\longmapsto(3t,2t,t,0),
 \label{eq:appendix-dominant-line}\\
 \ell_{\rm nd}^*:(c_1,c_2,c_3,c_4)&\longmapsto(2t,t,0,3t)
 \label{eq:appendix-nondominant-line}
\end{align}
are the two geometric Jantzen maps.  The source family specializes at
$t=0$ to $X_{\rm g}(\mathfrak n)$ along both lines; only the rates at which
its four inducing factors are translated differ.  Its Iwahori--Matsumoto
transform gives the corresponding analytic Jantzen maps along the
sign-reversed arcs.  In the geometric relative $A_3$ root coordinates the
two tangent directions are $(1,1,1)$ and $(-3,1,1)$.  In the decreasing
analytic factor order $(E_4^{\rm an},E_3^{\rm an},E_2^{\rm an},E_1^{\rm an})$,
formula~\eqref{eq:appendix-analytic-family} gives the directions
$(1,1,1)$ and $(-3,1,1)$, respectively.  Thus the first analytic direction
is dominant and the second is regular and non-dominant.


\subsection{The category \texorpdfstring{$\mathcal O$}{O} dictionary}

The Arakawa--Suzuki functor gives a representation-theoretic
interpretation of the two orbit labels and their deformations.
The actual local intersection map will be identified independently by
the explicit flag model below.
Work in $\mathcal O(\mathfrak{gl}_4)$, let
$V=\mathbb C^4$, and write $\rho$ for the half-sum of the positive roots.
Choose weights $\Lambda$ and $\mu_0$ by
\begin{equation}\label{eq:appendix-AS-weights}
 \Lambda+\rho=(7,6,5,4),
 \qquad
 \mu_0+\rho=(3,2,1,0).
\end{equation}
Then $\Lambda-\mu_0=(4,4,4,4)$, so
\[
 \sum_{i=1}^4(\Lambda_i-\mu_{0,i})=16.
\]
The choice $4$ makes all segments below nonempty.  More generally,
$\Lambda-\mu_0=(k,k,k,k)$ gives tensor degree $4k$ and
$\Lambda-\mu_w=(k+3,k,k,k-3)$.  The tensor-weight condition already
holds for $k=3$, when the final segment has length zero; we use $k=4$
to avoid that additional convention.

After restriction to $\mathfrak{sl}_4$, both $\Lambda$ and $\mu_0$ are the
zero weight.  Thus the category $\mathcal O$ calculation lies in the regular
integral principal block. 
 For a weight $\nu$, let $M(\nu)$ denote the Verma module of highest weight
$\nu$, and let $L(\nu)$ denote its simple quotient.  The Arakawa--Suzuki
functor attached to $\Lambda$ is
\[
 F_\Lambda(X)=
 \operatorname{Hom}_{\mathfrak{gl}_4}
 \bigl(M(\Lambda),X\otimes V^{\otimes16}\bigr),
\]
with the $\Hh(\mathrm{GL}_{16})$-action defined by the permutation and tensor
Casimir operators.  Since $\Lambda+\rho$ is dominant, this is an exact
functor.  On a Verma module it satisfies
\begin{equation}\label{eq:appendix-AS-standard}
 F_\Lambda(M(\mu))\simeq M(\Lambda,\mu),
\end{equation}
where $M(\Lambda,\mu)$ is the corresponding degenerate affine Hecke
standard module.  Moreover, because $\Lambda+\rho$ is regular, the
nonvanishing condition for the simple objects in the present regular block
implies
\begin{equation}\label{eq:appendix-AS-simple}
 F_\Lambda(L(\mu))\simeq L(\Lambda,\mu),
\end{equation}
the unique irreducible quotient of $M(\Lambda,\mu)$; see
\cite[Sections~2.2 and~3.1--3.2]{Suzuki} and the original construction
\cite{ArakawaSuzuki}.

The segment attached to the $i$-th coordinates of $(\Lambda,\mu)$ is
\begin{equation}\label{eq:appendix-AS-segment}
 \Delta_i(\Lambda,\mu)
 =[(\mu+\rho)_i,(\Lambda+\rho)_i-1].
\end{equation}
Its length is $\Lambda_i-\mu_i$.  For the pair in
\eqref{eq:appendix-AS-weights}, the four segments, in the
Arakawa--Suzuki row order, are
\[
 [3,6],\quad[2,5],\quad[1,4],\quad[0,3].
\]
Consider the multisegment:
\[
 \mathfrak n=[0,3]+[1,4]+[2,5]+[3,6].
\]

Now let $w=4231\in S_4$ and put $\mu_w=w\circ\mu_0$, where
$w\circ\mu_0=w(\mu_0+\rho)-\rho$.  Then
\[
 \mu_w+\rho=(0,2,1,3),
 \qquad
 \Lambda-\mu_w=(7,4,4,1).
\]
Formula~\eqref{eq:appendix-AS-segment} gives
\[
 [0,6],\quad[2,5],\quad[1,4],\quad[3,3],
\]
and the second multisegment defined above:
\[
 \mathfrak m=[0,6]+[1,4]+[2,5]+[3,3].
\]
Both shifted weights in \eqref{eq:appendix-AS-weights} are regular, so the
stabilizer corrections in the general Arakawa--Suzuki parametrization are
absent.  Consequently the permutation labels and the multisegment labels
match as follows:
\begin{equation}\label{eq:appendix-AS-label-dictionary}
 \begin{array}{c@{\quad\longmapsto\quad}c}
  M(\mu_0)\text{, labelled by }1234&X_{\rm g}(\mathfrak n),\\[2pt]
  L(\mu_w)\text{, labelled by }4231&L_{\rm g}(\mathfrak m).
 \end{array}
\end{equation}

Recall that  Suzuki's one-dimensional segment
module has trivial symmetric-group action and polynomial weights
$a,a+1,\ldots,b$.  His cross relation for the graded Hecke algebra is identified with the one used in
this paper by negating the polynomial generators.  The segment module then
has trivial symmetric-group action and weights
$-a,-a-1,\ldots,-b$, which is precisely the geometric
anti-discrete-series convention defined above; the quiver vertex $k$ is
labelled by
the weight $-k$.  Moreover, $\operatorname{Ind}_{\rm g}$ uses the opposite
parabolic.  Thus its displayed factor order is the reverse of the
Arakawa--Suzuki row order.  With these two convention changes,
\eqref{eq:appendix-AS-standard} and
\eqref{eq:appendix-AS-simple} give exactly
\eqref{eq:appendix-AS-label-dictionary}, not merely modules related by an
additional sign twist.

The functor also transports the contravariant form and its divisibility
filtration.  In Suzuki's integral construction one deforms
\[
 \Lambda(t)=\Lambda+t\delta,
 \qquad
 \mu(t)=\mu+t\delta.
\]
The segment lengths remain fixed, while the $i$-th segment is translated by
$t\delta_i$.  In the geometric coordinates defined above, the
Arakawa--Suzuki row order is $E_4,E_3,E_2,E_1$ and the polynomial-generator
sign change gives
\begin{equation}\label{eq:appendix-AS-direction-dictionary}
 \delta=(-c_4,-c_3,-c_2,-c_1).
\end{equation}
Thus the two geometric lines defined above correspond in category
$\mathcal O$ to
\begin{align*}
 (c_1,c_2,c_3,c_4)&=(3,2,1,0)t
 &\Longleftrightarrow\quad
 \delta_{\rm dom}&=(0,-1,-2,-3),\\
 (c_1,c_2,c_3,c_4)&=(2,1,0,3)t
 &\Longleftrightarrow\quad
 \delta_{\rm nd}&=(-3,0,-1,-2).
\end{align*}
Their simple-root coordinates are $(1,1,1)$ and $(-3,1,1)$,
respectively.  The first is dominant, up to a central translation, and the
second is regular and non-dominant.

Suzuki's directionwise comparison theorem
\cite[Theorem~4.3.5]{Suzuki} now gives, for either fixed direction,
\begin{equation}\label{eq:appendix-AS-Jantzen}
 [\operatorname{gr}_J^jM(\mu_0):L(\mu_w)]
 =
 [\operatorname{gr}_J^jX_{\rm g}(\mathfrak n):
    L_{\rm g}(\mathfrak m)].
\end{equation}
Applying the Iwahori--Matsumoto equivalence described above replaces the
geometric objects by $X_{\rm an}(\mathfrak n)$ and
$L_{\rm an}(\mathfrak m)$ without changing $t$ or the Smith exponents.
The rank-two calculation below will therefore also describe the
two-dimensional composition-multiplicity space for this category
$\mathcal O$ pair.  We do not infer the identification of the local
matrix from a coincidence of Kazhdan--Lusztig polynomials: the next
subsection identifies the equivariant normal slice explicitly.

Only the directionwise identity \eqref{eq:appendix-AS-Jantzen} is used
in this dictionary.  The later direction-independence assertion in
\cite[Proposition~5.3.2]{Suzuki} is not an input.

\subsection{The flag slice and the two integral maps}

Write the backward-quiver maps as $a_j:V_j\to V_{j-1}$, with
\[
 (\dim V_0,\ldots,\dim V_6)=(1,2,3,4,3,2,1).
\]
Both $\mathfrak n$ and $\mathfrak m$ belong to the open set on which
$a_1,a_2,a_3$ are surjective and $a_4,a_5,a_6$ are injective.
Put $E=V_3$.  Such a representation determines two complete flags:
\[
 \begin{split}
 F_1&=\ker a_3,\quad
 F_2=\ker(a_2a_3),\quad
 F_3=\ker(a_1a_2a_3),\\
 F'_1&=\operatorname{im}(a_4a_5a_6),\quad
 F'_2=\operatorname{im}(a_4a_5),\quad
 F'_3=\operatorname{im}a_4.
 \end{split}
\]
Conversely, these flags recover the representation up to choices of
isomorphisms
\[
 V_2\simeq E/F_1,\quad V_1\simeq E/F_2,\quad V_0\simeq E/F_3,
 \qquad
 V_6\simeq F'_1,\quad V_5\simeq F'_2,\quad V_4\simeq F'_3.
\]
Thus the open quiver variety is a principal bundle over
$\mathrm{Fl}(E)\times\mathrm{Fl}(E)$ for
$\prod_{j\ne3}\mathrm{GL}(V_j)$.  These bundle directions lie in the
base-change orbits and do not occur in a normal slice.

At $\mathfrak n$, let $e_i$ be the central-vertex vector belonging to
$\Delta_i$.  Both flags are
\[
 \langle e_4\rangle
 \subset\langle e_4,e_3\rangle
 \subset\langle e_4,e_3,e_2\rangle.
\]
At $\mathfrak m$, index the four central vectors by the segment starts
$0,1,2,3$, again calling them $e_1,e_2,e_3,e_4$.
The first flag is unchanged and the second is
\[
 \langle e_1\rangle
 \subset\langle e_1,e_3\rangle
 \subset\langle e_1,e_3,e_2\rangle.
\]
Relative to the ordered basis $(e_4,e_3,e_2,e_1)$ its permutation is
$4231$.  Fixing the first flag therefore identifies the normal
singularity of $\overline{\mathcal O_{\mathfrak m}}$ along
$\mathcal O_{\mathfrak n}$ with the identity chart in the Schubert
variety $X_{4231}\subset\mathrm{Fl}_4$.
This is an explicit slice construction, not a cancellation inferred
only from orbit dimensions or ordinary multiplicities.

The stabilizer torus scales the $e_i$.  Its coefficient arc is
$\lambda_i\mapsto c_i$.  The lower-triangular chart in the ordered
basis $(e_4,e_3,e_2,e_1)$ has weights
$c_i-c_j$ for $i<j$, and its consecutive weights are precisely
\[
 \alpha_1=c_3-c_4,\qquad
 \alpha_2=c_2-c_3,\qquad
 \alpha_3=c_1-c_2.
\]
The torus action and its two coefficient lines are therefore determined
by the flag construction.  In particular $\ell_{\rm dom}$ contracts
this normal chart.

Williamson's local intersection calculation
\cite[Section~8.3]{WilliamsonLocal}, in homogeneous integral bases, is
\begin{equation}\label{eq:appendix-matrix}
 B_{4231}=\begin{pmatrix}
 \alpha_1\alpha_2\alpha_3^2(\alpha_1+\alpha_2)&
 \alpha_1\alpha_2\alpha_3(\alpha_1+\alpha_2)\\
 \alpha_1\alpha_2\alpha_3(\alpha_1+\alpha_2)&
 -\alpha_1\alpha_3(\alpha_1+2\alpha_2+\alpha_3)
 \end{pmatrix}.
\end{equation}
If positive roots are taken as the cotangent rather than the tangent
characters, all three $\alpha_i$ change sign.  This replaces $B$ by
$-DBD$, where $D=\operatorname{diag}(1,-1)$, and so makes no difference
to integral equivalence or Smith exponents.  Its determinant is
\begin{equation}\label{eq:appendix-det}
 \det B_{4231}=-\alpha_1^2\alpha_2\alpha_3^2
 (\alpha_1+\alpha_2)(\alpha_2+\alpha_3)
 (\alpha_1+\alpha_2+\alpha_3).
\end{equation}

Here a \emph{multiplicity map} can be defined without assuming
semisimplicity of the non-dominant layers.  Choose a primitive idempotent
in the matrix factor $\operatorname{End}(V_{\mathfrak m})$ of
$(\mathcal E_\chi^{\rm res})_0$ and lift it using the IC decomposition
of the equivariant Springer complex.  Applying it to both normal
lattices and their map isolates the rank-two IC coefficient map.
On the special fibre this idempotent has rank one on
$L_{\rm g}(\mathfrak m)$ and rank zero on the other simple modules.
Since its image functor is exact, its dimensions on any layer compute
that layer's composition multiplicity of $L_{\rm g}(\mathfrak m)$.

\begin{lemma}[Directionwise comparison]\label{lem:appendix-transport}
On each of the two lines
\eqref{eq:appendix-dominant-line}--\eqref{eq:appendix-nondominant-line},
the $L_{\rm g}(\mathfrak m)$ multiplicity map of the primitive
Jantzen intertwiner is integrally equivalent over $\mathbb C[[t]]$
to the corresponding specialization of \eqref{eq:appendix-matrix}.
\end{lemma}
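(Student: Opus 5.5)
We will identify the primitive Jantzen intertwiner along each line with the normal costalk-to-stalk map, apply the $L_{\rm g}(\mathfrak m)$-idempotent to it, and recognise the resulting rank-two block as Williamson's local intersection matrix on the Schubert slice.

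\emph{Step 1: the primitive intertwiner is the normal map, on each line.} Proposition~\ref{prop:general-coefficient-arc} applies to the principal block of $\Hh(\mathrm{GL}_{16})$ with $Q=\mathrm{GL}_4^4$. The tensor splitting of Lemma~\ref{lem:levi-coefficient-splitting} holds, and purity was checked in Step~3 of the proof of Theorem~\ref{thm:equal-parameter-jantzen}. Parts~(1) and~(2) of that proposition use only relative regularity. Indeed, Step~5a needs only $\operatorname{Stab}_W(\eta)=W_Q$, and Step~5b is a content comparison. Dominance enters only in part~(3) and in Theorem~\ref{thm:general-local}. Both $(1,1,1)$ and $(-3,1,1)$ are relatively regular, since all the differences $c_i-c_j$ are nonzero on both lines. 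Hence along either line, $\Phi_!$ and $\Phi_*$ transport the primitive algebraic intertwiner to the completed normal map $c^\eta$ of \eqref{eq:normal-arc-lattices}, up to a unit of $\A$. Lemma~\ref{lem:normal-purity-base-change} gives freeness over the full stabilizer coefficient ring independently of the arc. So on either line the Jantzen map is, up to units, the pullback along $\psi_{\eta}$ of a single universal map over $\widehat R_x$.

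\emph{Step 2: isolate the $\mathfrak m$-coefficient.} The idempotent $e_{\mathfrak m}$ lifts to a degree-zero element of $\mathcal B$, so it commutes with $c^\eta$ by the naturality square in Proposition~\ref{prop:ext-equivariance}. Applying it to both lattices is exact and $\A$-linear. Its image is the normal costalk-to-stalk map of the summand $\IC_{\mathfrak m}$ along $\mathcal O_{\mathfrak n}$, tensored with the multiplicity space $V_{\mathfrak m}$. Via \eqref{eq:normal-slice-arrow} and Lemma~\ref{lem:general-clean-slodowy}, this is the vertex map $k^!I\to k^*I$ of the restricted IC object on a transverse slice. By the flag construction, the slice is torus-equivariantly the identity chart of $X_{4231}$, with the torus arc $\lambda_i\mapsto c_i$. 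The rank two is read off from $P_{1234,4231}=1+q$: the costalk and stalk are each free of rank two over the coefficient ring.

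\emph{Step 3: identify with Williamson's matrix.} Over the stabilizer torus coefficient ring $\mathbb C[\alpha_1,\alpha_2,\alpha_3]$, the map $k^!\IC(X_{4231})\to k^*\IC(X_{4231})$ at the identity point is the local intersection form. In homogeneous integral bases this is exactly \cite[Section~8.3]{WilliamsonLocal}, namely \eqref{eq:appendix-matrix}. The sign ambiguity between tangent and cotangent weights is harmless, as already noted. Base change to each line is an ordinary tensor product, by Lemma~\ref{lem:normal-purity-base-change}. Composing the change of bases with the units from Step~1 then gives the integral equivalence over $\A$.

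\emph{Main obstacle.} The delicate point is the equivariant identification of the quiver normal slice with the Schubert chart. The torus weights and their coordinates must match Williamson's conventions exactly, so that his matrix, written in his bases, is the matrix of our map over the full multivariable ring. Only then is specialisation to the non-dominant line legitimate. I would check this first on the rank-one consecutive weights, then confirm it against the determinant \eqref{eq:appendix-det} via localization: up to units, $\det c$ is the product of Euler classes of the moving fixed data. A secondary check is that the relative-regularity proof of Proposition~\ref{prop:general-coefficient-arc}(2) truly never uses contraction.
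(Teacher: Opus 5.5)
Your proposal is correct and takes essentially the paper's route. The ingredients are the same: the flag/Schubert-chart identification of the normal $\IC_{\mathfrak m}$ map via \eqref{eq:normal-slice-arrow}, ordinary base change to each line by vertex freeness, and Proposition~\ref{prop:general-coefficient-arc}(2) applied to the full primitive intertwiner before the $\mathfrak m$-component is taken. That proposition uses only relative regularity, so it covers the non-dominant line as well.

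One small slip: with a primitive idempotent the image is a single copy of the $\IC_{\mathfrak m}$ normal map, not its tensor product with $V_{\mathfrak m}$. A single copy is what makes the multiplicity map rank two, as you correctly state afterwards.
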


\begin{proof}
The flag construction identifies the equivariant normal IC map.
Indeed, pullback along the displayed principal bundle and variation
of the first flag add only orbit directions.  Normal restriction
\eqref{eq:normal-slice-arrow} removes those directions by orbit
adjunction, without dividing any Euler class.  It leaves the vertex
map on the Schubert chart, with the torus characters just computed.

Now restrict to either coefficient line.  The normal IC summand is
pure; vertex freeness from Lemma
\ref{lem:normal-purity-base-change} makes this an ordinary coefficient
base change.  All six root values are nonzero on either line, so the
normal map is generically invertible, also on the non-dominant line.
Proposition~\ref{prop:general-coefficient-arc}(2) compares the full
primitive normal map with the algebraic intertwiner by a DVR unit.
Taking the $\mathfrak m$ multiplicity component gives the assertion.

Primitivity is imposed on the full intertwiner, not separately on
this rank-two component.  Its positive Smith exponents must not be
removed by dividing the component by a power of $t$.  No universal
primitive-matrix equivalence over
$\mathbb C[[c_1,c_2,c_3,c_4]]$ is asserted.
\end{proof}

\begin{lemma}[Smith exponents on the two lines]\label{lem:appendix-smith}
Over $\C[[t]]$, the Smith exponents of
$B_{4231}$ are $(3,5)$ along $\ell_{\rm dom}$ and $(4,4)$ along
$\ell_{\rm nd}$.
\end{lemma}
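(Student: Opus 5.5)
The plan is to use the elementary fact that for a $2\times2$ matrix over the discrete valuation ring $\A$ the Smith exponents $a_1\le a_2$ are determined by two valuations. The smaller one, $a_1$, is the minimum $t$-adic valuation of the four entries, since this is the valuation of the first determinantal divisor. The sum $a_1+a_2$ is the valuation of the determinant. This is the two-variable case of Lemma~\ref{lem:ed}: the Smith normal form over $\A$ has $d_1=\gcd$ of the entries and $d_1d_2=\det$ up to units. No basis change beyond this is needed. Since Lemma~\ref{lem:appendix-transport} already identifies the multiplicity map with the specialization of \eqref{eq:appendix-matrix} up to integral equivalence, it suffices to specialize $\alpha_1,\alpha_2,\alpha_3$ along each line and read off these valuations.

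Along $\ell_{\rm dom}$, given by \eqref{eq:appendix-dominant-line}, one has $\alpha_1=\alpha_2=\alpha_3=t$. The entries then become
\[
 B_{11}=2t^5,\qquad B_{12}=B_{21}=2t^4,\qquad B_{22}=-4t^3 .
\]
The minimal valuation is $3$. In \eqref{eq:appendix-det} every one of the eight linear factors $\alpha_1,\alpha_1,\alpha_2,\alpha_3,\alpha_3,\alpha_1+\alpha_2,\alpha_2+\alpha_3,\alpha_1+\alpha_2+\alpha_3$ specializes to a nonzero multiple of $t$. Hence $v_t(\det)=8$, and the exponents are $(3,5)$.

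Along $\ell_{\rm nd}$, given by \eqref{eq:appendix-nondominant-line}, one has $\alpha_1=-3t$, $\alpha_2=t$ and $\alpha_3=t$. Then $\alpha_1+\alpha_2=-2t$, and the key point is that
\[
 \alpha_1+2\alpha_2+\alpha_3=0 .
\]
So $B_{22}$ vanishes identically on this line, while $B_{11}=6t^5$ and $B_{12}=B_{21}=6t^4$. The minimal valuation is therefore $4$. The determinant is $-B_{12}^2=-36t^8$, which agrees with \eqref{eq:appendix-det}: there $\alpha_2+\alpha_3=2t$ and $\alpha_1+\alpha_2+\alpha_3=-t$ are nonzero, so the valuation is again $8$. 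This gives exponents $(4,4)$.

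The only delicate step is the cancellation in $B_{22}$ along the non-dominant line. This cancellation is exactly what shifts the smaller exponent from $3$ to $4$ while leaving the determinant valuation unchanged. I would double-check the sign convention for the $\alpha_i$, but the remark after \eqref{eq:appendix-matrix} shows that reversing all signs replaces $B$ by $-DBD$. That operation changes no entry valuation, so the computation is convention-independent.
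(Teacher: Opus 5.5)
Your proposal is correct and matches the paper's proof. You specialize the entries along each line to $\begin{pmatrix}2t^5&2t^4\\2t^4&-4t^3\end{pmatrix}$ and $\begin{pmatrix}6t^5&6t^4\\6t^4&0\end{pmatrix}$, take $a_1$ as the minimal entry valuation and $a_1+a_2=v_t(\det)=8$, and your cross-check against the factored determinant and the sign-convention remark are consistent with it.
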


\begin{proof}
On the two lines the matrices are, respectively,
\[
 \begin{pmatrix}2t^5&2t^4\\2t^4&-4t^3\end{pmatrix},
 \qquad
 \begin{pmatrix}6t^5&6t^4\\6t^4&0\end{pmatrix}.
\]
Let $a_1\leq a_2$ be the Smith exponents.  For a matrix
$C=(C_{ij})\in M_2(\mathbb C[[t]])$,
\[
 a_1=\min_{i,j}v_t(C_{ij}),\qquad
 a_1+a_2=v_t(\det C).
\]
Along $\ell_{\rm dom}$ these two quantities are $3$ and $8$, so
$(a_1,a_2)=(3,5)$.  Along $\ell_{\rm nd}$ they are $4$ and $8$, so
$(a_1,a_2)=(4,4)$.  In the order
\[
 \alpha_1,\ \alpha_2,\ \alpha_3,\ \alpha_1+\alpha_2,\
 \alpha_2+\alpha_3,\ \alpha_1+\alpha_2+\alpha_3,
\]
the coefficients of $t$ in the six positive-root values along
$\ell_{\rm nd}$ are $-3,1,1,-2,2,-1$.  They are all nonzero, so this line is
regular.
\end{proof}

\begin{remark}[Consistency with the IC prediction]
The base-change group has dimension $44$, while
$\dim\operatorname{End}(\mathfrak n)=10$ and
$\dim\operatorname{End}(\mathfrak m)=5$.  Hence
\[
 \dim\mathcal O_{\mathfrak n}=34,\qquad
 \dim\mathcal O_{\mathfrak m}=39.
\]
Williamson's calculation gives
\[
 \widehat P_{\mathfrak n,\mathfrak m}(u)=1+u^2,
\]
so $[X_{\rm g}(\mathfrak n):L_{\rm g}(\mathfrak m)]=2$.  Since the
orbit-dimension difference is $5$, Theorem
\ref{thm:equal-parameter-jantzen}, together with Lemma
\ref{lem:analytic-geometric-transport}, places the two copies in dominant
Jantzen levels $5$ and $3$.  This agrees with
Lemma~\ref{lem:appendix-smith} and fixes the grading normalization.
\end{remark}

\begin{theorem}[Failure of direction independence]
\label{thm:direction-counterexample}
The Jantzen filtration of the analytic Langlands standard module
$X_{\rm an}(\mathfrak n)$ for $\Hh(\mathrm{GL}_{16})$ depends on the regular
real deformation direction, even up to an overall translation of filtration
indices.
\end{theorem}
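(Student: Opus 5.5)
The plan is to compare the two Smith filtrations through the one composition factor $L_{\rm g}(\mathfrak m)$, whose multiplicity in $X_{\rm g}(\mathfrak n)$ is two. Everything is computed in the geometric convention. Lemma~\ref{lem:analytic-geometric-transport} then transports the conclusion to $X_{\rm an}(\mathfrak n)$: the functor $\operatorname{IM}^*$ is exact and $\mathbb C[[t]]$-linear after the signed identification of coefficient rings, so it preserves Jantzen layers and composition multiplicities. Replacing $t$ by $-t$ changes no Smith exponent. By the discussion after \eqref{eq:appendix-analytic-family}, the two geometric lines $\ell_{\rm dom}$ and $\ell_{\rm nd}$ become analytic directions that are regular relative to the Langlands Levi. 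The first is dominant and the second is not.

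For each line, take the primitive integral Jantzen intertwiner and apply the lifted primitive idempotent attached to $\operatorname{End}(V_{\mathfrak m})$, as described before Lemma~\ref{lem:appendix-transport}. This idempotent has rank one on $L_{\rm g}(\mathfrak m)$ and rank zero on the other simples, and its image functor is exact. Hence for each $j$ the number $[\operatorname{gr}_J^jX_{\rm g}(\mathfrak n):L_{\rm g}(\mathfrak m)]$ equals the dimension of the $j$-th layer of the Smith filtration of the rank-two component map. One point needs checking: applying the idempotent commutes with forming the inverse images $I_t^{-1}(t^n\cdot)$ and with reduction modulo $t$. This holds because the idempotent commutes with $I_t$ and with $t$, by the argument used in Proposition~\ref{prop:component-descent}. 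Lemma~\ref{lem:ed} then expresses these layer dimensions through the Smith exponents of that rank-two map. Primitivity is imposed on the full map, so no power of $t$ is removed from the component.

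Lemma~\ref{lem:appendix-transport} identifies the component map on each line, up to integral equivalence, with the specialization of $B_{4231}$. Lemma~\ref{lem:appendix-smith} gives Smith exponents $(3,5)$ along $\ell_{\rm dom}$ and $(4,4)$ along $\ell_{\rm nd}$. The $L_{\rm g}(\mathfrak m)$-multiplicity polynomial is therefore $v^3+v^5$ in the dominant direction and $2v^4$ in the non-dominant direction. This is consistent with the Remark, and with Theorem~\ref{thm:equal-parameter-jantzen} in the dominant case.

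It remains to rule out an overall translation of filtration indices. In both directions the Langlands quotient $L_{\rm g}(\mathfrak n)$ lies in layer $0$: the head summand is point-supported on the slice, so its normal map is the identity and has exponent zero. The same conclusion follows from primitivity, which puts a nonzero reduction in layer zero, and Lemma~\ref{lem:ed}, whose minimum exponent is $0$. A translation would therefore have to fix the layer of the head and so be trivial. Independently, $\{3,5\}$ and $\{4,4\}$ differ by a translation only if $3+s=4$ and $5+s=4$ hold together, which is impossible. The second argument alone suffices and is the one I would state, because it needs no normalization at all.

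I expect the main obstacle to be the identification in Lemma~\ref{lem:appendix-transport} on the non-dominant line, where the local theorem itself does not apply. The argument must rely only on Proposition~\ref{prop:general-coefficient-arc}(2), which requires only relative regularity, together with the purity and vertex freeness of Lemma~\ref{lem:normal-purity-base-change}. Both are stated independently of dominance, so I would cite them exactly as they are stated.
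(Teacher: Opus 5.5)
Your proposal is correct and is essentially the paper's proof: you isolate the $L_{\rm g}(\mathfrak m)$ multiplicity map with the lifted idempotent, identify it on both regular lines with the specialization of $B_{4231}$ via Lemma~\ref{lem:appendix-transport}, read off the Smith exponent multisets $\{3,5\}$ and $\{4,4\}$ from Lemma~\ref{lem:appendix-smith}, rule out a common translation, and transport the result by $\operatorname{IM}^*$. Your check that the idempotent commutes with the inverse images $I_t^{-1}(t^n\,\cdot)$ and your head-in-layer-zero normalization only make explicit what the paper leaves implicit, and your second translation argument is the paper's gap comparison ($2$ versus $0$).
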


\begin{proof}
Consider the two geometric specializations $\ell_{\rm dom}$ and
$\ell_{\rm nd}$ from
\eqref{eq:appendix-dominant-line}--\eqref{eq:appendix-nondominant-line}.
The first lies in the contracting chamber.  After the sign change of
Lemma~\ref{lem:analytic-geometric-transport}, it gives the dominant analytic
Langlands deformation.  For the second geometric line, the six positive-root
values have nonzero coefficients
$-3,1,1,-2,2,-1$, as recorded in the proof of
Lemma~\ref{lem:appendix-smith}.  In decreasing analytic factor order, the
corresponding coefficients are $-3,1,1,-2,2,-1$.  Thus the second analytic
line is regular and non-dominant.

The directionwise comparison in
Lemma~\ref{lem:appendix-transport} applies to both regular lines.  It therefore
identifies both specialized Jantzen blocks, up to integral changes of basis
and a unit, with the corresponding specialization of $B_{4231}$.  These
operations preserve Smith exponents.  Lemma
\ref{lem:analytic-geometric-transport} then transports both filtrations to
the analytic standard module, again without changing their Smith exponents.
Lemma~\ref{lem:appendix-smith} gives
multisets $\{3,5\}$ and $\{4,4\}$ for the two copies of
$L_{\rm an}(\mathfrak m)$.  The gap is $2$ in the first filtration and $0$ in the
second.  A common translation cannot change this gap, so the filtrations are
not equal up to translation.  Equivalently, along the dominant deformation
one copy of $L_{\rm an}(\mathfrak m)$ occurs in each of the third and fifth
Jantzen layers, whereas along the regular non-dominant deformation the
entire two-dimensional $L_{\rm an}(\mathfrak m)$-multiplicity space has
Smith exponent four.
The latter statement concerns the multiplicity space and does not assume
that the fourth non-dominant layer is semisimple.
\end{proof}

\renewcommand{\theequation}{B.\arabic{equation}}
\setcounter{equation}{0}
\section{The \texorpdfstring{$A_1$}{A1} model: one nonzero Jantzen layer}
\label{app:rank-one}

This appendix illustrates the general proof in the smallest nontrivial
principal block.  The calculation contains the primitive integral
intertwiner, the contracting fixed Springer slice, the Lefschetz
string--Smith exponent correspondence, and the residual-radical argument
for semisimplicity.  It is a consistency check, not a second type-$A$ proof.
The principal datum uses the trivial enhancement on both orbits.
For the group $\mathrm{SL}_2$ used below, the nonzero-orbit stabilizer
in the diagonal torus is $\{\pm I\}$, so its component group is
$\mu_2$, not the trivial group.  Its trivial character is the only one
occurring in this calculation.

\subsection{The algebraic family and its Smith form}

Let $R=\{\pm\alpha\}$, put
\[
 \xi=\frac{\alpha}{2},\qquad s=N_{s_\alpha},
\]
and normalize the parameter as in the Introduction.  Then
$s(\xi)=-\xi$ and $\langle\xi,\alpha^\vee\rangle=1$, so
\[
 \Hh(A_1,1)
 =\mathbb C\langle s,\xi\rangle/
   (s^2-1,\ \xi s+s\xi-1).
\]
The transpose anti-involution is $s^*=s$ and $\xi^*=\xi$.

For $\lambda\in\mathbb C$, let $\mathbb C_\lambda$ be the
$\mathbb C[\xi]$-module on which $\xi$ acts by $\lambda$, and put
\[
 X_\lambda=\Hh(A_1,1)\otimes_{\mathbb C[\xi]}\mathbb C_\lambda.
\]
With $v=1\otimes1$ and $w=s\otimes1$, the action is
\[
 sv=w,\qquad sw=v,\qquad
 \xi v=\lambda v,\qquad \xi w=v-\lambda w.
\]
Thus, in the ordered basis $(v,w)$,
\begin{equation}\label{eq:A1-action}
 [s]=\begin{pmatrix}0&1\\1&0\end{pmatrix},
 \qquad
 [\xi]=\begin{pmatrix}\lambda&1\\0&-\lambda\end{pmatrix}.
\end{equation}

Consider the dominant analytic arc
\[
 \lambda(t)=\frac12+t.
\]
The standard lattice of \eqref{eq:standard-lattice-definition} is, in this
case,
\[
 X_\A=
 (\A\otimes_{\mathbb C}\Hh(A_1,1))
 \otimes_{\A[\xi]}\A_{\lambda(t)},
\]
where $\xi$ acts on the rank-one module $\A_{\lambda(t)}$ by
$\lambda(t)$.  It is free over $\A$ with basis $(v,w)$.

The primitive standard-to-contragredient map is represented in $(v,w)$ and
the corresponding dual basis by
\begin{equation}\label{eq:A1-intertwiner}
 G_t=
 \begin{pmatrix}2\lambda(t)&1\\1&2\lambda(t)\end{pmatrix}
 =\begin{pmatrix}1+2t&1\\1&1+2t\end{pmatrix}.
\end{equation}
Indeed, if $S$ and $\Xi$ denote the two matrices in
\eqref{eq:A1-action}, then
\[
 G_tS=S^{\mathsf T}G_t,
 \qquad
 G_t\Xi=\Xi^{\mathsf T}G_t.
\]
These are precisely the two $\Hh$-linearity identities for the chosen
transpose anti-involution.  The generic Hom-space is one-dimensional, so
they determine the standard-to-contragredient map up to a scalar.  The
displayed integral normalization is primitive because its entries generate
the unit ideal of $\A$, while
\[
 \det G_t=4t(1+t),
\]
so it becomes an isomorphism over $\K$ but not over $\A$.

\begin{samepage}
Put
\[
 P=\begin{pmatrix}1&1\\1&-1\end{pmatrix}\in\operatorname{GL}_2(\A).
\]
Since $2$ is a unit in $\A$, the following independent source and target
basis changes give a Smith equivalence:
\[
 P^{-1}G_tP
 =\begin{pmatrix}2+2t&0\\0&2t\end{pmatrix}.
\]
After multiplying the two target basis vectors by units, the Smith normal
form is therefore
\begin{equation}\label{eq:A1-smith}
 \operatorname{diag}(1,t).
\end{equation}
In particular, the two Smith exponents are $0$ and $1$.
\end{samepage}

Set $v_+=v+w$ and $v_-=v-w$.  At $t=0$, the line
$\mathbb Cv_-$ is a submodule, because
\[
 sv_-=-v_-,\qquad \xi v_-=-\frac12v_-.
\]
For $\varepsilon\in\{+1,-1\}$, let $L_\varepsilon$ be the one-dimensional
module on which
\[
 s\longmapsto\varepsilon,
 \qquad
 \xi\longmapsto\frac{\varepsilon}{2}.
\]
The quotient of $X_{1/2}$ by $\mathbb Cv_-$ is $L_+$, and one obtains the
nonsplit exact sequence
\[
 0\longrightarrow L_-
 \longrightarrow X_{1/2}
 \longrightarrow L_+\longrightarrow0.
\]
The definition \eqref{eq:jantzen} and \eqref{eq:A1-smith} give
\begin{equation}\label{eq:A1-layers}
 J^0X_{1/2}=X_{1/2},\qquad
 J^1X_{1/2}=\mathbb Cv_-,\qquad J^2X_{1/2}=0,
\end{equation}
and hence
\[
 \operatorname{gr}_J^0X_{1/2}\simeq L_+,
 \qquad
 \operatorname{gr}_J^1X_{1/2}\simeq L_-.
\]

\subsection{The fixed Springer line and the Lefschetz string}

The same exponents arise geometrically.  Take
$G=\operatorname{SL}_2(\mathbb C)$, let $(e,h,f)$ be the standard
$\mathfrak{sl}_2$-triple with $h=\alpha^\vee$, and keep $r=\tfrac12$.
The analytic central point and dominant direction used above are
\[
 \sigma_{\rm an}=\frac12h,\qquad \eta_{\rm an}=h.
\]
Under the analytic/geometric convention of Section~\ref{sec:fixed-slice-comparison}, the
corresponding geometric data are
\[
 \sigma_{\rm g}=-\frac12h,\qquad \eta_{\rm g}=-h.
\]
The parameter $t$ is unchanged.  Since $2r=1$, the fixed graded piece is
\[
 \mathfrak g_{\sigma_{\rm g},1}=\mathbb Cf\simeq\mathbb A^1,
 \qquad G_{\sigma_{\rm g}}=T.
\]
The cocharacter $\eta_{\rm g}$ acts on $f$ with weight $2$, and hence
contracts this line to the origin.

Let $B^+$ and $B^-$ be the two $T$-fixed Borel subgroups, with
$f\in\operatorname{Lie}B^-$.  The fixed principal incidence variety over
$\mathbb Cf$ is the disjoint union
\[
 \widetilde X_{\sigma_{\rm g},r}
 =\bigl(\mathbb A^1\times\{B^-\}\bigr)
  \sqcup\bigl(\{0\}\times\{B^+\}\bigr).
\]
The first component maps isomorphically to the line, while the second maps
to its origin.  Consequently, with the componentwise perverse shifts and
the fixed Tate normalizations used in the paper,
\begin{equation}\label{eq:A1-Springer-complex}
 K_{\sigma_{\rm g},r}
 \simeq \mathbb C_{\{0\}}\oplus\mathbb C_{\mathbb A^1}[1].
\end{equation}

Let $i:\{0\}\hookrightarrow\mathbb A^1$.  On the point-supported summand
of \eqref{eq:A1-Springer-complex}, the canonical map $i^!\to i^*$ is the
identity.  On the smooth-line summand it is the equivariant Gysin map, hence
multiplication by the Euler class of the normal line.  If
$z\in H^2_{A_{\eta_{\rm g}}}(\mathrm{pt})$ is the positive generator for
the contracting torus, there are homogeneous generators, ordered as in
\eqref{eq:A1-Springer-complex}, in which
the full canonical map is
\begin{equation}\label{eq:A1-geometric-map}
 c=\operatorname{diag}(1,z).
\end{equation}
Its torsion cokernel is $\mathbb C[z]/(z)$.  Equivalently, the quotient of
the punctured line by the effective torus is a point, and Lemma
\ref{lem:cone-calculation} identifies this cokernel, with its torsor shift,
with the cohomology of that point.  Hard Lefschetz therefore supplies one
primitive string of length $1$.  (Before effectivizing, the quotient stack
is $B\mu_2$; exact coarse pushforward gives the same calculation.)

Along the formal arc, Proposition~\ref{prop:general-coefficient-arc} gives
\[
 z\longmapsto c_\eta t,
 \qquad c_\eta>0.
\]
For the original coroot cocharacter the normal weight is $2$, so one may
take $c_\eta=2$.  Passing to the effective quotient merely rescales the
positive generator; in either convention $c_\eta>0$.  Thus, in the coroot
coordinate, \eqref{eq:A1-geometric-map} pulls back to
\[
 c^\eta\sim\operatorname{diag}(1,2t),
\]
where $\sim$ denotes equivalence under invertible source and target
$\A$-basis changes.  This map has the same Smith exponents $(0,1)$ as
\eqref{eq:A1-intertwiner}.

\subsection{The integral comparison and the residual radical}

Write $I_{\rm alg}$ for the map represented by
\eqref{eq:A1-intertwiner}, and $I_{\rm geo}$ for the pullback of
\eqref{eq:A1-geometric-map}.  After the fixed comparison isomorphisms
$\Phi_!$ and $\Phi_*$, their generic fibres are proportional.  Both maps
are primitive, so their contents are $\A$.  If
$I_{\rm geo}=aI_{\rm alg}$ with $a\in\K^\times$, then
\[
 \A=\operatorname{cont}(I_{\rm geo})
 =a\operatorname{cont}(I_{\rm alg})=a\A.
\]
Hence $a\in\A^\times$, exactly as in Proposition
\ref{prop:general-coefficient-arc}.  The two raw matrices need not be scalar
multiples: they also record the invertible basis changes $\Phi_!$ and
$\Phi_*$.

Second, semisimplicity is a statement about the residual special fibre, not
only about the Smith normal form.  The one-variable algebra acting before
reduction is
\[
 \mathcal B_\eta=
 \frac{\A\langle s,\xi\rangle}
 {s^2-1,\ \xi s+s\xi-1,\
  \xi^2-(\frac12+t)^2}.
\]
It is free of rank $4$ over $\A$ and specializes to the exact central fibre
\[
 \mathcal E=
 \Hh(A_1,1)/(\xi^2-\tfrac14).
\]
Put
\[
 n=\xi-\frac12s.
\]
In $\mathcal B_\eta$ one has $n^2=t(1+t)$, whereas in $\mathcal E$ one has
\[
 n^2=0,\qquad sn=-ns.
\]
Thus $\mathcal E$ has the residual nonnegative grading
\[
 \mathcal E_0=\operatorname{span}_{\mathbb C}\{1,s\}
   \simeq\mathbb Ce_+\oplus\mathbb Ce_-,
 \qquad
 \mathcal E_1=\mathbb Cn\oplus\mathbb Csn,
 \qquad \mathcal E_m=0\quad(m\geq2),
\]
where $e_\pm=(1\pm s)/2$.  Its positive-degree ideal is square-zero and
\begin{equation}\label{eq:A1-radical}
 \operatorname{rad}\mathcal E
 =\mathcal E_{>0}=\mathbb Cn\oplus\mathbb Csn.
\end{equation}
Under the Springer decomposition \eqref{eq:A1-Springer-complex}, the
idempotents $e_+$ and $e_-$ are the degree-zero projectors onto the two IC
summands, while $n$ and $sn$ are the two off-diagonal residual degree-one
Ext classes.  This is the residual grading of
Lemma~\ref{lem:residual-grading}; the original polynomial grading is not
preserved by specialization.

On the special fibre $X_{1/2}$ one computes
\[
 nv_+=v_-,\qquad nv_-=0.
\]
Consequently
\[
 (\operatorname{rad}\mathcal E)X_{1/2}
 =\mathbb Cv_-=J^1X_{1/2}.
\]
The radical sends $J^0$ into $J^1$ and kills $J^1$.  It therefore acts
trivially on each of the two quotients in
\eqref{eq:A1-layers}; each quotient is a module for the semisimple algebra
$\mathcal E_0$.  This is the rank-one form of the argument in Proposition
\ref{prop:ext-equivariance}.  Although $nv_+=v_-$ on the total graded
$\mathcal E$-module
$\operatorname{gr}_JX_{1/2}$, the degree-one element $n$ maps
$\operatorname{gr}_J^0X_{1/2}$ to $\operatorname{gr}_J^1X_{1/2}$, whereas
its induced endomorphism of either individual layer is zero.

\subsection{The local IC formula}

Finally, let $\mathcal O_0=\{0\}$ and
$\mathcal O_1=\mathbb C^\times f$, and put
$\gamma_i=(\mathcal O_i,\mathbf1)$.  Both enhancements in the principal datum are trivial, and
\[
 d_0=0,\qquad d_1=1,
 \qquad
 \IC_0=\mathbb C_{\{0\}},\qquad
 \IC_1=\mathbb C_{\mathbb A^1}[1].
\]
At the origin,
\[
 i^!\IC_0=\mathbb C,
 \qquad
 i^!\IC_1=\mathbb C[-1](-1),
 \qquad
 i^*\IC_1=\mathbb C[1].
\]
The first two costalks place the corresponding constituents in Smith layers
$0$ and $1$, respectively.  The stalk definition
\eqref{eq:full-ic-polynomial} gives
\[
 \widehat P_{00}(u)=1,
 \qquad
 \widehat P_{01}(u)=1.
\]
Under the principal analytic convention,
$X_{\gamma_0}=X_{1/2}$, $L_{\gamma_0}=L_+$, and
$L_{\gamma_1}=L_-$.  Abbreviate these modules by $X_0,L_0,L_1$.
Hence
Theorem~\ref{thm:equal-parameter-jantzen} reads
\[
 \sum_{n\geq0}[\operatorname{gr}_J^nX_0:L_0]v^n=1,
 \qquad
 \sum_{n\geq0}[\operatorname{gr}_J^nX_0:L_1]v^n
 =v^{d_1-d_0}\widehat P_{01}(v^{-1})=v.
\]
This is the algebraic filtration in
\eqref{eq:A1-layers}.  The example therefore verifies both the
orbit-dimension translation and the distinction between the exponent-zero
head block and the positive Lefschetz string.

\subsection{Why the map is taken normally to the orbit}

At a nonzero point of the Springer line the orbit is open and its
transverse slice is a point.  Normal restriction of its IC complex
therefore gives the identity map, hence Smith exponent zero.
The ordinary ambient point map would instead contain the Euler class
of the orbit-tangent line.  The point stabilizer acts trivially on
that line, so this Euler class is zero.  Thus even in rank one the
ambient point map cannot replace the normal map.

For a positive-dimensional normal slice, take
\[
 G=\mathrm{SL}_3,\quad r=\tfrac12,\quad
 \sigma=\operatorname{diag}(1,0,-1),\quad y=E_{12}.
\]
Then $X_\chi=\mathbb CE_{12}\oplus\mathbb CE_{23}$.
The triple with $f=E_{21}$ and $h=\operatorname{diag}(1,-1,0)$ has
slice $V=y+\mathbb CE_{23}$, and
\[
 \eta(a)=\operatorname{diag}(a,a,a^{-2})
\]
has weight zero on the orbit-tangent line $\mathbb CE_{12}$ and
weight $3$ on the slice direction.  For the full-support object
$\mathbb C_{X_\chi}[2]$, the ambient point map has Euler factor
$(0z)(3z)=0$.  The normal map has factor $3z$, giving Smith exponent
one after $z=t$.  This is precisely the distinction implemented by
\eqref{eq:normal-fibre-map}--\eqref{eq:normal-slice-arrow}.

\medskip
\noindent\textbf{Disclosure of computational assistance.}
The author used ChatGPT for proof exploration and for assistance with
drafting and checking references.  The author is responsible for verifying the
arguments and citations and for the contents of the paper.

In a LinkedIn post on 5 August 2026, the author shared an initial version of the paper along with the transcript of the AI interaction. That version had 22 pages, including an independent proof for type $A$. The current version of the paper has been substantially expanded, clarified and corrected, and does not include the separate quiver-variety proof for $GL_N$.


\end{document}